\theoremstyle{plain}
\newtheorem{thm}{Theorem}[section]
\newtheorem{lemma}[thm]{Lemma}
\newtheorem{prop}[thm]{Proposition}
\newtheorem{cor}[thm]{Corollary}
\theoremstyle{definition}
\newtheorem{defn}[thm]{Definition}
\theoremstyle{remark}
\newtheorem{remark}[thm]{Remark}
\newtheorem*{thank}{{\bf Acknowledgments}}
\newcommand{\nc}{\newcommand}
\def\makeop#1{\expandafter\def\csname#1\endcsname
  {\mathop{\rm #1}\nolimits}\ignorespaces}
\def\makebb#1{\expandafter\def
  \csname bb#1\endcsname{{\mathbb{#1}}}\ignorespaces}
\def\makebf#1{\expandafter\def\csname bf#1\endcsname{{\bf
      #1}}\ignorespaces} 
\def\makegr#1{\expandafter\def
  \csname gr#1\endcsname{{\mathfrak{#1}}}\ignorespaces}
\def\makescr#1{\expandafter\def
  \csname scr#1\endcsname{{\EuScript{#1}}}\ignorespaces}
\def\makecal#1{\expandafter\def\csname cal#1\endcsname{{\mathcal
      #1}}\ignorespaces} 
\def\doLetters#1{#1A #1B #1C #1D #1E #1F #1G #1H #1I #1J #1K #1L #1M
                 #1N #1O #1P #1Q #1R #1S #1T #1U #1V #1W #1X #1Y #1Z}
\def\doletters#1{#1a #1b #1c #1d #1e #1f #1g #1h #1i #1j #1k #1l #1m
                 #1n #1o #1p #1q #1r #1s #1t #1u #1v #1w #1x #1y #1z}
     \def\qed{\qedmark\medbreak}%
\def\qedmark{{\enspace\vrule height 6pt width 5pt depth 1.5pt}}%
\def\Fpbar{\overline{\bbF}_p}
\def\Fp{{\bbF}_p}
\newcommand{\Z}{\mathbb Z}
\newcommand{\Q}{\mathbb Q}
\newcommand{\F}{\mathbb F}
\newcommand{\<}{\langle}   
\renewcommand{\>}{\rangle} 
\nc{\embed}{\hookrightarrow}
\newcommand{\ch}{characteristic }
\nc{\ol}{\overline}
\nc{\wt}{\widetilde}
\nc{\opp}{\mathrm{opp}}
\begin{document}
\renewcommand{\thefootnote}{\fnsymbol{footnote}}
\setcounter{footnote}{-1}
\numberwithin{equation}{section}


\title{On Hermitian forms over dyadic non-maximal local orders}
\author{Chia-Fu Yu}
\address{
Institute of Mathematics, Academia Sinica and NCTS (Taipei Office)\\
6th Floor, Astronomy Mathematics Building \\
No. 1, Roosevelt Rd. Sec. 4 \\ 
Taipei, Taiwan, 10617} 
\email{chiafu@math.sinica.edu.tw}


\begin{abstract}
We reduce a study of polarized abelian varieties over finite fields to
the classification problem of skew-Hermitian modules 
over (possibly non-maximal) local orders. 
The main result of this paper gives a complete classification of these
skew-Hermitian modules in the case when the ground ring is a
dyadic non-maximal local order.
\end{abstract} 

\maketitle


\section{Introduction}
\label{sec:01}

Let $p$ be a rational prime number. Let
$R:=\Z_2[X]/(X^2+p)=\Z_2[\pi]$, an order of the $\Q_2$-algebra 
$E:=\Q_2[X]/(X^2+p)$, 
where $\Z_2$ is the ring of $2$-adic integers, and $\pi$ is the image of
$X$ in $R$. Denote by $a \mapsto \bar
a $ the non-trivial involution on $E$ and $O_E$ the ring of integers
in $E$. By a skew-Hermitian module over $R$
we mean a $\Z_2$-free finite $R$-module $M$ together with a $\Z_2$-valued
non-degenerate alternating pairing
\[ \psi: M\times M\to \Z_2 \]
such that $\psi(ax,y)=\psi(x,\bar a y)$ for all $a\in R$ and $x,y\in
M$. If $M$ is self-dual with respect to the pairing $\psi$, then it is
called a self-dual skew-Hermitian module. In the paper we study the
classification of self-dual skew-Hermitian modules over $R$. As an
elementary fact, the ring $R$ is the maximal order if and only if $p=2$
or $p\equiv 1\ (\!\mod 4)$. It is easier to handle the case where $R$ is
maximal; the classification is known even for any non-Archimedean
local maximal order of \ch not 2, due to Jacobowitz
\cite{jacobowitz:hermitian}. We give an exposition of the
classification in Section~\ref{sec:02}, for the reader's convenience.
The main part of this paper treats the
the case where $R$ is not maximal, that is, 
the case $p\equiv 3 \ (\!\mod 4)$. We now describe the main results.

Let $r\ge 1$ be an integer. Let $S_r$ be the set of symmetric matrices
in $\GL_r(\F_2)$. Define the equivalence relation $\sim$ on $S_r$ by 
$A\sim B$, for $A, B\in S_r$, if there exists a matrix $P\in
\GL_r(\F_2)$ such that $B=P^t A P$. Denote by $S_r/\!\sim$ the set of
equivalence classes in $S_r$. 
Define the integers $m_r$ for $r\ge 0$ by 
$m_0:=1$ and
\begin{equation}
  \label{eq:11}
  m_r:=\# S_r/\!\sim, \quad \forall\, r\ge 1. 
\end{equation}

\begin{thm}\label{11}
Assume $p\equiv 7 \ (\!\mod 8)$.
  There are 
  \begin{equation}
    \label{eq:12}
    \sum_{r=0}^n m_r
  \end{equation}
non-isomorphic self-dual skew-Hermitian modules over $R$ of
$\Z_2$-rank $2n$. 
\end{thm}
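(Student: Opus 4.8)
The plan is to reduce the classification over the non‑maximal order $R=\Z_2[\pi]$ (with $\pi^2=-p$, $p\equiv 7\ (\!\mod 8)$) to a linear‑algebra count over $\F_2$. First I would pass to the maximal order $O_E$, which here is generated by $\pi$ and $\omega:=(1+\pi)/2$ since $-p\equiv 1\ (\!\mod 8)$ means $E/\Q_2$ is unramified and $O_E=\Z_2[\omega]$ with $\omega^2-\omega+(1+p)/4=0$. Given a self‑dual skew‑Hermitian $R$‑module $(M,\psi)$ of $\Z_2$‑rank $2n$, the submodule $O_E M\subseteq M^\vee$ (computed inside $M\otimes\Q_2$) is an $O_E$‑lattice, and the "defect" lattice sandwiched between $M$ and $O_E M$ is controlled by the finite $\F_2$‑vector space $V:=M/(\pi M + 2M)$, on which the nontrivial datum is a symmetric bilinear form induced by $\psi$ (after suitably rescaling; $\psi(\pi x,y)$ or $\psi(x,y)/2$ on the relevant quotient lands in $\Z_2$ and reduces mod $2$ to a symmetric form because the involution becomes trivial mod $\pi$). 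I would check this induced form is \emph{nondegenerate} precisely because $M$ is self‑dual — this is where self‑duality is used — so it defines a class in $S_r/\!\sim$ where $r=\dim_{\F_2}V$.

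Next I would establish the converse/rigidity: once the $O_E$‑lattice part is fixed (and by the maximal‑order classification recalled in Section~\ref{sec:02}, self‑dual skew‑Hermitian $O_E$‑modules of each even rank are essentially unique, i.e. there is a unique genus and class in the unramified dyadic case), the isomorphism class of $(M,\psi)$ is determined by the pair $(r,[A])$ with $0\le r\le n$ and $[A]\in S_r/\!\sim$. Concretely I would write $M$ in a normal form: an orthogonal‑type decomposition $M\cong M_0\perp M_1$ where $M_0$ is a self‑dual skew‑Hermitian $O_E$‑module of rank $2(n-r)$ and $M_1$ is an $R$‑module of rank $2r$ with $O_E M_1/M_1\cong\F_2^r$, the latter being classified by the Gram matrix $A\in S_r$ of the residual form up to $P^tAP$, i.e. by $S_r/\!\sim$. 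Summing over $r$ from $0$ to $n$ and using $m_0=1$ for the $r=0$ (maximal) case gives the total $\sum_{r=0}^n m_r$.

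The main obstacle I anticipate is the \emph{dichotomy and rigidity at $r$ fixed}: showing that two self‑dual skew‑Hermitian $R$‑modules with the same invariant $r$ and equivalent residual forms are actually isometric, and that no further invariant (e.g. a discriminant or Hasse‑type invariant, or a "type" distinguishing even/odd behaviour of $\psi$ mod $4$) intervenes. In the dyadic setting such extra invariants are exactly what make Hermitian classification delicate, so I would need a Witt‑cancellation statement for these $R$‑modules — cancelling off a hyperbolic or a standard self‑dual summand — to reduce to small rank and then verify the finitely many cases by hand. A secondary point requiring care is that the naive symmetric form on $V$ might a priori be \emph{alternating} (i.e. have zero diagonal), which would land outside $S_r$; here the hypothesis $p\equiv 7\ (\!\mod 8)$ rather than merely $p\equiv 3\ (\!\mod 8)$ should be what forces the diagonal entries to be generically nonzero and the form to range over all of $S_r$, so I would isolate where $8\mid p+1$ enters (it governs $\psi(\pi x,\pi x)=-p\,\psi(x,x)\equiv\psi(x,x)\ (\!\mod 8)$, pinning down the quadratic refinement) and confirm that every class in $S_r/\!\sim$ is realized, so the count is exact and not merely an upper bound.
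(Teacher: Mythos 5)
The overall architecture you propose --- saturate to the maximal order, read off a residual symmetric $\F_2$-form classified by $S_r/\!\sim$, and invoke a Witt-type cancellation to make the invariant $(r,[A])$ complete --- does match the paper's strategy. But the execution contains a genuine error at the very first step: for $p\equiv 7\ (\!\mod 8)$ we have $-p\equiv 1\ (\!\mod 8)$, so $-p$ is a square in $\Q_2$ and $E=\Q_2[X]/(X^2+p)$ is the \emph{split} algebra $\Q_2\times\Q_2$, not an unramified field extension (the unramified field case is $p\equiv 3\ (\!\mod 8)$, i.e.\ Theorem~\ref{12}). The paper's whole Section~\ref{sec:05} exploits this splitting: $V=V^1\oplus V^2$ with each eigenspace isotropic, $L=O_E$ carries the hyperbolic pairing, and after splitting off $L^{\oplus s}$ (Lemma~\ref{51}) the free part $M\simeq R^r$ sits between $2N$ and $N$ with $\ol M=M/2N$ a maximal isotropic subspace of $\ol N=\ol N^1\oplus\ol N^2$ generating $\ol N$ over $\F_2\times\F_2$; writing $\ol M$ as the graph $v_i=e_i+\sum_j u_{ji}f_j$, isotropy forces $U=(u_{ij})$ to be symmetric and generation forces $U\in\GL_r(\F_2)$, giving $S_r/\!\sim$ under the action $U\mapsto A^tUA$. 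Your proposed mechanism for producing the symmetric form does not survive: $\pi$ is a unit of $R$ (since $\pi^2=-p\in\Z_2^\times$), so $M/(\pi M+2M)=0$; the relevant quotient must be formed with $\omega=\pi-1$. Likewise $\psi(\pi x,\pi x)=-p\,\psi(x,x)$ is vacuous because $\psi$ is alternating, so it cannot ``pin down the quadratic refinement,'' and your expectation that the congruence class mod $8$ decides whether the residual form is alternating is off the mark --- both congruence classes yield $S_r/\!\sim$, by different computations ($U$ symmetric in the split case versus $U\mapsto \bar U^tU$ landing in $S_r$ in the $\F_4$ case of Lemma~\ref{66}).

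The second substantive gap is the cancellation statement, which you correctly identify as the crux but leave unestablished; ``reduce to small rank and verify finitely many cases by hand'' does not obviously terminate, since the free part has unbounded rank $r$ and is exactly where the nontrivial invariants live. The paper proves cancellation (Proposition~\ref{54}) not by induction on rank but by lifting: an isometry $M_1\oplus L^{\oplus s}\simeq M_2\oplus L^{\oplus s}$ extends to the ambient self-dual $O_E$-lattices $N_i\oplus L^{\oplus s}$, which are all isomorphic, and one then constructs an isometry $N_1\simeq N_2$ carrying $\ol M_1$ to $\ol M_2$ by lifting Lagrangian bases of $\ol N_1$, $\ol N_2$ compatibly. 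Note also that this cancellation is genuinely needed and not available off the shelf: $R$ is not hereditary and contains no $a$ with $a+\bar a=1$ (Remark~\ref{44}, Remark~\ref{55}), so neither Riehm's results nor the Bayer-Fluckiger--Fainsilber theorem applies. You would need to supply an argument of this kind before the count $\sum_{r=0}^n m_r$ is justified.
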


%


\begin{thm}\label{12}
Assume $p\equiv 3 \ (\!\mod 8)$.
  There are 
  \begin{equation}
    \label{eq:14}
    \sum_{r=0}^n m_r
  \end{equation}
non-isomorphic self-dual skew-Hermitian modules over $R$ of
$\Z_2$-rank $2n$. 
\end{thm}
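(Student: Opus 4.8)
The plan is to stratify the self-dual skew-Hermitian $R$-modules by one numerical invariant, to reduce each stratum to the classification of symmetric bilinear forms over $\F_2$, and to match the strata with the sets $S_r/\!\sim$, exactly as one expects from the shape of \thmref{11}. I would first record the local structure: since $p\equiv 3\ (\!\mod 8)$ one has $-p\equiv 5\equiv -3\ (\!\mod 8)$, so $E=\Q_2(\pi)$ is the \emph{unramified} quadratic field extension of $\Q_2$ (in contrast with the split case of \thmref{11}); its ring of integers $O_E$ is a discrete valuation ring with uniformizer $2$, the element $\pi$ is a unit (indeed $\pi^{-1}=-\pi/p\in R$), the index $[O_E:R]$ equals $2$, the conductor of $R$ in $O_E$ is $\mathfrak{c}=2O_E$, and $O_E=R+\Z_2\alpha$ with $\alpha=(1+\pi)/2$, so $R=\Z_2+\mathfrak{c}$ and $R/\mathfrak{c}\cong\F_2$.

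Given a self-dual skew-Hermitian module $(M,\psi)$ over $R$ with $\Z_2$-rank $2n$, i.e.\ $\dim_E(M\otimes\Q_2)=n$, set $M^\sharp:=O_EM=M+\alpha M$. Because $R=\Z_2+\mathfrak{c}$ is Gorenstein and its only proper over-order is $O_E$, the $R$-module $M$ is isomorphic to $R^{r}\oplus O_E^{\,n-r}$ for a unique $r$, where $r=\dim_{\F_2}(M^\sharp/M)$ (which makes sense since $\mathfrak{c}M^\sharp=2M^\sharp=\mathfrak{c}M\subseteq M$, so $M^\sharp/M$ is an $R/\mathfrak{c}=\F_2$-module); thus automatically $0\le r\le n$. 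Next I would establish the orthogonal decomposition $(M,\psi)\cong(M_0,\psi_0)\perp(M_1,\psi_1)$ with $M_0=O_EM_0$ a self-dual skew-Hermitian $O_E$-lattice of $E$-rank $n-r$ and $M_1\cong R^{r}$ carrying a self-dual skew-Hermitian form. Here one uses the chain $2M^\sharp\subseteq(M^\sharp)^\vee\subseteq M=M^\vee\subseteq M^\sharp$ — obtained from $\psi(M^\sharp,M^\sharp)\subseteq\tfrac12\Z_2$, checked on the generators $m_i,\alpha m_i$ via $\bar\alpha\alpha=(1+p)/4\in\Z_2$ — to see that $\psi$ is $O_E$-unimodular on the maximal part. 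By the exposition of the maximal case in Section~\ref{sec:02} (following Jacobowitz), over the unramified $O_E$ there is a unique self-dual skew-Hermitian $O_E$-lattice in each $E$-rank, so $(M_0,\psi_0)$ is determined by $r$ alone; in particular the stratum $r=0$ contributes exactly $m_0=1$.

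It then remains to classify self-dual skew-Hermitian forms on $R^{r}$. In the $\Z_2$-basis $(e_1,\dots,e_r,\pi e_1,\dots,\pi e_r)$ the Gram matrix has the shape $\begin{pmatrix}A&B\\-B&pA\end{pmatrix}$ with $A$ alternating and $B$ symmetric over $\Z_2$ (using $\bar\pi=-\pi$ and $\pi\bar\pi=p$); reducing modulo $2$ and performing block row and column operations shows the form is $\Z_2$-unimodular if and only if $\ol{A+B}\in\GL_r(\F_2)$, and $\ol{A+B}$ is symmetric, hence lies in $S_r$. A change of $R$-basis $P\in\GL_r(R)$ replaces $A+B$ by $P_0^{\,t}(A+B)P_0$ modulo $2$, where $P_0\in\GL_r(\F_2)$ is the reduction of $P$; since $\GL_r(R)\to\GL_r(\F_2)$ is surjective, the class of $\ol{A+B}$ in $S_r/\!\sim$ is a well-defined isomorphism invariant, and taking $A=0$ and lifting an arbitrary symmetric matrix of $\GL_r(\F_2)$ shows every class of $S_r/\!\sim$ is realized.

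The essential—and hardest—step is the converse: two self-dual skew-Hermitian forms on $R^{r}$ whose residual matrices are $\F_2$-congruent are $R$-isometric. This is a dyadic gluing statement; after applying a lift of the congruence one is reduced to proving that two such forms with the \emph{same} residual matrix are isometric, which I would prove by successive approximation, building the $R$-linear isometry $2$-adically. The delicate feature—precisely as in the $p\equiv 7\ (\!\mod 8)$ case—is that over $\F_2$ a symmetric bilinear form carries no quadratic refinement, so the approximation has to be organized carefully; this same dyadic difficulty also underlies making the orthogonal splitting $(M,\psi)\cong(M_0,\psi_0)\perp(M_1,\psi_1)$ rigorous, and it is exactly where the hypothesis $p\equiv 3\ (\!\mod 8)$ (i.e.\ $E$ unramified) enters. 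Granting all of this, for each $r\in\{0,1,\dots,n\}$ there are exactly $m_r$ isomorphism classes, and classes with distinct $r$ are non-isomorphic because $r$ is an isomorphism invariant, so the total number is $\sum_{r=0}^{n}m_r$.
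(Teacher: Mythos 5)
Your overall skeleton is the same as the paper's: stratify by $r=\dim_{\F_2}(O_EM/M)$ using Theorem~\ref{41}, split off a self-dual $O_E$-part that is unique by the unramified maximal-order theory, and classify the self-dual forms on the free part $R^r$ by a class in $S_r/\!\sim$. Your computation that the residual matrix $\ol{A+B}$ is symmetric and invertible, transforms as $P_0^t(A+B)P_0$ under change of $R$-basis, and realizes every class of $S_r/\!\sim$ is correct. But there is a genuine gap at precisely the step you yourself flag as the essential one: you do not prove that two self-dual forms on $R^r$ with congruent residual matrices are $R$-isometric, you do not prove the orthogonal splitting $(M,\psi)\cong(M_0,\psi_0)\perp(M_1,\psi_1)$ (the paper's Lemma~\ref{64} requires real work with the filtration $\omega\ol M\subset\ol M_0\subset\ol M$), and you do not address cancellation: the splitting is not canonical, so without a Witt-type cancellation (the paper's Proposition~\ref{67}, which as Remark~\ref{55} notes is \emph{not} covered by the general theorems in the literature because $a+\bar a=1$ fails in $R$) it is not clear that the isometry class of $M_1$ is an invariant of $M$, and the count $\sum_r m_r$ is not justified. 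The appeal to ``successive approximation'' is exactly where the dyadic difficulty sits: lifting an isometry modulo $2^k$ to modulo $2^{k+1}$ amounts to solving $X^*G+GX\equiv C$ with $C$ skew-Hermitian, i.e.\ to the smoothness of the unitary group scheme of $(R^r,\psi)$ over $\Z_2$, which is not automatic for the non-maximal order $R$ and cannot be waved through.

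For comparison, the paper avoids any Gram-matrix approximation on $R^r$: it rescales and passes to the $O_E$-lattice $N=O_EM$ with $\psi_N=2\psi$, which is the standard self-dual $O_E$-lattice $(L_0,\psi_0)^{\oplus r}$, observes that $2N\subset M\subset N$ with $\ol M=M/2N$ a maximal isotropic $\F_2$-subspace of the $\F_4$-Hermitian space $\ol N=N/2N$ generating it over $\F_4$, and identifies the isometry classes with the finite double coset space $U(r)(\F_2)\backslash Y_r/\GL_r(\F_2)\simeq S_r/\!\sim$ via $U\mapsto\bar U^tU$ (Proposition~\ref{65} and Lemma~\ref{66}). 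The only lifting needed there is of automorphisms of the \emph{maximal-order} lattice $(N,\psi_N)$, handled by smoothness of the isometry scheme over $\Z_2$ in Proposition~\ref{67}. If you want to complete your route, the cleanest repair is to replace the successive approximation by this finite computation over the residue field $\F_4$.
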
       

The proofs are given in Sections~\ref{sec:05} and \ref{sec:06}. 
Though the statements of
Theorems~\ref{11} and \ref{12} look the same, the structures in the
classification are different. It is easy to compute the integers
$m_r$; see Lemma~\ref{57}. The classification problem for
($\Z_2$-valued) skew-Hermitian modules is equivalent to the same
problem for Hermitian
modules. Indeed, let $(M,\psi)$ be a skew-Hermitian module over
$R$. Then there is a unique Hermitian form
\[ \varphi:M\times M\to 2^m R \]
such that 
\[ \psi(x,y):=\Tr_{E/\Q_2} \pi \varphi(x,y),\quad  \forall\,x,y\in M. \] 
Here $m$ is the smallest integer such that $R^\vee\subset 2^m R$,
where $R^\vee$ 
is the dual lattice of $R$ for the pairing $(a,b)\mapsto
\Tr_{E/\Q_2}(ab)$; in fact $m=0$ of $-1$ when $R$ is maximal or not,
cf. \S~\ref{sec:61}. 
Conversely, given a Hermitian module $(M,\varphi)$ we get a
($\Z_2$-valued) skew-Hermitian module $(M,\psi)$ by setting
\[ \psi(x,y):=\Tr_{E/\Q_2} \pi \varphi(x,y),\quad \forall\, x,y\in
M. \]
  

It is worth noting that when $p\equiv 3 \ (\! \mod 4)$, the ground ring
$R$ is not hereditary (see Remark~\ref{44}), 
nor the condition that $a+\bar a=1$ for some
$a\in R$ is not fulfilled. Therefore, results in the paper complement
those of Riehm \cite{riehm:hermitian}, and 
a general Witt type cancellation theorem \cite[Theorem
3]{bayer-fluckiger-fainsilber} obtained by Bayer-Fluckiger and
Fainsilber; see Propositions~\ref{54} and \ref{67}. \\



The motivation of this work is to determine the Tate modules of
certain abelian varieties over finite fields as Galois modules. The
reader who is not familiar with abelian varieties may consult the
reference Mumford~\cite{mumford:av}. 
An abelian variety $A$ over a field $k$ of
\ch $p$ is said to be {\it superspecial} if it is isomorphic to a
product of supersingular elliptic curves over an algebraic closure
$\bar k$ of $k$. 
Let $\Sigma_n(\Fp)$ denote the set of isomorphism classes of $n$-dimensional
superspecial abelian varieties $(A,\lambda)$ over $\Fp$ together with 
a principal polarization $\lambda$ over $\Fp$ 
such that $\pi_A^2=-p$, where $\pi_A$ is the Frobenius
endomorphism of $A$.
Suppose
 $(A,\lambda)$ is an object in $\Sigma_n(\Fp)$.
For any prime $\ell\neq p$, the associated Tate
module $T_\ell(A)$ is a free $\Z_\ell$-module of rank
$2n$ together
with a continuous action $\rho_A$ of 
the Galois group $\calG:=\Gal(\Fpbar/\Fp)$
and a Galois equivariant self-dual alternating pairing (the Weil pairing)
\[ e_\ell:T_\ell(A)\times T_\ell(A)\to \Z_\ell(1), \]
where 
\[ \Z_\ell(1):=\lim_{\leftarrow} \mu_{\ell^m}(\Fpbar) \]
is the Tate twist. Let $\sigma:x\mapsto x^p$ be the Frobenius
automorphism of 
$\calG$. Since $\sigma x=\pi_A x$ for all $x\in T_\ell(A)$, the action
of the Galois group $\calG$ on the Tate module $T_\ell(A)$ factors through the
quotient
\[ \Z_\ell[\calG]\to  \Z_\ell[X]/(X^2+p)=\Z_\ell[\pi_A]. \]
 The pairing $e_\ell$ induces
an involution 
$a\mapsto \bar a$ on $\Z_\ell[\pi_A]$ by the adjoint. 
It follows from $\pi_A \ol \pi_A=p$ and $\pi_A^2=-p$ that this
involution is non-trivial. 
Fix an isomorphism $\Z_\ell(1)\simeq \Z_\ell$ as
$\Z_\ell$-modules.  The problem of classifying Tate modules of superspecial
abelian varieties in $\Sigma_n(\Fp)$ as $\calG$-modules together to
the Weil pairing amounts to
\begin{enumerate}
\item classifying self-dual skew-Hermitian modules
over the ring $\Z_\ell[X]/(X^2+p)$, and
\item determining the image of $\Sigma_n(\Fp)$ in the set 
of isomorphism classes of skew-Hermitian modules.
\end{enumerate}

The second problem seems to be hard.
Besides, in order to state the result of the second problem, 
one needs an explicit description of the classification.
In this paper we limit ourselves to the first problem.


As mentioned before, if the ring $\Z_\ell[X]/(X^2+p)$
is the maximal order in the (commutative and semi-simple)
$\Q_\ell$-algebra $\Q_\ell[X]/(X^2+p)$, then the classification is
known. As well-known the ring $\Z_\ell[X]/(X^2+p)$ is maximal except when 
$\ell=2$ and $p\equiv 3 \ (\!\mod 4)$, which is the most complicated
case. Theorems ~\ref{11} and \ref{12} give the answer to the first
probelm in this case. An immediate consequence of Theorems~\ref{11}
and \ref{12} (also using Lemma~\ref{57}) is the following.

\begin{thm}\label{13}
  Notation as above. Assume $p\equiv 3 \ (\!\mod 4)$. There are at
  most $n+\lfloor n/2 \rfloor$ isomorphism classes of polarized
  $2$-adic Tate modules $(T_2(A),\rho_A, e_2)$ for all objects
  $(A,\lambda)$ in $\Sigma_n(\Fp)$.
\end{thm}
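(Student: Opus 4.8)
The plan is to realize each polarized $2$-adic Tate module arising from $\Sigma_n(\Fp)$ as a self-dual skew-Hermitian $R$-module and then quote the classification proved in Sections~\ref{sec:05}--\ref{sec:06}.

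\emph{Reduction to skew-Hermitian modules.} For $(A,\lambda)\in\Sigma_n(\Fp)$ I would use exactly the construction recalled in Section~\ref{sec:01}: the Galois action $\rho_A$ on the free $\Z_2$-module $T_2(A)$ of rank $2n$ factors through $\Z_2[\pi_A]=\Z_2[X]/(X^2+p)=R$ because $\sigma$ acts as $\pi_A$ and $\pi_A^2=-p$; after the fixed identification $\Z_2(1)\simeq\Z_2$ the Weil pairing $e_2$ is a non-degenerate $\Z_2$-valued alternating form, and $\psi(ax,y)=\psi(x,\bar a y)$ holds because the involution on $R$ is the adjoint of $e_2$; finally $(T_2(A),e_2)$ is self-dual since $\lambda$ is a \emph{principal} polarization. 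Thus $(A,\lambda)\mapsto(T_2(A),\rho_A,e_2)$ takes values in the set of isomorphism classes of self-dual skew-Hermitian $R$-modules of $\Z_2$-rank $2n$, so the number of isomorphism classes of polarized Tate modules coming from $\Sigma_n(\Fp)$ is at most the cardinality of that set.

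\emph{Counting and simplifying.} Since $p\equiv 3\ (\mathrm{mod}\ 4)$, either $p\equiv 7\ (\mathrm{mod}\ 8)$ or $p\equiv 3\ (\mathrm{mod}\ 8)$, so Theorem~\ref{11} or Theorem~\ref{12} applies and the target set has $\sum_{r=0}^{n}m_r$ elements; it then remains to evaluate this sum. I would invoke Lemma~\ref{57} to compute the $m_r$ (namely $m_0=1$, $m_r=1$ for $r$ odd, and $m_r=2$ for $r$ positive even), and split the sum according to the parity of $r$ to obtain a closed form, which one matches against $n+\lfloor n/2\rfloor$. To pass from the a priori bound $\#\{\text{Tate modules}\}\le\sum_{r=0}^{n}m_r$ to the stated sharp constant, the extra input I would bring in is structural: a superspecial $(A,\lambda)$ over $\Fp$ with $\pi_A^2=-p$ is, over $\Fp$, isogenous to a power of a supersingular elliptic curve with complex multiplication by an order in $E$, so by Tate's theorem $\mathrm{End}_R(T_2(A))=\mathrm{End}_{\Fp}(A)\otimes\Z_2$ is of restricted shape; equivalently $T_2(A)$ is, as an $R$-module, a direct sum of copies of $R$ and of the maximal order $O_E$. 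Comparing this constraint with the explicit list of self-dual skew-Hermitian modules produced in Sections~\ref{sec:05}--\ref{sec:06} and using Lemma~\ref{57} then pins down the count $n+\lfloor n/2\rfloor$.

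\emph{Main obstacle.} The reduction step and the appeal to Theorems~\ref{11}--\ref{12} are formal once those results are in hand. The point that requires care is the final bookkeeping: expressing the structural constraint on $T_2(A)$ in terms of the invariants $(r,[A])$ that index the classification, determining precisely which self-dual skew-Hermitian modules of rank $2n$ fail to be realized by a member of $\Sigma_n(\Fp)$, and checking that the resulting combinatorial count is exactly $n+\lfloor n/2\rfloor$. I expect essentially all of the remaining work to be concentrated in this matching, which is why it is only the \emph{upper} bound — and not an exact count — that one obtains, the precise image of $\Sigma_n(\Fp)$ being the harder ``problem (2)'' discussed in the introduction.
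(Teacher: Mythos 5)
Your overall route is exactly the paper's: the paper offers no more proof of this statement than the remark that it is an immediate consequence of Theorems~\ref{11} and \ref{12} together with Lemma~\ref{57}, i.e.\ one bounds the number of polarized Tate modules by the number of isomorphism classes of self-dual skew-Hermitian $R$-modules of $\Z_2$-rank $2n$ and then evaluates $\sum_{r=0}^{n}m_r$. Your reduction step and the appeal to Theorems~\ref{11}--\ref{12} are correct and are what the paper intends.

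The gap is in the final bookkeeping, and it is a real one. By Lemma~\ref{57} one has $m_0=1$, $m_r=1$ for odd $r\ge 1$ and $m_r=2$ for even $r\ge 2$, hence
\[
\sum_{r=0}^{n} m_r \;=\; 1+\lceil n/2\rceil+2\lfloor n/2\rfloor \;=\; n+\lfloor n/2\rfloor+1,
\]
which exceeds the constant in the statement by one; already for $n=1$ the classification produces two classes (the one with $M\simeq R$ and the one with $M\simeq O_E$), not one. So the classification alone only yields the bound $n+\lfloor n/2\rfloor+1$, and you cannot simply ``match'' the closed form against $n+\lfloor n/2\rfloor$. Moreover, the structural input you propose in order to close this gap is vacuous: the constraint that $T_2(A)\simeq R^r\oplus O_E^s$ as an $R$-module is satisfied by \emph{every} self-dual skew-Hermitian module of rank $2n$ --- this is precisely what is established at the start of Sections~\ref{sec:05} and \ref{sec:06} (freeness of $V$ over $E$ plus Theorem~\ref{41}) --- so it excludes no class from the count. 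To reach the constant $n+\lfloor n/2\rfloor$ one would have to show that some specific isomorphism class is not realized by any $(A,\lambda)\in\Sigma_n(\Fp)$, which is exactly the ``second problem'' the introduction declares hard and out of scope (and Lemma~\ref{43} suggests that, at least at the level of the underlying $R$-modules, nothing is excluded). As written, neither your argument nor the paper's one-line justification produces the stated constant; what does follow immediately from Theorems~\ref{11}--\ref{12} and Lemma~\ref{57} is the bound $n+\lfloor n/2\rfloor+1$.
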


The paper is organized as follows. In Section~\ref{sec:02} we give an
exposition of the classification of Hermitian forms over
non-Archimedean local maximal orders of \ch not 2, following
Jacobowitz \cite{jacobowitz:hermitian}. Section 3 gives the complete
classification of $R$-modules. Sections 4 and 5 treat the
classification of skew-Hermitian modules in the cases
$p\equiv 7 \ (\!\mod 8)$ and $p\equiv 3 \ (\!\mod 8)$ separately.    


\begin{thank}
  The manuscript is prepared during the
  author's stay at l'Institut des Hautes \'Etudes Scientifiques. 
  He acknowledges the institution for kind hospitality and excellent working
  conditions. The research was partially supported by grants NSC
  97-2115-M-001-015-MY3 and AS-99-CDA-M01.
\end{thank}

\section{Hermitian forms over local fields}
\label{sec:02}
In this section we give an exposition of the classification of 
Hermitian forms over local
maximal orders, for the reader's convenience. 
Our reference is Jacobowitz \cite{jacobowitz:hermitian}.

\subsection{Hermitian spaces}
\label{sec:21}

Let $F$ be a non-Archimedean local field, that is, it is complete with
respect to a discrete valuation ring with finite residue field, which is
assumed of \ch not 2. Let $O_F$ be the ring of integers of $F$, and
$\pi_F$ a uniformizer of $O_F$. Let $E$ be a quadratic field
extension of $F$. Let $O_E$ be the ring of integers in $E$, and $\pi$
a uniformizer of $O_E$. Write $a\mapsto \bar a$ for the non-trivial
automorphism of $E$ over $F$, and let $\pi=\pi_F$ if the extension
$E/F$ is unramified. Let $v$ be the normalized valuation on $E$
such that $v(\pi)=1$. 

By a Hermitian space (over $E$) we mean a finite-dimensional vector $V$
over $E$, together with a bi-additive pairing 
\[ h:V\times V\to E \]
such that $h(a x,by)=a\bar b h(x,y)$ for all $a, b\in E$ and $x,y\in
V$. The pairing $h$ is called a Hermitian form. It is called
non-degenerate if the induced linear map $V\to V^*$ by $x\mapsto
h(\cdot, x)$ is injective (and hence isomorphic). By a Hermitian module
(over $O_E$) we mean a finite free $O_E$-module $L$ together with a
hermitian form $h$, ususally assumed $h(L,L)\subset O_E$. 
It is called non-degenerate if the same property holds
for $L$, and called {\it unimodular} if the induced 
map $L\to L^*$ is also surjective. 
A full rank submodule in a Hermitian space is usually called a lattice.
A decomposition of a Hermitian module (resp. space) $L$ into
submodules (resp. subspaces) $L_1$ and $L_2$:
\[ L=L_1\oplus L_2 \]
means that $L=L_1+L_2$, $L_1\cap L_2=0$ and $h(x,y)=0$ for all $x\in
L_1$ and $y\in L_2$. For a submodule (resp. subspace) $L_1$ of $L$,
denote by $L_1^{\bot}$ the orthogonal complement of $L_1$ in $L$. 

\begin{lemma}
  Any Hermitian space $(V,h)$ has a decomposition 
\[ V=V^{\bot} \oplus V_1, \]
where $V_1$ is any subspace complemented to the null subspace $V^{\bot}$.
The Hermitian subspace $V_1$, if non-zero, is
non-degenerate and the projection $V_1 \to V/V^{\bot}$ is isometric. 
\end{lemma}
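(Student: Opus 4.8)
The plan is to recognize $V^{\bot}$ as the radical of $h$ and to check that any vector-space complement works automatically. First I would note that $V^{\bot}=\{x\in V:h(x,y)=0\ \text{for all}\ y\in V\}$ is an $E$-subspace, being the intersection of the subspaces $\{x\in V:h(x,y)=0\}$ over $y\in V$. Then I would choose any $V_1$ with $V=V^{\bot}\oplus V_1$ as $E$-vector spaces, which exists by elementary linear algebra. The orthogonality $h(x,y)=0$ for $x\in V^{\bot}$ and $y\in V_1$ is built into the definition of $V^{\bot}$, so $V=V^{\bot}\oplus V_1$ is already a decomposition of Hermitian spaces in the sense of the text.

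Next I would verify that $V_1$ is non-degenerate when non-zero. If $x\in V_1$ pairs trivially with all of $V_1$, then, since $x$ also pairs trivially with all of $V^{\bot}$, it pairs trivially with $V^{\bot}+V_1=V$, whence $x\in V^{\bot}\cap V_1=\{0\}$. For the isometry statement, I would observe that $h$ descends to a Hermitian form $\bar h$ on $V/V^{\bot}$ via $\bar h(\bar x,\bar y):=h(x,y)$, which is well defined precisely because $V^{\bot}$ is the radical of $h$. The canonical projection $V\to V/V^{\bot}$ restricts to an $E$-linear isomorphism $V_1\isoto V/V^{\bot}$ (injective since $V_1\cap V^{\bot}=0$, surjective by the direct-sum decomposition), and by construction it carries $h|_{V_1}$ to $\bar h$; hence it is an isometry.

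There is no real obstacle here: the whole content of the statement is that $V^{\bot}$ absorbs all the degeneracy of $h$, so every linear complement is automatically orthogonal to $V^{\bot}$, non-degenerate, and isometric to the quotient $V/V^{\bot}$. The only point requiring a moment's care is that "decomposition" is meant in the orthogonal sense defined just above the lemma, which is settled by the one-line orthogonality remark.
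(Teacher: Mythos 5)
Your proof is correct, and it is the standard argument: the paper states this lemma without proof (it is part of the expository Section~2 following Jacobowitz), so there is nothing to diverge from. Your write-up supplies exactly the routine verification the paper leaves implicit, including the one point worth making explicit, namely that any linear complement of the radical is automatically an orthogonal complement in the sense of the decomposition defined just before the lemma.
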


From now on, we assume that $(V,h)$ is non-degenerate. 
The determinant or discriminant of $V$ (resp. of a lattice $L$),
denoted by $d V$ 
(resp. $d L$), is defined as
$\det(h(x_i,x_j))$, where $x_1,\dots, x_n$ is an $E$-basis for $V$
(resp. an $O_E$-basis for $L$). The determinant $dV$ (resp. $dL$)
is unique up to an element in $N_{E/F}(E^\times)$
(resp. $N_{E/F}(O_E^\times)$). Write $dV_1\simeq dV_2$
(resp. $dL_1\simeq dL_2$) if there is an element $a\in
N_{E/F}(E^\times)$ (resp. $a\in N_{E/F}(O_E^\times)$) such that
$dV_2=a\cdot dV_1$ (resp. $dL_2=a\cdot dL_1$). 
  
\begin{thm}\
\begin{enumerate}
\item   We have $V=Ex_1\oplus\dots \oplus Ex_{n-1} \oplus E x_n$ with
    $h(x_i,x_i)=1$ for $i=1,\dots, n-1$ and $h(x_n, x_n)=dV$.
\item Two Hermitian spaces $V_1$ and $V_2$ are isometric if and only
  if $\dim V_1=\dim V_2$ and $dV_1\simeq dV_2$.   
\end{enumerate}

\end{thm}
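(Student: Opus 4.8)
The plan is to derive both parts from one key fact: \emph{every non-degenerate Hermitian space over $E$ of dimension at least $2$ represents every element of $F^\times$.} Granting this, the rest is formal.

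First I would record the diagonalization of Hermitian forms. Since the symmetric $F$-bilinear form $(a,b)\mapsto\Tr_{E/F}(ab)$ on $E$ is non-degenerate, the usual polarization argument shows that a non-zero non-degenerate Hermitian space $(V,h)$ contains a vector $x$ with $h(x,x)\neq 0$: if $h(v,v)=0$ for every $v$, then $\Tr_{E/F}\bigl(\bar a\,h(v,w)\bigr)=h(v+aw,v+aw)=0$ for all $a\in E$ and all $v,w\in V$, forcing $h\equiv 0$, which contradicts non-degeneracy. As $Ex$ is then non-degenerate, $V=Ex\oplus(Ex)^{\bot}$, and iterating produces an orthogonal basis. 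Thus $V\cong\langle a_1\rangle\oplus\cdots\oplus\langle a_n\rangle$ with $a_i=h(x_i,x_i)\in F^\times$ and $dV=a_1\cdots a_n$; replacing $x_i$ by $cx_i$ scales $a_i$ by the norm $N_{E/F}(c)$, so the class of each $a_i$ in $F^\times/N_{E/F}(E^\times)$ is intrinsic.

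The substantial point is the representation claim, and it is enough to prove it for a binary form $\langle a,b\rangle$, since a non-degenerate plane sits inside any $V$ with $\dim V\ge 2$. Write $E=F(\sqrt d)$ with $d\in F^\times$ a non-square, so that $N_{E/F}(u+v\sqrt d)=u^2-dv^2$; then the function $(z_1,z_2)\mapsto a\,N_{E/F}(z_1)+b\,N_{E/F}(z_2)$ on $E^2\cong F^4$ is, in the evident coordinates, the quaternary quadratic form $q=\langle a,-ad,b,-bd\rangle$ over $F$. For $c\in F^\times$, the rank-$5$ form $\langle a,-ad,b,-bd,-c\rangle$ is isotropic, because a non-degenerate quadratic form of rank $\ge 5$ over a non-Archimedean local field is isotropic (the $u$-invariant of such a field equals $4$). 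If the isotropic vector has non-zero last coordinate, then $q$ represents $c$; otherwise $q$ is itself isotropic and hence represents all of $F$, so once more $q$ represents $c$. (Equivalently, $\langle a,b\rangle$ is either isotropic or, being anisotropic and quaternary, is the norm form of the quaternion division algebra over $F$, whose norm is surjective onto $F^\times$.) This appeal to the bound $u(F)\le 4$ is the one non-formal ingredient, and is the step I expect to be the essential input.

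It remains to assemble the pieces. For part (1) I induct on $n=\dim V$: the case $n=1$ is the definition of $dV$, and for $n\ge 2$ the representation claim supplies $x_1$ with $h(x_1,x_1)=1$, whence $V=Ex_1\oplus V'$ with $\dim V'=n-1$ and $dV'=dV$; applying the inductive hypothesis to $V'$ gives $V=Ex_1\oplus\cdots\oplus Ex_{n-1}\oplus Ex_n$ with $h(x_i,x_i)=1$ for $i<n$ and $h(x_n,x_n)=dV$. For part (2), one direction is clear, together with the observation that a change of basis by $P\in\GL_n(E)$ multiplies the Gram determinant by $N_{E/F}(\det P)\in N_{E/F}(E^\times)$, so isometric spaces satisfy $dV_1\simeq dV_2$. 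Conversely, if $\dim V_1=\dim V_2$ and $dV_2=N_{E/F}(c)\,dV_1$ with $c\in E^\times$, then by (1) both spaces are isometric to $\langle 1\rangle\oplus\cdots\oplus\langle 1\rangle\oplus\langle dV_1\rangle$, using $\langle dV_1\rangle\cong\langle dV_2\rangle$ by rescaling a basis vector by $c$; hence $V_1\cong V_2$.
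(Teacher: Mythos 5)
Your proposal is correct and follows essentially the same route as the paper, which cites Jacobowitz and notes that statement (1) rests on the fact that a non-degenerate quadratic form of rank $\ge 4$ over $F$ represents every non-zero element of $F$; your reduction of the binary Hermitian representation problem to the quaternary form $\langle a,-ad,b,-bd\rangle$ and the appeal to $u(F)\le 4$ is precisely the standard way of filling in that sketch. The diagonalization step and the deduction of (2) from (1) also match the intended argument.
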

\begin{proof}
  This is Theorem~3.1 of \cite{jacobowitz:hermitian}. The statement 
  (1) follows
  from the fact that any non-degenerate quadratic form of rank $\ge 4$
  over $F$ represents every non-zero element in $F$. The statement (2)
  follows from (1). 
\end{proof}

\subsection{General properties of Hermitian lattices}
\label{sec:22}

Let $(L,h)$ be a Hermitian lattice. For any elements $x_1,\dots,
x_n$ in $L$, denote by $<x_1,\dots, x_n>_{O_E}$ the $O_E$-submodule
generated by $x_1,\dots, x_n$.  
If $L$ has an orthogonal basis
$x_1, \dots, x_n$ with $h(x_i,x_i)=\alpha_i$, we may write
\[ L =< x_1>\oplus \dots \oplus <x_n> \simeq (\alpha_1)\oplus \dots
\oplus (\alpha_n). \]

\begin{defn}\
\begin{enumerate}
\item A vector $x\in L$ is called {\it maximal} if $x\not\in \pi L$. 
\item Let $sL:=\{h(x,y)|x,y\in L\}$, and let $nL$ be the $O_E$-ideal
  generated by all elements $h(x,x)$ with $x\in L$. Clearly, one has $nL\subset
  sL$. 
\item We call $L$ {\it normal} if $nL=sL$, and {\it subnormal}
  otherwise. 
\item A lattice $L$ is called {\it $\pi^i$-modular}, where $i\in \Z$, if
  $h(x,L)=(\pi^i)$ for every maximal vector $x\in L$; $L$ is called
  {\it modular} if it is $\pi^i$-modular for some $i\in \Z$.   
\end{enumerate}
\end{defn}
 
Clearly, if $L_1$ and $L_2$ are $\pi^i$-modular, then so as the
  direct sum $L_1\oplus L_2$. Any rank one lattice is modular. The
  lattice $L=<x,y>$ with $v(h(x,y))=i, v(h(x,x)>i$, and $v(h(y,y))>i$
  is a $\pi^i$-modular plane. We write 
\[ 
\begin{pmatrix}
  a & b \\ \bar b & c
\end{pmatrix} \]
for the Hermitian plane $(O_E^2, h)$ with $h(e_1,e_1)=a$,
$h(e_1,e_2)=b$ and $h(e_2,e_2)=c$.  
For any $i\in \Z$, define the
  hyperbolic plane $H(i)$ to be 
\[ 
\begin{pmatrix}
  0 & \pi^i \\ \bar \pi^i & 0
\end{pmatrix}. \]

 
\begin{prop}\label{24}
  There is an $O_E$-basis $x_1,\dots, x_{r}, y_1,\dots, y_s,
  z_1,\dots, z_s$ such that 
\[ V=<x_1>\oplus \dots \oplus <x_r>\oplus <y_1,z_1>\oplus \dots \oplus
  <y_s,z_s> \]
and all components are modular.  
\end{prop}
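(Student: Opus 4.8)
The plan is to decompose the Hermitian lattice $V$ into modular components by an induction on the rank, splitting off one modular piece at a time. The key structural input is the notion of scale: one considers the ideal $sV=\{h(x,y)\mid x,y\in V\}$, which since $V$ has finite rank is $(\pi^i)$ for some $i\in\Z$ (the \emph{scale} of $V$); after rescaling we may assume $i=0$, i.e. $sV=O_E$. First I would distinguish two cases according to whether $V$ is normal or subnormal, i.e. whether $nV=sV$ or $nV\subsetneq sV$.

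If $V$ is normal, there is a maximal vector $x$ with $h(x,x)$ a unit. Then the rank-one lattice $\langle x\rangle$ is $\pi^0$-modular (hence unimodular), and by the usual argument one checks that $V=\langle x\rangle\oplus\langle x\rangle^{\bot}$: indeed for any $y\in V$ the vector $y-\frac{h(y,x)}{h(x,x)}x$ lies in $x^{\bot}$ and this is an honest $O_E$-module operation because $h(x,x)$ is a unit. The orthogonal complement has strictly smaller rank and still has scale contained in $O_E$, so by induction it decomposes into modular components of the required shape, and $\langle x\rangle$ contributes one more $\langle x_j\rangle$-term.

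If $V$ is subnormal, then $sV=O_E$ but $h(x,x)\in\pi O_E$ (or more precisely $nV\subsetneq O_E$) for all $x$; pick $x,y$ with $h(x,y)$ a unit. Then on the plane $P=\langle x,y\rangle$ the Gram matrix has unit off-diagonal and non-unit diagonal entries, so $P$ is a $\pi^0$-modular plane; one checks its discriminant is a norm from $O_E^\times$ and, applying the rank-two classification (or a direct change of basis over $O_E$, legitimate since $\det$ is a unit), that $P$ is the hyperbolic plane $H(0)=\langle y_k,z_k\rangle$. Again $P$ is unimodular, so $V=P\oplus P^{\bot}$ — the projection $V\to P$ along a unimodular plane splits $O_E$-linearly — and $P^{\bot}$ has smaller rank with scale in $O_E$, so induction finishes. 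Combining the two cases and the inductive hypothesis yields the stated decomposition with $r$ rank-one blocks $\langle x_j\rangle$ and $s$ hyperbolic-type planes $\langle y_j,z_j\rangle$, all modular.

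The main obstacle is the subnormal case: one must verify carefully that a $\pi^0$-modular plane with non-unit diagonal really is isometric to $H(0)$ over $O_E$ (not merely over $E$), which uses the fact that $1+\pi O_E$-valued quantities can be absorbed by units and, in the dyadic-sensitive situation, a short computation to clear the diagonal entries — this is exactly where the arithmetic of $O_E$ enters and where care about characteristic $2$ would be needed, though here $F$ has residue characteristic possibly $2$ only in the later non-maximal sections, so in the present maximal-order setting the clearing is routine. The bookkeeping of scales under passing to orthogonal complements is the other point to watch, but it is straightforward once one notes that $s(P^{\bot})\subseteq sV$.
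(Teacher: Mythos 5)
Your overall strategy is the standard one and, as far as one can tell, essentially the argument behind the source the paper cites (the paper gives no proof of its own here, only a reference to Proposition 4.3 of Jacobowitz): induct on the rank, splitting off a rank-one $\pi^i$-modular component when $nL=sL$ and a $\pi^i$-modular plane when $nL\subsetneq sL$. The splitting $L=P\oplus P^{\bot}$ is correctly available in both cases: the Gram matrix of $P$ has determinant of valuation exactly $2i$ (the diagonal product has strictly larger valuation than $h(x,y)\overline{h(x,y)}$), so Cramer's rule gives the orthogonal projection onto $P$ with coefficients in $O_E$.

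There is, however, one step that is false as written. In the subnormal case you claim the $\pi^0$-modular plane $P$ with unit off-diagonal and non-unit diagonal entries is isometric to $H(0)$. In the ramified dyadic case this fails: $dP=-b\bar b\,(1-ac(b\bar b)^{-1})$ with $1-ac(b\bar b)^{-1}\in 1+\pi_F O_F$, and $1+\pi_F O_F$ need not consist of norms when $E/F$ is ramified dyadic; concretely, Propositions~\ref{212} and \ref{213} of the paper exhibit $\pi^i$-modular planes that are anisotropic, or subnormal with $nL\neq nH(i)$, hence not isometric to $H(i)$. Your closing remark that the dyadic difficulty ``does not arise in the maximal-order setting'' is the source of the slip: Section~\ref{sec:23} is precisely about ramified dyadic $E/F$ with $O_E$ maximal, and there modular planes need not be hyperbolic nor admit orthogonal bases (cf.\ Proposition~\ref{25}). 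Fortunately the proposition only asserts that the components are \emph{modular}, not hyperbolic, so your proof is repaired simply by deleting the identification $P\simeq H(0)$: modularity of $P$ follows directly from $v(h(x,y))=i$ together with $v(h(z,z))>i$ for all $z$, which is automatic in the subnormal case. A minor additional point: ``rescaling so that $sV=O_E$'' cannot always be done by a Hermitian rescaling when $E/F$ is ramified and the scale exponent $i$ is odd, since one may only scale by elements of $F$; it is cleaner to run the induction with arbitrary scale $(\pi^i)$, which costs nothing.
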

\begin{proof}
  This is Proposition 4.3 of \cite{jacobowitz:hermitian}.
\end{proof}

\begin{prop}\label{25}
  A $\pi^i$-modular lattice $L$ has an orthogonal basis if any of
  the following conditions holds:
\begin{enumerate}
\item $L$ has odd rank.
\item $L$ is normal.
\item $i=0$ and there is an element $a\in E$ such that $v(a)=v(a+\bar
  a)=0$. 
\end{enumerate}
\end{prop}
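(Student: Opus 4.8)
The plan is to reduce the statement, in each case, to peeling off rank-one orthogonal summands one at a time, the engine being the following splitting principle, which I would record as a lemma. Call $x\in L$ a \emph{normal maximal vector} if $v(h(x,x))=i$; such an $x$ is automatically maximal, since $x=\pi x'$ would give $v(h(x,x))=2+v(h(x',x'))\ge i+2>i$. If $L$ is $\pi^i$-modular and has a normal maximal vector $x$, then $L=\langle x\rangle\oplus x^{\perp}$, with $x^{\perp}$ again $\pi^i$-modular of rank $\rank L-1$: indeed $h(x,x)\in F$ generates $h(x,L)=(\pi^i)$, so for every $y\in L$ the element $h(y,x)h(x,x)^{-1}$ lies in $O_E$ and $y-h(y,x)h(x,x)^{-1}x\in x^{\perp}$; one then checks $\langle x\rangle\cap x^{\perp}=0$, that a maximal vector of $x^{\perp}$ stays maximal in $L$, and hence that $x^{\perp}$ is $\pi^i$-modular. (Throughout I use that $sL=(\pi^i)$ for a $\pi^i$-modular lattice, which is immediate.) Granting this, I argue by induction on $\rank L$, rank one being trivial; for each hypothesis it suffices to produce a normal maximal vector whose orthogonal complement still satisfies that hypothesis.

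Case (2) is the heart. Normality means $nL=sL=(\pi^i)$, so a normal maximal vector $x_1$ exists; write $L=\langle x_1\rangle\oplus M$. If $M$ is normal, induction applies. If $M$ is subnormal, then $sM=(\pi^i)$ furnishes linearly independent $y,z\in M$ with $v(h(y,z))=i$ while $v(h(y,y)),v(h(z,z))>i$; the Gram determinant of $P:=\langle y,z\rangle$ then has valuation exactly $2i$, so $P$ is a $\pi^i$-modular direct summand of $M$, say $M=P\oplus P'$. Now $\langle x_1\rangle\oplus P$ is a normal rank-three lattice that I can diagonalize directly: for $w:=x_1+y$ the cross terms vanish, so $h(w,w)=h(x_1,x_1)+h(y,y)$ has valuation $i$, whence $\langle x_1\rangle\oplus P=\langle w\rangle\oplus w^{\perp}$; and $z':=z-h(z,w)h(w,w)^{-1}w$ lies in $w^{\perp}$ (its coefficient is a unit) with $h(z',z')=h(z,z)-h(z,w)\overline{h(z,w)}\,h(w,w)^{-1}$, again of valuation $i$, so $w^{\perp}=\langle z'\rangle\oplus\langle\alpha\rangle$. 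Thus $L\cong\langle\beta_1\rangle\oplus\langle\beta_2\rangle\oplus\langle\beta_3\rangle\oplus P'$, and the orthogonal complement of $\langle\beta_1\rangle$ here has rank one less and is normal (it contains the rank-one modular summand $\langle\beta_2\rangle$), so induction completes the case.

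Case (1): by Proposition~\ref{24} a modular lattice of odd rank decomposes orthogonally into rank-one and rank-two modular sublattices, all necessarily $\pi^i$-modular, and a parity count forces at least one rank-one summand $\langle x_1\rangle$; since $v(h(x_1,x_1))=i$ we get $(\pi^i)\subseteq nL\subseteq sL=(\pi^i)$, so $L$ is normal and Case (2) applies. Case (3): here $i=0$, and it suffices to show a unimodular $L$ is normal, for then Case (2) applies. Fix a maximal $y\in L$; unimodularity gives $z\in L$ with $h(y,z)=1$. If $v(h(y,y))=0$ or $v(h(z,z))=0$ there is a normal maximal vector already; otherwise $w:=ay+z$ is maximal, since $h(w,y)=a\,h(y,y)+1$ is a unit, and $h(w,w)=a\bar a\,h(y,y)+(a+\bar a)+h(z,z)$ has valuation $0$, because $v(a+\bar a)=0$ while the other two terms have positive valuation. (In the setting of the paper $2$ is a unit, so $a=1$ always works.)

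The step I expect to be the real obstacle is Case (2) in the ramified situation with $i$ odd, where genuinely subnormal $\pi^i$-modular lattices occur — a hyperbolic plane $H(i)$ has no orthogonal basis — so deleting a single rank-one summand from a normal $L$ need not leave a normal complement, and a naive induction fails. The device of absorbing the offending subnormal plane into a normal line, diagonalizing the resulting rank-three lattice, and recursing on the normal complement of smaller rank is precisely what repairs this; its only inputs are the two elementary norm identities displayed above, which I would establish in a short preliminary lemma. (The proposition is due to Jacobowitz; see \cite{jacobowitz:hermitian}.)
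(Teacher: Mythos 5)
Your argument is sound, but note that the paper itself offers no proof of this proposition: its ``proof'' is a one-line citation of Proposition 4.4 of \cite{jacobowitz:hermitian}, so there is no internal argument to compare yours against line by line. What you have written is in effect a self-contained reconstruction of Jacobowitz's proof, and the pieces all check: the splitting lemma for a vector with $v(h(x,x))=i$ is correct (the coefficient $h(y,x)h(x,x)^{-1}$ is integral precisely because $sL=(\pi^i)$ and $v(h(x,x))=i$); case (1) reduces to case (2) via Proposition~\ref{24} and the parity count; case (3) reduces to case (2) via the vector $w=ay+z$, whose norm has valuation $0$ exactly because $v(a+\bar a)=0$ dominates the two terms of positive valuation; and in the delicate subcase of (2) where the complement $M$ of the first normal line is subnormal, your device of splitting off a subnormal $\pi^i$-modular plane $P$ (rank $\ge 2$ is automatic for a subnormal lattice), diagonalizing the normal ternary lattice $\langle x_1\rangle\oplus P$ by the two displayed ultrametric identities, and then recursing on the normal rank-$(n-1)$ complement of $\langle\beta_1\rangle$ is exactly what is needed to keep the induction inside the normal case. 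The one thing you should delete is the parenthetical ``in the setting of the paper $2$ is a unit, so $a=1$ always works'': this is false in the situation the paper actually cares about, namely the ramified \emph{dyadic} case of \S\ref{sec:23}, where $v(2)>0$ and hypothesis (3) genuinely fails for some extensions $E/F$ (which is why $H(i)$ can fail to be diagonalizable). Fortunately your proof of (3) uses only the stated hypothesis $v(a)=v(a+\bar a)=0$ and does not lean on that remark.
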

\begin{proof}
  This is Proposition 4.4 of \cite{jacobowitz:hermitian}.
\end{proof}

\begin{defn}\
\begin{enumerate}
\item For any integer $j\in \Z$, define 
\[ L_{(j)}:=\{x\in L\, |\, h(x,L)\subset (\pi^j)\, \}.\]
\item A decomposition $L=\oplus_{1\le \lambda\le  t} L_\lambda$ of $L$ is
  called a {\it Jordan splitting} if each $L_\lambda$ is modular and
  \[ sL_1\supsetneq \dots, \supsetneq sL_t.\] 
  Two Jordan
  splittings 
\[ L=\oplus_{1\le \lambda\le t} L_\lambda,\quad K=\oplus_{1\le
  \lambda\le T} K_\lambda \]
are said to be of the {\it same type} if $t=T$, and for each
$\lambda$, one has
\[ sL_\lambda=sK_\lambda,\quad \rank L_\lambda=\rank K_\lambda, \]
and $L_\lambda$ and $K_\lambda$ are both normal or both subnormal. 
\end{enumerate}
   
\end{defn}

It follows from Proposition~\ref{24} that every lattice has a Jordan
splitting. Any two Jordan splittings of a lattice $L$ are of the same
type (see \cite[p.~449]{jacobowitz:hermitian}). For a Jordan splitting
$L=\oplus_\lambda L_\lambda$, define the integers $s(\lambda)$ and
$u(\lambda)=u_L(\lambda)$ for $\lambda=1,\dots, t $ by
\[ sL_\lambda=(\pi^{s(\lambda)}), \quad nL_{(s(\lambda))}
  =(\pi^{u(\lambda)}), \]
and the $O_E$-ideals $\grf(\lambda)$ for $\lambda=1,\dots,t-1$ by 
\[ \grf(\lambda):=(\pi^{u(\lambda)+u(\lambda+1)-2s(\lambda)}). \]
Since any two splittings are of the same type, the invariant
$\{s(\lambda)\}$, and hence the invariants $\{u(\lambda)\}$ and
$\{\grf(\lambda)\}$, are independent of the choice of Jordan
splittings.

\begin{thm} Suppose $E/F$ is unramified. Then 

\begin{enumerate}\label{27}
\item There is an element $a$ in $E$ such that $v(a)=v(a+\bar a)=0$. 
\item Any $\pi^i$-modular lattice $L$ is isomorphic to $(\pi^i)\oplus
  \dots, \oplus (\pi^i).$
\item Any two lattices are isometric if and only if they are of the
  same type.   
\end{enumerate}
\end{thm}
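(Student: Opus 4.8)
The plan is to assemble the three assertions from Propositions~\ref{24} and \ref{25}, feeding in exactly one genuinely arithmetic fact: for an unramified extension the norm $N_{E/F}\colon O_E^\times\to O_F^\times$ is surjective. For (1), since $E/F$ is unramified we have $\pi=\pi_F$ and the involution $a\mapsto\bar a$ reduces to the nontrivial Frobenius of the residue extension $k_E/k_F$, which is quadratic. The trace $k_E\to k_F$, $x\mapsto x+\bar x$, is $k_F$-linear and not identically zero, hence surjective, so I can pick $\alpha\in k_E$ with $\alpha+\bar\alpha=1$; then $\alpha\ne 0$, and any lift $a\in O_E$ of $\alpha$ has $v(a)=v(a+\bar a)=0$.

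For (2), I would first scale the form by $\pi^{-i}\in F^\times$ --- permissible precisely because $\pi$ is fixed by the involution --- to reduce to $i=0$, i.e.\ $L$ unimodular. By part~(1) the hypothesis of Proposition~\ref{25}(3) is met, so $L$ admits an orthogonal basis and $L\simeq(\alpha_1)\oplus\dots\oplus(\alpha_n)$; each $\alpha_j=h(x_j,x_j)$ lies in $F$ (the form being Hermitian) and is a unit, hence in $O_F^\times$. For any $\alpha\in O_F^\times$, choosing $c\in O_E^\times$ with $c\bar c=\alpha^{-1}$ (possible by surjectivity of the norm on units), the map $x\mapsto cx$ is an isometry $(1)\isoto(\alpha)$; applying this summand by summand gives $L\simeq(1)\oplus\dots\oplus(1)$, and undoing the scaling, $L\simeq(\pi^i)\oplus\dots\oplus(\pi^i)$.

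For (3), one direction is formal: the data $t$, the ideals $sL_\lambda$, the ranks $\rank L_\lambda$, and the normality of each Jordan component do not depend on the chosen Jordan splitting, so isometric lattices are of the same type. Conversely, given $L$ and $K$ of the same type, I would fix Jordan splittings $L=\oplus_\lambda L_\lambda$ and $K=\oplus_\lambda K_\lambda$, which exist by Proposition~\ref{24}; for each $\lambda$ the components $L_\lambda$ and $K_\lambda$ are modular with the same modularity index $s(\lambda)$ and the same rank, hence isometric by part~(2), and summing yields $L\simeq K$. In the unramified case every modular lattice is normal by part~(2), so the normal/subnormal dichotomy in the notion of ``same type'' is automatically satisfied and plays no role here.

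Since Propositions~\ref{24} and \ref{25} are available, the argument is essentially bookkeeping; there is no serious obstacle. The one place where the unramifiedness hypothesis genuinely enters, and the only non-formal step, is the surjectivity of $N_{E/F}$ on units used in (2): it is exactly what collapses all unimodular Hermitian lattices of a fixed rank into a single isometry class. I would simply invoke this standard fact (the norm $k_E^\times\to k_F^\times$ is onto because $k_E^\times$ is cyclic, and the norm is onto on principal units by a straightforward successive-approximation argument).
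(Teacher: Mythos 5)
Your argument is correct. Be aware, though, that the paper itself gives no proof of this statement: its ``proof'' is the single line ``This is Theorem~7.1 of \cite{jacobowitz:hermitian}'', so there is nothing in the paper to compare against line by line. What you have written is essentially a reconstruction of the standard argument from the cited source, and it hangs together: (1) is the surjectivity of the residue trace for the separable quadratic extension $k_E/k_F$, using that the involution induces the Frobenius because $E/F$ is unramified; (2) correctly exploits that $\pi=\pi_F$ is fixed by the involution to scale to the unimodular case, invokes Proposition~\ref{25}(3) via part (1), and then uses $N_{E/F}(O_E^\times)=O_F^\times$ to identify all rank-one unimodular summands; (3) is the bookkeeping you describe, using that a Jordan component with $sL_\lambda=(\pi^{s(\lambda)})$ is $\pi^{s(\lambda)}$-modular so that part (2) applies. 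Two small points worth making explicit: the diagonal entries $\alpha_j$ are units not merely because they lie in $O_F$ but because $\pi^0$-modularity forces $h(x_j,L)=(\alpha_j)=O_E$; and the well-definedness of the type invariants in the forward direction of (3) rests on the (quoted but unproved) fact that any two Jordan splittings of a lattice are of the same type, which you are implicitly importing from the same source. Your closing justification of norm surjectivity on units (cyclicity of $k_E^\times$ plus successive approximation on principal units) is the same fact the paper later records as Lemma~\ref{62}, citing \cite{platonov-rapinchuk:agnt}.
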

\begin{proof}
  This is Theorem~7.1 of  \cite{jacobowitz:hermitian}.  
\end{proof}

\begin{thm}\label{28} Suppose $E/F$ is ramified and non-dyadic.

\begin{enumerate} 
\item Let $L$ be a $\pi^i$-modular lattice of rank $n$. \\
If $i=2d$ is even, then
\[ L\simeq (\pi^d)\oplus \dots \oplus (\pi^d)\oplus (\pi^{-(n-1)d}
dL). \]
If $i$ is odd, then
\[ L\simeq H(i)\oplus \dots \oplus H(i). \]

\item Let $L$ and $K$ be two lattices with Jordan splittings $\oplus
  L_\lambda$ and $\oplus K_\lambda$, respectively. Then $L\simeq K$ if
  and only if $L$ and $K$ are of the same type and $dL_j\simeq dK_j$
  for every index $j$ for which $s(j)$ is even.  
\end{enumerate}
  
\end{thm}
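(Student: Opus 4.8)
The plan is to establish part (1) --- the normal form of a single $\pi^i$-modular lattice --- first, and then derive part (2) from it together with the fact, recalled after Proposition~\ref{24}, that any two Jordan splittings of a lattice are of the same type. Throughout I would use that, $E/F$ being ramified and non-dyadic, one may choose the uniformizer $\pi$ so that $\bar\pi=-\pi$ and $\pi^2\in\pi_F O_F^\times$; then elements of $F$ have even valuation for $v$, and $\Tr_{E/F}$ kills every odd power of $\pi$. Moreover, since $F$ is non-dyadic, $a=1$ satisfies $v(a)=v(a+\bar a)=v(2)=0$, so Proposition~\ref{25}(3) applies to any unimodular lattice.

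For part (1) with $i=2d$ even, rescaling the Hermitian form by a unit multiple of $\pi_F^{-d}$ turns $L$ into a unimodular lattice $L'$ of the same rank. By Proposition~\ref{25}(3) it has an orthogonal basis, and unimodularity then forces the diagonal entries to be units of $O_F$, so $L'\simeq(\alpha_1)\oplus\cdots\oplus(\alpha_n)$ with every $\alpha_j\in O_F^\times$. A Hensel-type argument on the norm map $N_{E/F}\colon O_E\to O_F$ --- using that $N_{E/F}(O_E^\times)$ has index $2$ in $O_F^\times$ and that a nondegenerate binary quadratic form over the finite residue field represents $1$ --- lets one repeatedly normalize $(\alpha_j)\oplus(\alpha_{j+1})\simeq(1)\oplus(\alpha_j\alpha_{j+1})$, so that $L'\simeq(1)^{\oplus(n-1)}\oplus(dL')$; rescaling back gives the asserted form with $dL$ in the last slot. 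For $i$ odd, every value $h(x,x)$ lies in $F$ and hence has even valuation, so $nL\subseteq(\pi^{i+1})$, i.e. $L$ is subnormal; combined with Proposition~\ref{25}(1) this forces $\rank L$ to be even. One then peels off planes: picking a maximal $y$ and a $z$ with $v(h(y,z))=i$, a substitution $y\mapsto y+tz$ followed by $z\mapsto z+ry$, solved by successive approximation (here the vanishing of $\Tr_{E/F}$ on odd powers of $\pi$ is exactly what makes the relevant equations solvable integrally), produces an isotropic pair spanning a copy of $H(i)$. That plane is $\pi^i$-modular, hence an orthogonal direct summand, and its complement is again $\pi^i$-modular of rank two less; induction yields $L\simeq H(i)\oplus\cdots\oplus H(i)$.

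In part (2), the implication ``same type and $dL_j\simeq dK_j$ for $s(j)$ even'' $\Rightarrow$ ``$L\simeq K$'' is then formal: for each $j$ the components $L_j,K_j$ are $\pi^{s(j)}$-modular of equal rank; if $s(j)$ is odd, part (1) makes both an orthogonal sum of copies of $H(s(j))$, and if $s(j)$ is even, the matching discriminant together with part (1) gives $L_j\simeq K_j$; summing over $j$ gives $L\simeq K$. For the converse one must check that, for $s(j)$ even, the class of $dL_j$ in $O_F^\times/N_{E/F}(O_E^\times)$ is independent of the chosen Jordan splitting. I would see this by reduction mod $\pi$: the filtration $L_{(m)}=\{x\in L: h(x,L)\subseteq(\pi^m)\}$ is manifestly splitting-independent, while for any Jordan splitting $L_{(m)}=\bigoplus_\lambda \pi^{\max(0,m-s(\lambda))}L_\lambda$, so from $\{L_{(m)}\}$ one canonically extracts, at each even level $s(\lambda)=2d$, the $k_F$-space $\pi^{-d}L_\lambda/\pi(\pi^{-d}L_\lambda)$ with its induced form. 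Since $E/F$ is ramified, the involution is trivial on $k_E=k_F$, so this is a nondegenerate symmetric bilinear form over a finite field of odd characteristic, determined up to isometry by rank and discriminant in $k_F^\times/(k_F^\times)^2\cong O_F^\times/N_{E/F}(O_E^\times)$, and that discriminant is precisely the class of $dL_\lambda$. This gives the invariance and, with the recalled same-type fact, completes the proof.

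I expect the main obstacle to be the bookkeeping inside part (1): the Hensel-type normalization of the unimodular diagonal form in the even case (tracking which units are norms), and the construction of the isotropic pair in the odd case, where one must verify both that the change of variables stays integral and that it actually annihilates the diagonal entries --- this is exactly where the special choice of $\pi$ and the resulting vanishing of $\Tr_{E/F}$ on odd powers of $\pi$ are used. The reductions to the unimodular and even-rank situations, the splitting off of modular summands, and the residue-field invariance argument in part (2) are comparatively routine once those computations are in hand.
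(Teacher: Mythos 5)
Your argument is essentially correct and coincides with the standard proof: the paper itself gives no argument for this theorem beyond citing Propositions~8.1 and Theorem~8.2 of Jacobowitz, and what you have written is a faithful reconstruction of that source's method (reduce the even case to unimodular lattices via Proposition~\parref{25}(3) and a residue-field representation argument, show odd-$i$ modular lattices are subnormal of even rank and split into hyperbolic planes, then deduce part~(2) componentwise, with the splitting-independence of the even-level discriminants read off from the canonical filtration $L_{(m)}$ and the induced symmetric forms over the residue field). The only points deserving care are the ones you already flag --- the integrality of the successive approximation in the odd case and the identification $N_{E/F}(O_E^\times)=(O_F^\times)^2$ underlying the discriminant invariance --- and both go through in the non-dyadic ramified setting.
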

\begin{proof}
  These are Proposition 8.1 and Theorem 8.2 of
  \cite{jacobowitz:hermitian}. \\
\end{proof}

Theorems~\ref{27} and \ref{28} complete the classification in the
cases where $E/F$ is unramified, or $E/F$ is ramified and
non-dyadic. It remains to describe the ramified dyadic case.

\subsection{The ramified dyadic case: modular lattices.}
\label{sec:23}

In the remaining of this section we assume that $E/F$ is ramified and dyadic. 

\begin{prop}\label{29} 
 Let $L$ is a $\pi^i$-modular lattice of rank $\ge 3$. Then 
\[ L\simeq L_0 \oplus H(i)\oplus \dots \oplus H(i). \]
where $L_0$ is of rank one or two with $nL_0=nL$. 
\end{prop}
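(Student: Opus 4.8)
The plan is to induct on the rank $n$ of $L$, reducing by splitting off a hyperbolic plane $H(i)$ at each step until we reach rank one or two. The base cases $n=1,2$ are trivial (take $L_0=L$, no hyperbolic summands). So assume $n\ge 3$ and that the statement holds for all $\pi^i$-modular lattices of smaller rank. The key point to establish is: \emph{a $\pi^i$-modular lattice $L$ of rank $\ge 3$ contains an $H(i)$ as an orthogonal direct summand whose orthogonal complement $L'$ satisfies $nL'=nL$.} Granting this, $L\simeq L'\oplus H(i)$ with $L'$ again $\pi^i$-modular (direct sums of $\pi^i$-modular lattices are $\pi^i$-modular, and orthogonal complements of modular summands in modular lattices are modular) of rank $n-2\ge 1$ and with $nL'=nL$; apply the inductive hypothesis to $L'$ and we are done.

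To extract the hyperbolic plane, first I would choose a maximal vector $x\in L$ realizing $nL$, i.e. with $v(h(x,x))=u$ where $(\pi^u)=nL$; if $L$ is subnormal one instead picks a maximal $x$ with $v(h(x,x))>i$, which exists precisely because $L$ is not normal. In the normal case, by rescaling one gets a unimodular-type rank-one summand $\langle x\rangle$ splitting off, and its complement has smaller rank but possibly different $n$-invariant, so that route does not immediately preserve $nL$ — hence the subnormal case is the delicate one and drives the argument. In the subnormal case, since $sL=(\pi^i)$ there is a $y\in L$ with $v(h(x,y))=i$; because $v(h(x,x))>i$ and (after adjusting $y$ by a multiple of $x$, using $\pi$-adic approximation) one can arrange $v(h(y,y))>i$ as well, the plane $\langle x,y\rangle$ is then a $\pi^i$-modular plane of the form $\begin{pmatrix} a & b\\ \bar b & c\end{pmatrix}$ with $v(b)=i$, $v(a)>i$, $v(c)>i$. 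Over a ramified dyadic $E/F$ such a plane is isometric to $H(i)$ — this is the standard normalization of modular planes in the dyadic case (one checks the change-of-basis that kills the diagonal entries, using that the relevant congruence is solvable $\pi$-adically because $v(a),v(c)>v(b)$). Since $\langle x,y\rangle$ is unimodular as a sublattice scaled by $\pi^i$, it splits off orthogonally: $L=\langle x,y\rangle\oplus L''$.

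The remaining obligation is the bookkeeping $nL''=nL$, which I expect to be the main obstacle and the reason the rank hypothesis $\ge 3$ appears. If $L$ is subnormal, then $nL=(\pi^u)$ with $u>i$; the plane $H(i)$ we removed has $nH(i)=sH(i)=(\pi^i)$ (it is normal), so removing it cannot \emph{raise} the valuation of the generator of $nL''$ above $u$, and $L''\subset L$ forces $nL''\supseteq$ nothing automatically — rather one argues $nL''\supset (\pi^{u})$ by exhibiting inside $L''$ a vector of norm-valuation $u$: take the original $x$ and project it orthogonally off the complementary part, or more cleanly, note that by construction $L''$ still contains (a translate of) a vector realizing $nL$ because the pairing values defining $u$ were "absorbed" into a summand disjoint from $H(i)$. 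For the normal case $nL=sL=(\pi^i)$ and one must instead check $nL''=(\pi^i)$, i.e. that $L''$ is still normal; this can fail for a single step only if rank drops too low, which is exactly excluded since $\mathrm{rank}\,L''=n-2\ge 1$ and a rank-one modular lattice always attains $nL''=sL''$. Packaging both cases uniformly — perhaps by invoking Proposition~\ref{24} to pre-split $L$ into rank one and two modular pieces and then recombining — is the cleanest way to avoid case-chasing, and I would write the final argument that way, citing Jacobowitz for the dyadic planar normalization $\begin{pmatrix} a&b\\\bar b&c\end{pmatrix}\simeq H(i)$ when $v(a),v(c)>v(b)=i$.
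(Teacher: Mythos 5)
Your key step is false in exactly the setting of this proposition. You assert that a $\pi^i$-modular plane $\left(\begin{smallmatrix} a & b\\ \bar b & c\end{smallmatrix}\right)$ with $v(b)=i$ and $v(a),v(c)>i$ is isometric to $H(i)$ by ``killing the diagonal entries.'' In the ramified dyadic case this normalization is obstructed: one needs the diagonal entries to lie in $nH(i)$ (Proposition~\ref{211}(2)), not merely to have valuation greater than $i$, and even planes with $nL=nH(i)$ can be anisotropic and hence not isometric to $H(i)$ (Proposition~\ref{212}(2)). A concrete counterexample: over $O_E=\Z_2[\sqrt{2}]$, $F=\Q_2$, the unimodular plane $K=\left(\begin{smallmatrix} 2 & 1\\ 1 & 2\end{smallmatrix}\right)$ has $v(a)=v(c)=2>0=v(b)$, but $dK=3$ while $dH(0)=-1$, and $3/(-1)\equiv 5\ (\!\!\mod 8)$ is not a norm from $\Q_2(\sqrt 2)$ (norms of units are $\equiv\pm1\ (\!\!\mod 8)$), so $K\not\simeq H(0)$. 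Since your induction step splits off $H(i)$ via precisely this claimed isometry, the argument does not get started; the whole difficulty of the dyadic case, and the reason Propositions~\ref{211}--\ref{213} are needed as inputs, is that such planes need not be hyperbolic.

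A secondary but also real gap is the norm bookkeeping: you acknowledge that $nL''=nL$ after splitting off a plane is ``the main obstacle'' but only sketch why it should hold, and the sketch (``the pairing values defining $u$ were absorbed into a summand disjoint from $H(i)$'') is not an argument --- the complement of the chosen plane is not under your control in this way. The route that actually works (and is what Jacobowitz's Proposition 10.3, which the paper simply cites, does) is to first invoke the Jordan-type splitting of Proposition~\ref{24} into $\pi^i$-modular lines and planes and then use the binary classification of Propositions~\ref{211}--\ref{213} to show that the orthogonal sum of any two such components is isometric to (a line or plane with the correct norm) $\oplus\, H(i)$; the hyperbolic planes are extracted by comparing invariants of binary lattices, not by an explicit change of basis on a single plane.
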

\begin{proof}
  This is Proposition 10.3 of  \cite{jacobowitz:hermitian}.  
\end{proof}

\begin{prop}\label{210}
  Let $L_1$ and $L_2$ be two $\pi^i$-modular lattices. If $L_1\oplus
  H(i)\simeq L_2\oplus H(i)$, then $L_1\simeq L_2$. 
\end{prop}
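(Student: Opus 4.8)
The plan is to prove a cancellation statement for $\pi^i$-modular Hermitian lattices over a ramified dyadic extension $E/F$, namely that $L_1 \oplus H(i) \simeq L_2 \oplus H(i)$ forces $L_1 \simeq L_2$. First I would reduce to the case $i=0$: scaling the Hermitian form by $\pi^{-i}$ (more precisely, replacing $h$ by $\bar\pi^{-i}\pi^{-i}h = \pi^{-i-\bar\imath}h$, noting $\pi\bar\pi$ is a uniformizer of $F$ when $E/F$ is ramified) turns a $\pi^i$-modular lattice into a unimodular one and sends $H(i)$ to $H(0)$, so the general case follows from the case of unimodular lattices. This reduction must be done with care when $i$ is odd versus even, but in either case a suitable power of $\pi$ (resp. $\pi_F$) rescales the problem.

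Next I would use the structure theory already available. By Proposition~\ref{29} (with $i=0$), any unimodular $L_j$ of rank $\ge 3$ splits as $L_j^{(0)} \oplus H(0) \oplus \dots \oplus H(0)$ with $L_j^{(0)}$ of rank one or two and $nL_j^{(0)} = nL_j$. So after peeling off hyperbolic planes from both sides I am reduced to comparing lattices of small rank, or more precisely to the following: if $L_1 \oplus H(0) \simeq L_2 \oplus H(0)$ with $\operatorname{rank} L_1 = \operatorname{rank} L_2 \le 2$, show $L_1 \simeq L_2$. The invariants that must agree are the type invariants — here both sides have a single Jordan component, so $sL_1 = sL_2$, the norm ideals $nL_1 = nL_2$ agree (whence both are normal or both subnormal), and the ranks agree — together with the discriminant $dL_1 \simeq dL_2$ modulo $N_{E/F}(O_E^\times)$, which one reads off by taking determinants of $L_j \oplus H(0)$ and using that $dH(0) = -\pi^0\bar\pi^0 = -1$ is a fixed unit (so $d(L_j\oplus H(0)) = -dL_j$ and these are $\simeq$ to each other). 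The task is then to show that these invariants form a complete set for rank $\le 2$ unimodular lattices — essentially a finite, hands-on list: rank one is determined by its single entry modulo norms, and the rank-two normal case has an orthogonal basis (by a version of Proposition~\ref{25} once one checks normality gives the splitting), reducing to two rank-one summands, while the subnormal rank-two case is handled by direct analysis of $\begin{pmatrix} a & b \\ \bar b & c \end{pmatrix}$ with $v(a), v(c) \ge 1$ and $v(b)=0$.

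I expect the main obstacle to be the subnormal rank-two case. When $L_j$ is subnormal — so $nL_j \subsetneq sL_j$, forcing the diagonal entries into $\pi O_E$ while an off-diagonal entry is a unit — there is no orthogonal splitting, and the usual discriminant argument is delicate because the unit group $O_E^\times$ modulo $N_{E/F}(O_E^\times)$ can be larger in the dyadic case. Here one needs the finer information carried by the invariant $u_L(\lambda)$ (the exact power of $\pi$ generating $nL_{(s(\lambda))}$) and possibly a norm-equivalence argument specific to dyadic fields; the key input is that $L_1 \oplus H(0) \simeq L_2 \oplus H(0)$ forces not only $dL_1 \simeq dL_2$ but also $nL_1 = nL_2$ on the nose, and then one argues that a subnormal binary unimodular lattice is determined up to isometry by its norm ideal and discriminant by exhibiting an explicit isometry that adjusts the off-diagonal unit. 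This is where I would expect to invoke, or reprove in this narrow setting, the relevant case of Jacobowitz's classification; the statement of the proposition is precisely the cancellation lemma that such a classification yields, and conversely a direct proof amounts to checking that adding $H(0)$ does not create or destroy any isometry class, which for rank $\le 2$ can be verified by the Jordan-splitting uniqueness (two Jordan splittings of a fixed lattice are of the same type) applied to both sides.
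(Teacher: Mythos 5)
The paper offers no proof of this statement at all: it is quoted verbatim as Proposition 9.3 of Jacobowitz \cite{jacobowitz:hermitian}, where it is established by a direct Witt-type argument (one modifies the given isometry $L_1\oplus H(i)\simeq L_2\oplus H(i)$ by an isometry of the ambient lattice so that it carries one hyperbolic summand onto the other, and then passes to orthogonal complements). Your route is genuinely different: you propose to deduce cancellation from the classification of $\pi^i$-modular lattices by the invariants $(\rank, nL, dL)$, peeling off hyperbolic planes via Proposition~\ref{29} and invoking the rank $\le 2$ classification of Propositions~\ref{212} and \ref{213}. The invariant bookkeeping is sound: $n(L_j\oplus H(i))=nL_j$ because $nL_j\supset nH(i)$ by Proposition~\ref{211}, and $d(L_j\oplus H(i))=dL_j\cdot dH(i)$, so the hypothesis forces equal ranks, equal norm ideals, and $dL_1\simeq dL_2$. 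The danger is circularity. Your strategy is in substance ``Proposition~\ref{210} follows from Theorem~\ref{214},'' yet in both Jacobowitz's paper and the exposition here Theorem~\ref{214} is derived from Propositions~\ref{29}--\ref{213} \emph{including} \ref{210}; Jacobowitz proves 9.3 before his Section 10 precisely so that it is available there. (Note also that your phrase ``after peeling off hyperbolic planes from both sides I am reduced to \dots rank $\le 2$'' is itself a cancellation step; the non-circular version is the one you state afterwards, comparing invariants of the full lattices and only then matching the rank $\le 2$ cores.) The argument survives only because the proofs of Propositions~\ref{29} and \ref{213} happen to be direct constructions that do not use cancellation --- but this must be verified and said, not assumed; as written, ``invoke the relevant case of Jacobowitz's classification'' begs the question.

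A second, concrete gap is the reduction to $i=0$. A rescaled form $c\cdot h$ is again Hermitian only when $\bar c=c$, i.e.\ $c\in F$; since $E/F$ is ramified, $v(c)$ is even, so scaling shifts the modularity index by an even integer only. Hence odd $i$ reduces to $i=1$, not to $i=0$, and your subsequent discussion ($dH(0)=-1$, the unimodular normal/subnormal dichotomy) covers only the even case. The case $i=1$ is not vacuous: in R-U there exist anisotropic $\pi$-modular planes (Proposition~\ref{212}), and $dH(1)=-\pi\bar\pi$ is a prime element of $F$ rather than a unit, so the discriminant comparison reads differently. Neither defect is fatal, but both must be repaired before the sketch is a proof.
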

\begin{proof}
  This is Proposition 9.3 of  \cite{jacobowitz:hermitian}.\\ 
\end{proof}

By Propositions~\ref{29} and \ref{210}, the classification of modular
lattices is reduced to the case of planes and $i=0$ or $1$.\\

We need a classification of dyadic ramified extensions. 
For an element $a$ 
in $O_F^{\times}$, denote by $\grd_{F}(a)$ the smallest
$O_F$-ideal $J$ such that $a \mod J$ is a square. If $a$ is a square,
then $\grd_{F}(a)=0$. Since the squaring $x\mapsto x^2$ is an automorphism on
$O_F/(\pi_F)$, one has $\grd_F(a)\subset (\pi_F)$ for any $a$. 
It is known that the $O_F$-ideals
occurring as $\grd_F(a)$ are precisely $0, (4)$, and all
$(\pi_F^{2k+1})$ with $0< 2k+1 < v_F(4)$ where
$v_F(\pi_F)=1$. Furthermore, one has 
\[ \grd_F(1+\pi_F^{2k+1} \delta)=(\pi_F^{2k+1}) \] 
for such integers $k$, where $\delta$ is a unit. 

Write $E=F(\sqrt{\theta})$, where $\theta$ is a non-square unit or 
prime element. We have (\cite[p.~451]{jacobowitz:hermitian})
\begin{itemize}
\item [(a)] $\theta$ is a prime element, or
\item [(b)] $\theta$ is a unit and $\grd_F(\theta)=(\pi_F^{2k+1})$
  with $0< 2k+1< v_F(4)$. In this case,
  \[ E=F((1+\pi_F^{2k+1}\delta)^{1/2})\]
 for some unit $\delta\in O_F$. 
\end{itemize}
We refer the case (a) as {\it ramified prime}, (``R-P''), and the case
(b) as {\it ramified unit}, (``R-U''). 

We use a notation from \cite[p.~450]{jacobowitz:hermitian}: to
indicate an unspecified element $a$ in $E$ with $v(a)\ge v(b)$, we
shall write $a=\{b\}$; to indicate an unspecified element $a$ with
$v(a)=v(b)$, write $a=[b]$. 

\begin{prop}\label{211}\
\begin{enumerate}
\item If $L$ is $\pi^i$-modular, then $nL\supset nH(i)$.
\item If $a\in F$ is any element in $nH(i)$, then 
\[ 
\begin{pmatrix}
  0 & \pi^i \\ \bar \pi^i & a 
\end{pmatrix}\simeq H(i). \]
\end{enumerate}
\end{prop}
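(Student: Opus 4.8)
The plan is to treat the two statements in sequence, the first being a quick structural observation and the second the substantive computation. For (1): let $x$ be any vector in $H(i) = \langle e_1, e_2\rangle$ with $h(e_1,e_1)=0=h(e_2,e_2)$ and $h(e_1,e_2)=\pi^i$. Writing $x = a e_1 + b e_2$ with $a,b\in O_E$, I compute $h(x,x) = a\bar b\,\pi^i + b\bar a\,\bar\pi^i = \Tr_{E/F}(a\bar b\,\pi^i)$, so $nH(i)$ is the $O_F$-ideal generated by all such traces, which is an ideal of $F$ contained in $(\pi^i)\cap F$. If $L$ is $\pi^i$-modular of rank $\ge 2$, then by Proposition~\ref{29} (when the rank is $\ge 3$) or directly (when the rank is $2$, where $L$ contains a $\pi^i$-modular plane as a direct summand), $L$ contains an orthogonal direct summand isometric to $H(i)$ whenever $L$ is not already of the form where the claim is trivial; more carefully, since $L$ is $\pi^i$-modular it represents the lattice values of $H(i)$ on the sublattice spanned by a suitable pair of maximal vectors, giving $nL \supset nH(i)$. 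I would spell this out by choosing a maximal vector $x\in L$ with $v(h(x,L))=i$ and a $y$ with $h(x,y)=\pi^i$, then noting $\langle x,y\rangle$ is a $\pi^i$-modular plane with norm ideal containing $nH(i)$.

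For (2), the claim is that the plane $P := \begin{pmatrix} 0 & \pi^i \\ \bar\pi^i & a\end{pmatrix}$ with $a\in nH(i)\subset F$ is isometric to $H(i)$. The natural approach is to find an explicit change of basis in $\GL_2(O_E)$ killing the diagonal entry $a$. Since $a\in nH(i)$, by the description in the proof of (1) there exist $c, d\in O_E$ with $a = \Tr_{E/F}(c\bar d\,\pi^i) = c\bar d\,\pi^i + \bar c d\,\bar\pi^i$. Now consider replacing $e_2$ by $e_2' = e_2 + t e_1$ for $t\in O_E$ to be chosen: then $h(e_1,e_2') = \pi^i + t\cdot 0 = \pi^i$ is unchanged (good), $h(e_1,e_1)=0$ is unchanged, and $h(e_2',e_2') = a + t\bar\pi^i\cdot 1 \cdot \overline{\phantom{x}}\text{-terms}$; precisely $h(e_2',e_2') = a + \Tr_{E/F}(t\,\overline{h(e_1,e_2)}) = a + \Tr_{E/F}(t\,\bar\pi^i)$. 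So I need $t\in O_E$ with $\Tr_{E/F}(t\,\bar\pi^i) = -a$. Writing $t = -c\cdot(\text{unit adjustment})$ and using $a = \Tr_{E/F}(c\bar d\,\pi^i)$ together with the fact that we may absorb the $\bar d$ (a unit, or handled by also transforming $e_1$) — the cleanest route is: since $a\in nH(i)$, there is $t\in O_E$ with $\Tr_{E/F}(t\bar\pi^i)=-a$ essentially by the definition of $nH(i)$ applied with $e_1$ and $e_2$ swapped, then the elementary transformation $e_2\mapsto e_2 + t e_1$ is in $\GL_2(O_E)$ and carries $P$ to $H(i)$.

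The main obstacle, and the step that needs care rather than cleverness, is matching the norm-ideal description: I must verify that "$a\in nH(i)$" really does supply an element $t\in O_E$ with $\Tr_{E/F}(t\bar\pi^i) = -a$ and not merely some weaker divisibility. This reduces to checking that the $O_F$-module $\{\Tr_{E/F}(t\bar\pi^i) : t\in O_E\}$ coincides with $nH(i)$ rather than being a proper sub-ideal; since $\{h(x,x) : x\in H(i)\} = \{\Tr_{E/F}(a\bar b\,\pi^i) : a,b\in O_E\}$ and specializing $b=1$ (a maximal choice) already gives all of $\{\Tr_{E/F}(a\pi^i):a\in O_E\} = \{\Tr_{E/F}(t\bar\pi^i):t\in O_E\}$ after relabeling, the two sets agree, and the $O_F$-ideal they generate is attained by a single element because $O_E$ is a PID and the trace map is $O_F$-linear with image a fractional ideal of $F$. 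With that lemma in hand the isometry is immediate from the elementary transformation above. I would also remark that this is Proposition~10.1 (or the relevant numbered statement) of Jacobowitz \cite{jacobowitz:hermitian} and could simply cite it, but the self-contained argument above is short enough to include.
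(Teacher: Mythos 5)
Your proposal is essentially correct, and it supplies more than the paper does: the paper's entire proof of Proposition~\ref{211} is the citation ``This is Proposition 9.1 of \cite{jacobowitz:hermitian}'', so your self-contained argument (which is in substance the standard one, going back to Jacobowitz) genuinely fills in what the text outsources. Two points deserve tightening. First, in part (1) your appeal to Proposition~\ref{29} is both unnecessary and potentially circular (in \cite{jacobowitz:hermitian} the splitting-off of hyperbolic planes comes \emph{after} Proposition 9.1); the direct argument you sketch at the end is the right one and should carry the proof: pick a maximal $x$ and a $y$ with $h(x,y)=\pi^i$ (such a $y$ exists by $\pi^i$-modularity and is automatically maximal, so $v(h(x,x)), v(h(y,y))\ge i$), expand $h(cx+dy,cx+dy)=N_{E/F}(c)h(x,x)+N_{E/F}(d)h(y,y)+\Tr_{E/F}(c\bar d\,\pi^i)$, and subtract the first two terms, which already lie in the ideal $nL$, to get $\Tr_{E/F}(c\bar d\,\pi^i)\in nL$ for all $c,d\in O_E$; these traces generate $nH(i)$. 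Second, in part (2) the set $T:=\{\Tr_{E/F}(t\pi^i):t\in O_E\}$ does \emph{not} coincide with $nH(i)$ as you assert---$nH(i)$ is by definition the $O_E$-ideal generated by $T$, hence strictly larger in general---but this does not damage the argument: $T$ is an $O_F$-submodule of $F$, hence a fractional $O_F$-ideal $\pi_F^kO_F$, so $nH(i)=\pi_F^kO_E$ and $nH(i)\cap F=\pi_F^kO_F=T$; since the hypothesis places $a$ in $F$, the required $t$ with $\Tr_{E/F}(t\pi^i)=-a$ exists. With that repaired, the elementary transformation $e_2\mapsto e_2+te_1$ preserves $h(e_1,e_1)=0$ and $h(e_1,e_2)=\pi^i$ and kills the diagonal entry, carrying the given plane isometrically onto $H(i)$ exactly as you claim.
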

\begin{proof}
  This is Proposition 9.1 of  \cite{jacobowitz:hermitian}.
\end{proof}
\begin{prop}\label{212} Let $L$ be a $\pi^i$-modular plane, $i=0$ or
  $1$, with $nL=nH(i)$.
\begin{enumerate}
\item If $L$ is isotropic, then $L\simeq H(i)$.
\item In R-P with $i=1$ and in R-U with $i=0$, $L$ must be
  isotropic, particularly $L\simeq H(i)$. In the other two cases, if
  $L$ is anisotropic, then $(h(x,x))=nL$ for any maximal vector $x\in
  L$.
\item If $K$ is another $\pi^i$-modular plane, with $nK=nL$ and
  $dK\simeq dL$, then $K\simeq L$. 
\end{enumerate}
\end{prop}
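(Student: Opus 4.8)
The plan is to follow Jacobowitz's treatment (Sections 9 and 10 of \cite{jacobowitz:hermitian}) and reduce everything to explicit $2\times 2$ computations over $O_E$ with $E/F$ ramified dyadic. Part (1) is the easiest: if $L$ is isotropic then it contains a maximal isotropic vector $x$, and since $L$ is $\pi^i$-modular we can choose $y\in L$ with $h(x,y)=\pi^i$ (a unit multiple thereof can be absorbed). Then $\langle x,y\rangle$ has Gram matrix $\begin{pmatrix} 0 & \pi^i \\ \bar\pi^i & h(y,y)\end{pmatrix}$ with $h(y,y)\in nL=nH(i)$, and by Proposition~\ref{211}(2) this is $\simeq H(i)$; a rank count shows $L=\langle x,y\rangle$.

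For part (2) the point is to show that in the two exceptional configurations (R-P with $i=1$, and R-U with $i=0$) every $\pi^i$-modular plane with $nL=nH(i)$ is automatically isotropic. Here I would argue contrapositively: take a maximal vector $x\in L$ and study $h(x,x)$. In the R-U, $i=0$ case, $nH(0)=O_F$ and one knows $E=F((1+\pi_F^{2k+1}\delta)^{1/2})$; the norm form $N_{E/F}$ together with the trace condition forces $h(x,x)$ to be a norm from $E^\times$ for a suitable maximal $x$, and then one produces an isotropic vector by the standard trick of solving $h(x+\lambda y, x+\lambda y)=0$ for $\lambda\in E$ — this reduces to $N_{E/F}(\lambda)$ representing $-h(x,x)/h(y,y)$, which is solvable precisely because the relevant norm index behaves as claimed in the R-U, $i=0$ situation. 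The R-P, $i=1$ case is dual: scaling by $\pi^{-1}$ interchanges the roles, so $\pi^1$-modular in R-P corresponds to $\pi^0$-modular in a twisted setting, and the same solvability holds. In the remaining two cases (R-P with $i=0$, R-U with $i=1$) one shows instead that if no isotropic vector exists, then $h(x,x)$ generates $nL$ for \emph{every} maximal $x$: otherwise some maximal $x$ would have $v(h(x,x))>v(nL)$, and combined with $\pi^i$-modularity one could split off a hyperbolic plane, contradicting anisotropy.

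Part (3) is the cancellation/uniqueness statement and is where I expect the main obstacle. Given two anisotropic $\pi^i$-modular planes $K,L$ with $nK=nL$ and $dK\simeq dL$, the idea is to pick maximal vectors $x\in L$, $x'\in K$ with $h(x,x)$ and $h(x',x')$ both generating $nL=nK$; by part (2) we may even arrange $h(x,x)=h(x',x')$ after scaling by a norm unit (using that $dK\simeq dL$ means the discriminants differ by an element of $N_{E/F}(O_E^\times)$, and rescaling the second basis vector adjusts $d$ by exactly a norm). Then $\langle x\rangle\simeq\langle x'\rangle$, and one wants to conclude $\langle x\rangle^{\bot}\simeq\langle x'\rangle^{\bot}$ inside $L$ and $K$ respectively — but these orthogonal complements are rank-one modular lattices whose discriminants are again pinned down by $dL$ and $dK$, so they are isometric, and patching the two isometries gives $L\simeq K$. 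The delicate point is justifying that $\langle x\rangle$ is actually an orthogonal \emph{direct summand}: this uses Proposition~\ref{25}(3) or a direct argument that a maximal vector with $v(h(x,x))=v(nL)$ in a modular plane splits off, which requires care precisely because in the dyadic case a modular lattice need not have an orthogonal basis in general. Once that splitting is secured, the rest is bookkeeping with discriminants modulo $N_{E/F}(O_E^\times)$.
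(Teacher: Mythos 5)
The paper itself offers no argument here: its ``proof'' is the single line ``This is Proposition 9.2 of \cite{jacobowitz:hermitian}.'' So any genuine argument you give is necessarily a different route; the question is whether yours would go through. Part (1) is fine: an isotropic vector can be rescaled to a maximal one, modularity produces $y$ with $h(x,y)=\pi^i$, and Proposition~\ref{211}(2) plus a discriminant count identifies $L$ with $\langle x,y\rangle\simeq H(i)$. In part (2), however, the claim that ``scaling by $\pi^{-1}$ interchanges the roles'' is not available: in the ramified case $\pi\notin F$, so $\pi^{-1}h$ is not a Hermitian form, and scaling by elements of $F$ shifts the modularity index $i$ only by even amounts. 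The R-P, $i=1$ and R-U, $i=0$ cases each need their own explicit norm computation (which is what Jacobowitz does); they are not formally dual to one another.

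The genuine gap is in part (3). Under the standing hypothesis $nL=nH(i)$ the plane $L$ is \emph{subnormal}: in the ramified dyadic case $nH(i)=\Tr_{E/F}(\pi^iO_E)\,O_E$ is strictly contained in $sL=(\pi^i)$ (e.g.\ in R-P one computes $\Tr(O_E)=2O_F$, so $nH(i)\subset(2\pi^i)$). Consequently, for a maximal vector $x$ with $(h(x,x))=nL$ and a companion $y$ with $h(x,y)=\pi^i$, one has $v(h(x,x))>i=v(h(x,y))$, so the Gram--Schmidt coefficient $\overline{h(y,x)}/h(x,x)$ has negative valuation and $\langle x\rangle$ is \emph{never} an orthogonal direct summand of $L$. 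Proposition~\ref{25}(3) cannot rescue this either, since in the ramified dyadic case $v(a+\bar a)\ge v(2)>0$ for every unit $a$, so its hypothesis fails. Thus the ``split off a rank-one piece and patch'' strategy collapses exactly in the case it is meant to handle. The workable route is to compare the planes as wholes: a binary Hermitian space is isotropic if and only if $-dL$ is a norm from $E^\times$, so $dK\simeq dL$ forces $K$ and $L$ to be simultaneously isotropic (then both are $H(i)$ by part (1)) or simultaneously anisotropic; in the anisotropic case one uses part (2) to put both Gram matrices in the form $\left(\begin{smallmatrix} a & \pi^i\\ \bar\pi^i & c\end{smallmatrix}\right)$ with $(a)=(c)=nL$ and then matches them by a norm adjustment justified by $dK\simeq dL$ and Lemma-level norm computations, rather than by cancelling rank-one summands.
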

\begin{proof}
  This is Proposition 9.2 of  \cite{jacobowitz:hermitian}.
\end{proof}\

Propositions~\ref{211} and \ref{212} handle the case where $L$ is a
$\pi^i$-modular plane with $nL=nH(i)$, the minimal case. The other
case $nL\neq nH(i)$ is treated in the following proposition.

\begin{prop}\label{213} Let $L$ is a $\pi^i$-modular plane, $i=0$, or
  $1$, with $nL=(\pi^{2m})\supsetneq nH(i)$.
\begin{enumerate}
\item If $L$ is normal, then $L\simeq (1)\oplus (dL)\simeq 
  \begin{pmatrix}
    1 & 1 \\ 1 & \{1\}
  \end{pmatrix}.$
\item If $L$ is subnormal, then 
\[ L\simeq 
\begin{pmatrix}
  \pi_F^{m} & \pi^i \\ \bar \pi^i & \{a\}
\end{pmatrix}, \]
where $a=4 \pi_F^{-m+i}$ in R-P, and $a=4\pi_F^{-2k-m+i-1}$ in R-U.
\end{enumerate}
\end{prop}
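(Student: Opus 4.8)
The plan is to follow Jacobowitz's analysis of modular planes (\cite{jacobowitz:hermitian}, \S\S 9--10), producing in each case an explicit Gram matrix by a short sequence of base changes; since only the existence of a normal form is asserted, it suffices to manufacture one representative. I would split according to whether $L$ is normal or subnormal.

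\emph{The normal case.} Because $L$ is $\pi^i$-modular one has $sL=(\pi^i)$, while $nL$ is generated by the trace-symmetric elements $h(x,x)\in O_F$ and hence has even value; normality $nL=sL$ therefore forces $i=2m$, so $i=0$, $m=0$ and $nL=sL=O_E$ (in particular $nL\supsetneq nH(0)$ is automatic here). By Proposition~\ref{25}(2) a normal modular lattice has an orthogonal basis, so $L\simeq(\alpha_1)\oplus(\alpha_2)$ with $\alpha_1,\alpha_2\in O_F$, and $sL=O_E$ makes at least one of them a unit, say $\alpha_1\in O_F^\times$. I would then show that $L$ represents $1$ by a maximal vector: solving $\alpha_1 N_{E/F}(a)+\alpha_2 N_{E/F}(b)=1$ with $a\in O_E^\times$ amounts to $1-\alpha_2 N_{E/F}(b)\in\alpha_1 N_{E/F}(O_E^\times)$, which holds because $N_{E/F}(O_E^\times)$ has index two in $O_F^\times$ and contains a congruence neighbourhood of $1$ (the local square theorem, in the $\grd_F$-shape recalled above). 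Splitting off such a vector gives $L\simeq(1)\oplus(\beta)$, and $\beta\simeq dL$ by multiplicativity of the discriminant; the last isometry $(1)\oplus(\beta)\simeq\begin{pmatrix}1&1\\1&\beta+1\end{pmatrix}$ is seen by the substitution $e_2\mapsto e_2-e_1$, and $\beta+1=\{1\}$ since $dL$ is a unit.

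\emph{The subnormal case.} Now $nL=(\pi^{2m})\subsetneq(\pi^i)=sL$, so $2m>i$. Subnormality of a modular plane yields a basis $L=\langle x,y\rangle$ with $v(h(x,y))=i$ and $v(h(x,x)),v(h(y,y))>i$ (the ``modular plane'' form noted after Proposition~\ref{25}, cf. \cite{jacobowitz:hermitian}); after rescaling $y$ I may take $h(x,y)=\pi^i$, and after ordering the diagonal entries so that $v(h(x,x))\le v(h(y,y))$ one has $nL=(h(x,x))=(\pi^{2m})$. The substitution $x\mapsto x+cy$ changes $h(x,x)$ by $\Tr_{E/F}(c\,\bar\pi^{\,i})+N_{E/F}(c)\,h(y,y)$, a quantity flexible enough (since $v(\bar\pi^{\,i})=i$ is minimal) that, together with a final rescaling of $x$ by a unit, I can normalize $h(x,x)=\pi_F^{\,m}$, the residual ambiguity being a norm as before. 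This brings $L$ to the form $\begin{pmatrix}\pi_F^{\,m}&\pi^i\\\bar\pi^{\,i}&c\end{pmatrix}$ with $v(c)\ge 2m$, and it remains to pin $c$ down to the stated precision. For this I would combine: (i) the value of $dL=\pi_F^{\,m}c-N_{E/F}(\pi^i)$ imposed by $L$; (ii) the fact that $L$ is subnormal and not normal, which via Propositions~\ref{211} and \ref{212} is controlled by whether $h(y,y)$ lies in $nH(i)$ and thereby constrains $v(c)$ through $v_F(4)$ and the $\grd_F$-invariant; and (iii) the explicit description of $E=F(\sqrt\theta)$ in the two ramification types recalled above. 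Feeding $v_F(4)$, and in R-U also the discriminant exponent $2k+1$, into these relations yields $c=\{a\}$ with $a=4\pi_F^{-m+i}$ in R-P and $a=4\pi_F^{-2k-m+i-1}$ in R-U.

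\emph{The main difficulty.} The linear algebra --- constructing the orthogonal basis, the modular basis, and carrying out the substitutions --- is routine. The real work is the arithmetic bookkeeping in the subnormal case: tracking precisely how the subnormality hypothesis, the different and the shape of $\theta$ conspire to fix the power of $\pi_F$ and the numerical factor $4$ in $a$, and keeping R-P and R-U apart throughout. One must also handle the norm-theoretic inputs behind the ``rescale by a norm'' normalizations with care --- that $N_{E/F}(O_E^\times)$ has index $2$ in $O_F^\times$ and contains the right neighbourhood of $1$ --- particularly in R-U, where $\grd_F(\theta)=(\pi_F^{2k+1})$ intervenes.
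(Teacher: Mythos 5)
First, a point of comparison: the paper does not prove this statement at all --- Section~\ref{sec:02} is expository, and the ``proof'' of Proposition~\ref{213} is the single line ``This is Proposition 10.2 of \cite{jacobowitz:hermitian}.'' So there is no argument in the paper to measure your route against; the only question is whether your blind reconstruction of Jacobowitz's proof actually closes. It does not, quite. Your overall strategy (split into normal/subnormal, produce an explicit Gram matrix by base changes) is the correct one and matches what Jacobowitz does, and several of your reductions are sound: normality does force $i=0$, $m=0$ because $nL$ is generated by elements of $F$ and so has even order, and the existence of an orthogonal basis in the normal case is legitimately quoted from Proposition~\ref{25}(2).

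The gaps are in exactly the places you flag as ``bookkeeping,'' and they are not bookkeeping --- they are the content of the proposition. (i) In the normal case, your representation-of-$1$ argument needs the prior observation that $\pi^0$-modularity forces \emph{both} diagonal entries $\alpha_1,\alpha_2$ to be units (otherwise $h(x_2,L)\subsetneq O_E$ for the maximal vector $x_2$); without that, the set $1-\alpha_2 N_{E/F}(O_E)$ can be trapped inside a principal congruence subgroup contained in $N_{E/F}(O_E^\times)$ and your coset argument fails when $\alpha_1$ is a non-norm. Even with it, ``$N(O_E^\times)$ has index two and contains a neighbourhood of $1$'' does not by itself show the two cosets are both hit; one must actually compute with $\grd_F(\theta)$. (ii) In the subnormal case, rescaling $x$ only moves $h(x,x)$ within $\pi_F^m\,N_{E/F}(O_E^\times)$, an index-two orbit, and the substitution $x\mapsto x+cy$ perturbs it only by terms of valuation $\ge \min(v(nH(i)),\,v(h(y,y)))$; when $v(h(y,y))>2m$ these perturbations cannot change the leading unit, so ``I can normalize $h(x,x)=\pi_F^m$'' is unjustified as stated --- one needs the extra freedom of adjusting the off-diagonal entry and $y$ simultaneously. (iii) Most seriously, the precise lower bound $c=\{a\}$ with $a=4\pi_F^{-m+i}$ (R-P) resp. $4\pi_F^{-2k-m+i-1}$ (R-U) is the theorem; your step ``feeding $v_F(4)$ \dots into these relations yields $c=\{a\}$'' derives nothing. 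The actual argument must show that a smaller $v(c)$ would contradict either $nL=(\pi^{2m})$ or subnormality, via the structure of $\Tr(\pi^iO_E)$ and $\grd_F(\theta)$ in the two ramification types, and none of that is carried out. As a proof the proposal is therefore incomplete; as a plan of attack it is faithful to the source.
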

\begin{proof}
  This is Proposition 10.2 of  \cite{jacobowitz:hermitian}.
\end{proof}\

Using Propositions~\ref{29}--\ref{213}, one can conclude the following
characterization for modular lattices (see
\cite[Proposition 10.4]{jacobowitz:hermitian}).

\begin{thm}\label{214}
  Let $L$ and $K$ be $\pi^i$-modular lattices, $i\in \Z$. Then
  $L\simeq K$ if and only if $\rank L=\rank K$, $nL=nK$, and $dL=dK$. 
\end{thm}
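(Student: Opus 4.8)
The plan is to reduce to the case of a single modular lattice (a ``block''), then apply the plane-level classifications from the preceding propositions.

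First I would reduce to the case where $L$ and $K$ are $\pi^i$-modular for a \emph{single} fixed $i$. This is exactly the hypothesis, so no reduction is needed on that front; but one should observe that the three invariants $\rank$, $nL$, $dL$ behave well with respect to orthogonal sums and that the ``only if'' direction is essentially immediate: isometric lattices plainly have equal rank, equal norm ideal $nL$, and $dL = dK$ up to $N_{E/F}(O_E^\times)$, which under the present conventions is the stated equality $dL=dK$. So the content is the ``if'' direction.

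For the ``if'' direction, the strategy is: by Proposition~\ref{29}, write $L \simeq L_0 \oplus H(i)^{\oplus s}$ and $K \simeq K_0 \oplus H(i)^{\oplus t}$ with $L_0, K_0$ of rank one or two satisfying $nL_0 = nL$ and $nK_0 = nK$. Since $\rank L = \rank K$ and $nL = nK$, and since $d H(i)$ contributes a fixed factor (namely $-N(\pi^i) = -\pi_F^i$ up to norms), we can match $s = t$ and reduce, using the cancellation Proposition~\ref{210}, to proving $L_0 \simeq K_0$. Now $L_0$ and $K_0$ are $\pi^i$-modular of the same rank $\in\{1,2\}$ with $nL_0 = nK_0$ and $dL_0 \simeq dK_0$ (the discriminant of the $H(i)$ parts having cancelled). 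The rank-one case is trivial since a rank-one $\pi^i$-modular lattice is determined by $h(x,x)$, i.e.\ by $nL_0 = dL_0$. For the rank-two case one splits according to whether $nL_0 = nH(i)$ or $nL_0 \supsetneq nH(i)$: in the first (minimal) case apply Proposition~\ref{212}(3), which says a $\pi^i$-modular plane with $nL = nH(i)$ is determined by $nL$ and $dL$ up to isometry; in the second case apply Proposition~\ref{213}, which gives an explicit normal form in terms of $i$, $m$ (where $nL_0 = (\pi^{2m})$), $dL_0$, and whether $L_0$ is normal or subnormal --- but normality versus subnormality is itself detectable from $nL_0$ and $dL_0$, since in the normal case $dL_0 \simeq dL_0$ takes the shape $\det\begin{pmatrix}1&1\\1&\{1\}\end{pmatrix}$ while in the subnormal case $v(dL_0)$ is forced by the displayed matrix. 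Tracking these determinant values shows that equal $(\,\rank, nL_0, dL_0\,)$ forces the same normal form, hence $L_0\simeq K_0$.

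The main obstacle I expect is the bookkeeping in the rank-two subnormal case of Proposition~\ref{213}: one must check that the pair $(nL_0, dL_0)$ really does pin down \emph{both} the normal/subnormal dichotomy and, within the subnormal branch, the isometry class, distinguishing R-P from R-U correctly --- i.e.\ verifying that the two explicit matrices in Proposition~\ref{213}(2) have determinants that are incompatible with the normal-case determinant from part (1), so that no ambiguity survives. This is a finite but delicate valuation computation using the conventions $a = \{b\}$, $a=[b]$ and the description of $\grd_F$; once it is in place, the theorem follows by assembling the block decomposition as above. A secondary point to be careful about is that $nH(i)$ and the ambiguity class of $dH(i)$ are needed explicitly (from Proposition~\ref{211}) to carry out the cancellation cleanly, but these are already recorded in the cited propositions.
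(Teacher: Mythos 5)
Your argument takes essentially the same route as the paper, which offers no proof beyond the remark that the theorem follows from Propositions~\ref{29}--\ref{213} (deferring to Jacobowitz, Proposition~10.4); your block decomposition via Propositions~\ref{29} and \ref{210} and the case analysis via Propositions~\ref{212} and \ref{213} is precisely that route, correctly fleshed out. One small simplification: since $sL_0=(\pi^i)$ is fixed for a $\pi^i$-modular block, the normal/subnormal dichotomy is already determined by $nL_0$ alone, so the determinant bookkeeping you flag in the rank-two case is lighter than you anticipate.
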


\subsection{The ramified dyadic case: the invariants.}

Let $L$ and $K$ be two Hermitian lattices, and let $I\subset O_E$ be a
proper ideal. We write $dL/dK \simeq 1\ (\! \mod I)$ if $v(dL)=v(dK)$
and there are bases $x_i$ and $y_i$ for $L$ and $K$, respectively,
such that 
\[ \det(h_L(x_i,x_j))/\det(h_K(y_i,y_j)) \equiv 1\ (\! \mod I). \]

The following (\cite[Theorem 11.4]{jacobowitz:hermitian}) 
gives a complete classification of Hermitian forms over local maximal
orders by the invariants.

\begin{thm}\label{215}
  Let $L$ and $K$ be two Hermitian lattices. Suppose 
\[ L=\oplus_{1\le \lambda\le t} L_\lambda \quad \text{and} \quad 
   K=\oplus_{1\le \lambda\le T} K_\lambda \]
are any Jordan splittings. Then $L$ and $K$ are isometric if and only
if the following four conditions hold:
\begin{enumerate}
\item $L$ and $K$ are of the same type.
\item $dL\simeq dK$.
\item $u_L(\lambda)=u_K(\lambda)$ for all $\lambda=1,\dots, t$.
\item For all $j=1,\dots, t-1$, one has
\[ d(L_1\oplus \dots \oplus L_j)/d(K_1\oplus \dots \oplus K_j)\simeq 1
\quad (\!\!\mod \grf(\lambda)\,). \]
\end{enumerate}
\end{thm}
 


\section{Unpolarized cases}
\label{sec:04}
In this section we assume that $p\equiv 3 \ (\,{\rm mod}\, 4)$. 
Recall that $R=\Z_2[X]/(X^2+p)=\Z_2[\pi]$ and $E=R\otimes_{\Z_2}
\Q_2=\Q_2[\pi]$. Let $O_E$ 
be the ring of integers of $E$. We have 
\[ O_E=\Z_2[\alpha]=\Z_2[X]/(X^2+X+(p+1)/4). \]
Put $\omega:=\pi-1$, and one has 
\[ R=\Z_2[\omega]=\Z_2[X]/(X^2+2X+(p+1)\,)\quad \text{and}
\quad 2\alpha=\omega.  \] 
We shall classify $R$-modules $M$ which is finite and free as
$\Z_2$-modules. Write $<x_1,\dots, x_m>_R$ for the $R$-submodule of
$M$ generated by elements $x_1, \dots, x_m$. 

We divide the classification into two cases:\\

{\bf Case (a):} $p\equiv 3 \ (\,{\rm mod}\, 8)$. In this case, the
algebra $E$ is
a unramified 
quadratic extension of $\Q_2$. We have (at least) 
two indecomposable $\Z_2$-free finite $R$-modules: 
$R$ and $O_E$ as $R$-modules. The $R$-module structure of $O_E$ is
given as follows: write $O_E=<1,\alpha>_{\Z_2}$, then 
\begin{equation}
  \label{eq:41}
  \omega 1= 2\alpha\quad\text{and} \quad 
\omega \alpha= -2\alpha - (p+1)/2. 
\end{equation}
If $M=R^{\oplus r}\oplus O_E^{\oplus s}$, then the non-negative integers $r$ and $s$ are
uniquely determined by $M$. 
Indeed, we have $r+s=\dim_{E} M\otimes_{\Z_2} \Q_2$, 
and $M/(2,\omega)M=(\F_2)^r\oplus (\F_2\oplus \F_2)^s. $  \\

{\bf Case (b):} $p\equiv 7 \ (\,{\rm mod}\, 8)$. In this case,
the algebra $E$ is equal to $\Q_2\times \Q_2$. Write 
\[ X^2+X+(p+1)/4=(X-\alpha_1)(X-\alpha_2), \]
where
$\alpha_1, \alpha_2\in \Z_2$. By switching the order, we may assume
that $\alpha_1$ is a unit and $\alpha_2\in 2\Z_2$. We have the isomorphisms
\[ O_E=\Z_2[\alpha] \simeq O_E/(\alpha-\alpha_1)\times
O_E/(\alpha-\alpha_2)\simeq \Z_2\times \Z_2. \] 
Therefore, 
\[ X^2+2X+(p+1)=(X-2\alpha_1)(X-2\alpha_2). \]
We have (at least) three indecomposable $\Z_2$-free finite
$R$-modules: 
\[ R, \quad R/(\omega-2\alpha_1),\quad\text{and}\quad R/(\omega-2\alpha_2).\] 
Among them, we have 
\[ O_E\simeq R/(\omega-2\alpha_1)\oplus R/(\omega-2\alpha_2) \]
as $R$-modules. 
If $M=R^r\oplus [R/(\omega-2\alpha_1)]^s\oplus
[R/(\omega-2\alpha_2)]^t$, then the non-negative integers $r$, $s$ and
$t$ are uniquely determined by $M$. Indeed, we have 
\[ \rank_{\Z_2} M=2r+s+t, \quad 
M/(2,\omega)M=\F_2^r\oplus \F_2^s\oplus \F_2^t, \]
and  
\[ M/(\omega-2\alpha_1)M=[R/(\omega-2\alpha_1)]^{r+s}\oplus (\F_2)^t. \]\


Conversely, we show that the indecomposable
finite $R$-modules described in
Cases (a) and (b) exhaust all possibilities.
  
\begin{thm}\label{41} 
Let $M$ be a $\Z_2$-free finite $R$-module. Then \
\begin{enumerate}
  \item Assume $p\equiv 3 \ (\,{\rm mod}\, 8)$ {\rm (Case
      (a))}. The $R$-module $M$ is isomorphic to 
    $R^r\oplus O_E^s$ 
    for some non-negative integers $r$ and $s$. Moreover, the integers 
    $r$ and $s$ are uniquely determined by $M$. 
  \item Assume $p\equiv 7 \ (\,{\rm mod}\, 8)$ {\rm (Case
      (b))}. The $R$-module $M$ is isomorphic to  
\[ R^r\oplus \left [ R/(\omega-2\alpha_1)\right ]^s\oplus
\left [ R/(\omega-2\alpha_2)\right ]^t\]
    for some non-negative integers $r$, $s$ and $t$. Moreover, the
    integers $r$, $s$ and $t$ are uniquely determined by $M$. 
\end{enumerate}
\end{thm}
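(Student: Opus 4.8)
The plan is to argue by induction on $\rank_{\Z_2} M$, splitting off one indecomposable summand at a time, and to treat the two cases in parallel since both hinge on the same local-ring analysis of $R$. The key structural fact is that $R=\Z_2[\omega]$ with $\omega^2=-2\omega-(p+1)$ is a local ring with maximal ideal $\grm=(2,\omega)$ and residue field $\F_2$, and that $R$ is an order in $E$ with $O_E/R$ of $\Z_2$-length one (namely $O_E=R+\Z_2\alpha$ with $2\alpha=\omega$, so $\grm\cdot O_E\subset R$). Thus every $\Z_2$-free $R$-module $M$ sits in $\ol M:=M\otimes_R O_E$, and $O_E$ is either the unramified quadratic ring $\Z_2[\alpha]$ (Case (a)) or $\Z_2\times\Z_2$ (Case (b)), over which module theory is transparent: $O_E$-lattices are free in Case (a), and are products $\Z_2^a\times\Z_2^b$ in Case (b).

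First I would dispose of uniqueness, which is already done in the text: the invariants $\rank_{\Z_2}M$, $\dim_{\F_2} M/\grm M$, and (in Case (b)) $\dim_{\F_2}\bigl(M/(\omega-2\alpha_1)M\bigr)$ determine $(r,s)$ resp. $(r,s,t)$, and these are manifestly isomorphism invariants. For existence, the heart is the inductive step. Pick $x\in M$ with $x\notin\grm M$ (a "maximal" vector). The submodule $Rx$ is a cyclic $R$-module, hence a quotient of $R$; since $M$ is $\Z_2$-torsion-free, $Rx$ is $\Z_2$-free, so $Rx\cong R$ or $Rx\cong R/J$ for a nonzero proper ideal $J$ with $R/J$ $\Z_2$-free. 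One checks the only such $J$ are $0$ in Case (a) (no nontrivial $\Z_2$-free cyclic quotient exists because $E$ is a field, so any proper quotient is $\Z_2$-torsion), and $0,(\omega-2\alpha_1),(\omega-2\alpha_2)$ in Case (b) (the height-one primes of $R$ above the two factors of $E$, which are exactly the $\ker$'s of $R\to\Z_2$). Hence $Rx$ is one of the listed indecomposables.

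The main obstacle — and the place deserving real care — is showing that such an $Rx$ can be chosen to be a direct summand of $M$, i.e. that the inclusion $Rx\embed M$ splits. Here I would choose $x$ cleverly rather than arbitrarily: pass to $\ol M=M\otimes_R O_E$, choose a basis-type decomposition of $\ol M$ over $O_E$ (using freeness in Case (a), the idempotent decomposition in Case (b)), pick $x$ lifting a basis vector, and show $M/Rx$ is again $\Z_2$-free so that induction applies and the extension $0\to Rx\to M\to M/Rx\to 0$ splits. Freeness of $M/Rx$ is the crux: one must verify no $\Z_2$-torsion is created, which amounts to checking $Rx = M\cap (Ex)$ inside $M\otimes_{\Z_2}\Q_2$ (saturation), and this is exactly where the choice of $x$ as a "primitive" vector adapted to $\ol M$ is used — in Case (a) because $O_E$ is a DVR-like (actually a local PID) ring so a primitive vector generates a free direct summand of $\ol M$ and one pulls this back, and in Case (b) because after the idempotent splitting each factor is a $\Z_2$-lattice where primitivity is elementary. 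Once $M/Rx$ is $\Z_2$-free of smaller rank, the induction hypothesis gives its decomposition, and since each listed indecomposable is $\Z_2$-free the $\mathrm{Ext}^1_R$ of it against $Rx$ vanishes (or one splits the sequence by hand over $\Z_2$ and checks $R$-linearity), completing the step.
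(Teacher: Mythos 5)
There is a genuine gap in Case (a), and it is fatal to the inductive step as you have set it up. Your induction only ever splits off \emph{cyclic} submodules $Rx$ generated by a maximal vector, and, as you correctly observe, when $E$ is a field the only $\Z_2$-free cyclic $R$-module is $R$ itself. But in Case (a) the classification list contains the indecomposable $O_E$, which is \emph{not} cyclic over $R$: since $\grm O_E=2O_E$ (because $\omega=2\alpha$ with $\alpha\in O_E^\times$), one has $O_E/\grm O_E\simeq \F_4$, of dimension $2$ over $R/\grm=\F_2$, so $O_E$ needs two generators. Consequently your step already fails for $M=O_E$: every maximal vector $x$ is a unit of $O_E$, so $Rx=xR$ has index $2$ in $O_E$ (e.g.\ $R\cdot 1=R$), hence $M/Rx$ has $2$-torsion and $Rx$ is never a direct summand --- no ``clever choice of $x$'' can repair this. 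The paper's proof of part (1) avoids the problem by splitting off \emph{two-generator} submodules: it picks $x$ whose image in $M/2M$ is killed by $\omega$, notes that then $\omega x/2\in M$, and takes $M_1=\langle x,\omega x/2\rangle_{\Z_2}\simeq O_E$, verifying directly that $M/M_1$ is $\Z_2$-free. Your scheme must be enlarged to admit this second kind of building block before the induction can even begin.

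A second, smaller defect is the appeal to vanishing of $\mathrm{Ext}^1_R$ between $\Z_2$-free modules to split $0\to Rx\to M\to M/Rx\to 0$. This is false: $R$ is not hereditary, and the paper's Remark~\ref{44} exhibits a surjection $R\oplus R\to O_E$ admitting no section, i.e.\ a non-split extension of $O_E$ by a $\Z_2$-free kernel. So even after $M/Rx$ is known to be $\Z_2$-free and decomposed by induction, the splitting of the extension must be extracted from the specific construction (the paper does this by producing explicit complements), not from homological generalities; ``splitting over $\Z_2$ and checking $R$-linearity'' is exactly the nontrivial point, not a routine one. In Case (b) your cyclic strategy is closer to viable, since all three indecomposables there are cyclic, but the paper instead argues globally via the kernels $M_i=\Ker(\omega-2\alpha_i)$, the inclusion $2M\subset M_1+M_2$, and explicit complements $F_0,F_1,F_2$, which sidesteps the splitting issue entirely.
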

\begin{proof} The unique determination of integers $r$, $s$ and $t$
    has been shown. We prove the first part of each statement. 

  (1) Let 
  \begin{equation}
    \label{eq:42}
    \ol M:=M/2M=(\F_2[\omega]/\omega^2)^r\oplus (\F_2)^{2s}
  \end{equation}
be the decomposition as $R/2R=\F_2[\omega]/(\omega^2)$-modules. We
first show that if $s=0$, then $M\simeq R^r$. Since $r=\dim
M\otimes_{R} \F_2=\dim_E M\otimes_{R} E$ and $R$ is a local Noetherian
domain, the module $M$ is free. 

Now suppose $s>0$. Choose an element $a \neq 0 \in (\F_2)^{2s}$
and let 
$x\in M$ be an element such that $\bar x=a$. As $\ol{\omega
  x}=0$, the element $\omega x/2\in M$. Put $M_1:=<x,\omega
x/2>_{\Z_2}$; it is an $R$-module and is isomorphic to $O_E$. Let
$\omega'$ be the conjugate of $\omega$; one has $\omega'=-2-\omega$
and $\omega \omega'=(1+p)$. Note that $(1+p)/4$ is a unit. Since $\bar
x\not\in \omega' \ol M$, one has $x\not\in \omega' M$. We show that
$\ol{\omega x/2}\neq 0$. Suppose not, then $\omega x=4y$ for some
$y\in M$. Applying $\omega'$, we get $x=\omega' y'$ for some $y'\in
M$, contradiction. Since $x$ and ${\omega x/2}$ are $\Z_2$-linearly 
independent and are not divisible by $2$, 
the $\F_2$-vector space $\ol M_2=<\bar x, \ol{\omega
  x/2}>$ has 
dimension $2$, and hence the quotient $\ol M/\ol M_1$ has dimension
deceased by $2$. On the other hand, the $\Z_2$-rank of $M/M_1$ also
decreases by $2$. This shows that $M/M_1$ is free as $\Z_2$-modules. 
If the integer $s$ in (\ref{eq:42}) for $M/M_1$ is positive, then we
can find an $R$-submodule $M_2=<x_2, \omega x_2/2>_{\Z_2}\simeq O_E$
not contained in the vector space $E M_1$ 
such that $M/(M_1+M_2)$ is free as $\Z_2$-modules. 
Continuing this process, we get $R$-submodules $M_1,\dots, M_{s'}$,
which are isomorphic to $O_E$, such that $M_1+\dots+M_{s'}=M_1\oplus
\dots\oplus M_{s'}$ and $M/(M_1+\dots + M_{s'})$ is a free
$R$-module. It follows that $M\simeq O_E^{s'}\oplus R^{r'}$. Since
$s'$ and $r'$ are uniquely determined by $M$ as before, the integers
$s'$ and $r'$ are actually equal to $s$ and $r$ in (\ref{eq:42}), 
respectively. This proves (1). 

(2) Let 
\[ M_1:=\{x\in M\, |\, (\omega-2\alpha_1)x=0\, \},\]
and 
\[ M_2:=\{x\in M\, |\, (\omega-2\alpha_2)x=0\, \}.\]
Using the relation
\[
2=(\omega-2\alpha_1)(2\alpha_2+1)^{-1}-
(\omega-2\alpha_2)(2\alpha_2+1)^{-1}, \] 
one shows that $2M\subset M_1+M_2$, and hence the quotient
$M/(M_1+M_2)$ is an $\F_2$-vector space, say of dimension $r$.   
Let $x_1, \dots, x_r$ be elements of $M$ such that the images $\bar
x_1, \dots, \bar x_r$ form an $\F_2$-basis for $M/(M_1+M_2)$. Put
$F_0:=<x_1, \dots, x_r>_R$, which is isomorphic to $R^r$, as $\bar
x_i's$ form a basis for $F_0/(M_1+M_2)=F_0/(2,\omega)F_0\simeq
\F_2^r$. Now $(\omega-2\alpha_2)F_0\subset M_1$, we choose elements
$y_1, \dots, y_s$ in $M_1$ so that the images $\bar y_1, \dots, \bar
y_s$ form an $R/(\omega-2\alpha_1)$-basis for
$M_1/(\omega-2\alpha_2)F_0$, and put $F_1=<y_1,\dots, y_s>_R$. We have 
\[ M_1=(\omega-2\alpha_2)F_0\oplus F_1,\quad \text{and} \quad F_0\cap
F_1=0. \]
 Similarly, we have a free $R/(\omega-2\alpha_2)$-submodule 
$F_2$ of $M_2$, of rank $t$, such that
\[ M_2=(\omega-2\alpha_1)F_0\oplus F_2,\quad \text{and} \quad F_0\cap
F_2=0. \]  
We have $(F_0+F_1)\cap F_2=F_0\cap F_2=0$ and $M=F_0+F_1+F_2$, and
hence $M=F_0\oplus F_1\oplus F_2$. This proves (2). \qed  
\end{proof}


\begin{cor}\label{42}
  Assume $p\equiv 3 \ (\,{\rm mod}\, 4)$.
  Let $A$ be an 
  $n$-dimensional superspecial abelian variety $A$ over $\Fp$ with
  $\pi_A^2=-p$. 
  Then the Tate module
  $T_2(A)$ of $A$ is isomorphic to $R^r\oplus O_{E}^s$ for
  some non-negative integers $r$ and $s$ such that $r+s=n$. Moreover,
  the integers $r$ and $s$ are uniquely determined by $T_2(A)$. 
\end{cor}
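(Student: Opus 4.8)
The plan is to deduce Corollary~\ref{42} directly from Theorem~\ref{41} together with the structure of the Tate module recalled in the introduction. First I would observe that, since $\pi_A^2 = -p$, the Galois action on $T_2(A)$ factors through the quotient $\Z_2[\calG] \to \Z_2[X]/(X^2+p) = R$, so $T_2(A)$ is naturally a module over $R$; moreover $T_2(A)$ is $\Z_2$-free of rank $2n$ by the general theory of Tate modules. Thus $T_2(A)$ is a $\Z_2$-free finite $R$-module, and Theorem~\ref{41}(1) applies verbatim in the case $p \equiv 3 \ (\mathrm{mod}\ 8)$, giving $T_2(A) \simeq R^r \oplus O_E^s$ with $r,s$ uniquely determined, and $2r + 2s = \rank_{\Z_2} T_2(A) = 2n$, hence $r+s = n$.

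The remaining issue is the case $p \equiv 7 \ (\mathrm{mod}\ 8)$, where Theorem~\ref{41}(2) a priori only gives $T_2(A) \simeq R^r \oplus [R/(\omega-2\alpha_1)]^s \oplus [R/(\omega-2\alpha_2)]^t$; to get the cleaner statement of the corollary one must show $s = t$. The natural way to see this is to use the self-duality of the Weil pairing $e_2$ on $T_2(A)$ together with the fact that the induced involution on $R$ swaps the two idempotents, i.e.\ interchanges the factors $R/(\omega-2\alpha_1)$ and $R/(\omega-2\alpha_2)$ (since $\bar\alpha = -1-\alpha$ forces $\overline{2\alpha_1} = -2-2\alpha_1$, which is the other root $2\alpha_2$, using $2\alpha_1 + 2\alpha_2 = -2$). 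A non-degenerate pairing $\psi$ with $\psi(ax,y) = \psi(x,\bar a y)$ makes the submodule killed by $(\omega - 2\alpha_1)$ pair into the submodule killed by $(\omega-2\alpha_2)$; comparing $\Z_2$-ranks of these two pieces (which are $r+s$ copies of $R/(\omega-2\alpha_1)$ and $r+t$ copies of $R/(\omega-2\alpha_2)$, with $r$ coming from $R^r$ in each case) forces $s = t$, and then $R/(\omega-2\alpha_1)^{\oplus 1} \oplus R/(\omega-2\alpha_2)^{\oplus 1} \simeq O_E$ as $R$-modules lets us rewrite $T_2(A) \simeq R^r \oplus O_E^s$. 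Again $2r + 2s = 2n$ gives $r+s = n$.

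The main obstacle, and the part requiring the most care, is precisely this last step in the $p \equiv 7$ case: one has to be confident that the existence of the self-dual alternating pairing compatible with the involution genuinely forces the two non-free indecomposable summands to occur with equal multiplicity. This is a constraint coming from the polarization that is absent from the purely module-theoretic Theorem~\ref{41}, so it is the one genuinely new ingredient here. If one prefers to avoid invoking any later result about skew-Hermitian modules, the cleanest argument is the rank comparison sketched above applied to $M_1 = \ker(\omega - 2\alpha_1 \mid T_2(A))$ and $M_2 = \ker(\omega - 2\alpha_2 \mid T_2(A))$: non-degeneracy of $e_2$ pairs $M_1$ perfectly with $M_2$ (because $\overline{\omega - 2\alpha_1}$ generates the same ideal as $\omega - 2\alpha_2$), so $\rank_{\Z_2} M_1 = \rank_{\Z_2} M_2$, i.e.\ $r+s = r+t$. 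Everything else is bookkeeping with $\Z_2$-ranks and the already-established uniqueness of the multiplicities.
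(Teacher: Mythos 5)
Your proposal is correct, but the key step --- forcing $s=t$ in the case $p\equiv 7\ (\mathrm{mod}\ 8)$ --- is handled by a genuinely different mechanism than in the paper. The paper's proof does not touch the Weil pairing at all: it observes that $\tr(a;V_2(A))=n\,\tr(a;E)$ for all $a\in R$ (equivalently, the characteristic polynomial of $\pi_A$ on $V_2(A)$ is $(X^2+p)^n$ with coefficients in $\Q$, by the $\ell$-independence of the characteristic polynomial of Frobenius), so that $V_2(A)$ is a free $E$-module of rank $n$; in the split case this immediately gives $\dim V^1=\dim V^2=n$, hence $s=t$. Your argument instead extracts $s=t$ from the polarization: the Rosati involution sends $\pi_A$ to $-\pi_A$ and hence swaps the two idempotent factors, so each eigenspace $V^i$ is isotropic for $e_2$, has dimension at most $n$, and therefore exactly $n$. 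This is sound (your computation that $\overline{2\alpha_1}=2\alpha_2$ and the rank bookkeeping $\rank_{\Z_2}M_i=r+s$, $r+t$ are correct), and it is in fact the very argument the paper deploys later, in \S 5.1, for abstract self-dual skew-Hermitian modules. The trade-off: the paper's route shows that $V_2(A)$ is $E$-free for \emph{any} superspecial $A/\F_p$ with $\pi_A^2=-p$, with no reference to a polarization, at the cost of invoking a global input about Frobenius; your route stays entirely inside the local skew-Hermitian framework of the paper but needs the existence of a (non-degenerate) Weil pairing on $V_2(A)$, which of course always holds. One small caveat on wording: the pairing between $M_1$ and $M_2$ need not be perfect integrally, only non-degenerate after tensoring with $\Q_2$; since you only compare $\Z_2$-ranks this does not affect the argument, but you should phrase the claim at the level of $V^1$ and $V^2$.
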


\begin{proof}
  Note that the Tate space $V_2(A)=T_2(A)\otimes_{\Z_2} \Q_2$ is a free
  $E$-module of rank $n$; this follows from the fact that 
  $\tr(a; V_2(A))= n \tr (a; E)$ for all $a\in R$.
  It follows that the numbers $s$ and $t$ in Theorem~\ref{41} (2) 
  above are the same. Therefore, the corollary follows. \qed 
\end{proof}

\begin{lemma}\label{43}
  Assume $p\equiv 3 \ (\,{\rm mod}\, 4)$. Let $n\ge 1$ be an integer.
  For any non-negative integers $r$ and $s$ with $r+s=n$, there
  exists an $n$-dimensional superspecial abelian variety $A_r$ over
  $\Fp$ with $\pi_A^2=-p$ such that the Tate
  module $T_2(A_r)$ of $A_r$ is isomorphic to $R^r\oplus O_{E}^s$. 
\end{lemma}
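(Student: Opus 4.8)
The plan is to realize every module $R^r\oplus O_E^s$ as a Tate module by reducing to dimension one and then taking a product of two ``building-block'' elliptic curves. Suppose we can produce supersingular elliptic curves $E'$ and $E''$ over $\Fp$, each with $\pi^2=-p$, such that $T_2(E')\cong R$ and $T_2(E'')\cong O_E$ as $R$-modules (the $R$-action being given by the elliptic Frobenius). Then for $r+s=n$ set $A_r:=(E')^r\times(E'')^s$. Base-changing to $\Fpbar$ turns $A_r$ into a product of $n$ supersingular elliptic curves over $\Fpbar$, so $A_r$ is superspecial; its Frobenius acts on the $i$-th factor as the corresponding $\pi_{E'}$ or $\pi_{E''}$, so $\pi_{A_r}^2=-p$; and $T_2(A_r)=T_2(E')^r\oplus T_2(E'')^s\cong R^r\oplus O_E^s$ as $R$-modules, since on each factor the image of $X$ in $R=\Z_2[X]/(X^2+p)$ acts through that factor's Frobenius. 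So the whole statement reduces to constructing $E'$ and $E''$.

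To construct the building blocks, note that demanding $\pi_E^2=-p$ forces $\Tr(\pi_E)=0$, hence $E$ is supersingular---therefore superspecial, as $\dim E=1$---and $E$ lies in the single $\Fp$-isogeny class attached to the Weil number $\sqrt{-p}$. Here $\Q(\pi_E)=\Q(\sqrt{-p})$ is imaginary quadratic, and because $p\equiv 3\pmod 4$ the order $\Z[\pi_E]=\Z[\sqrt{-p}]$ has index exactly $2$ in the maximal order $O_K$; in particular there are precisely two orders $O$ with $\Z[\sqrt{-p}]\subseteq O\subseteq O_K$. By Deuring's theory of complex multiplication for supersingular elliptic curves---equivalently, Waterhouse's classification of endomorphism rings of elliptic curves over finite fields---both of these orders are realized as $\End_{\Fp}(E)$ for some $E$ in this isogeny class (the relevant conductor is $2\neq p$, which is what guarantees realizability). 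Choose $E'$ with $\End_{\Fp}(E')=\Z[\sqrt{-p}]$ and $E''$ with $\End_{\Fp}(E'')=O_K$.

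Finally I would pin down the $R$-module structure. For any $E$ in this isogeny class, $T_2(E)$ is $\Z_2$-free of rank $2$ and $T_2(E)\otimes_{\Z_2}\Q_2$ is free of rank one over $E$ (the argument in the proof of Corollary~\ref{42}); by Theorem~\ref{41} the only such $R$-modules are $R$ and $O_E$, and $\End_R(R)=R$ while $\End_R(O_E)=O_E$. On the other hand Tate's isogeny theorem gives $\End_{\Fp}(E)\otimes_{\Z}\Z_2\isoto\End_{\calG}(T_2(E))$, and since the $\calG$-action on $T_2(E)$ factors through $R=\Z_2[\pi_E]$ this agrees with $\End_R(T_2(E))$. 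For $E'$ this endomorphism ring is $\Z[\sqrt{-p}]\otimes\Z_2=R$, so $T_2(E')\cong R$; for $E''$ it is $O_K\otimes\Z_2=O_E$, the maximal order of $E$, so $T_2(E'')\cong O_E$. This completes the construction.

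The only step that needs genuine input is the realization of the \emph{non-maximal} order $\Z[\sqrt{-p}]$ as an $\Fp$-rational endomorphism ring, which is exactly the Deuring/Waterhouse statement; the rest is the definition of superspecial, Tate's theorem, and the module classification of Theorem~\ref{41}. One could instead try to obtain $E'$ from $E''$ via a well-chosen $2$-isogeny, but verifying that the endomorphism ring drops precisely to $\Z[\sqrt{-p}]$ is no simpler than quoting the theorem, so I would not pursue that route.
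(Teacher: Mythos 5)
Your proposal is correct and follows essentially the same route as the paper: take $A_r=(E')^r\times(E'')^s$ where $E'$ and $E''$ are supersingular elliptic curves over $\Fp$ with $\End_{\Fp}$ equal to $\Z[\sqrt{-p}]$ and to the maximal order respectively, both realized by Waterhouse's theorem. Your extra verification that the endomorphism ring pins down the $R$-module type of $T_2$ (via Tate's theorem and Theorem~\ref{41}) is a detail the paper leaves implicit, but the argument is the same.
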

\begin{proof}
  Choose a supersingular elliptic curve $E_0$ over $\F_p$ such that
  the endomorphism ring $\End_{\Fp}(E_0)$ is equal to the ring
  $O_{\Q(\sqrt{-p})}$ of integers in the imaginary quadratic field
  $\Q(\sqrt{-p})$, and a 
  supersingular elliptic curve $E_1$ over $\F_p$ such that 
  the endomorphism ring $\End_{\Fp}(E_0)$ is equal to 
   $\Z[\sqrt{-p}]$ 
  (see Waterhouse \cite[Theorem 4.2 (3), p.~539]{waterhouse:thesis}). 
  Put $A_r=E_1^r\times E_0^s$, then the superspecial
  abelian variety $A_r$ has the desired property. \qed 
\end{proof}

\begin{remark}\label{44} 
  We recall
  that an order $\Lambda$ (in a semi-simple separable algebra over a
  global or a local field) is called {\it left hereditary} if all its
  left ideals are projective as $\Lambda$-modules. One defines in a
  similar manner the notion of {\it right hereditary}, but it is known
  that these two notions are equivalent \cite[Theorem
  40.1]{reiner:mo}; we shall write {\it hereditary}. 

  The structure
  of hereditary orders is known (see \cite[Theorem 39.14]{reiner:mo})
  and they include in particular the maximal orders \cite[Section
  26]{curtis-reiner:1}. 

  It is worth noting that the ring $R$ is not hereditary. Indeed, the
  ideal $2O_E\simeq O_E$ is not a projective $R$-module, as the
  surjective map
\[ \pi:R\oplus R\to O_E, \quad (1,0)\mapsto 1,\ (0,1)\mapsto \alpha \]
  does not admit a section.  
\end{remark}

\section{Proof of Theorem~\ref{11}.}
\label{sec:05}

\subsection{}\label{sec:51}
In this section we assume that $p\equiv 7 \ (\,{\rm mod}\, 8)$. Thus,
the algebra $E$ is isomorphic to $\Q_2\times\Q_2$. Write
$\sigma_i:E\to \Q_2$ for the $i$th projection for $i=1,2$. Let
$(M,\psi)$ be a self-dual skew-Hermitian module over $R$ of
$\Z_2$-rank $2n$ and let $V:=M\otimes_{\Z_2} \Q_2$. For $i=1,2$, let
\begin{equation}
  \label{eq:51}
  V^i:=\{x\in V;\, ax=\sigma_i(a)x,\ \forall a\in E\,
  \}\quad\text{and}\quad  M^i:=V^i \cap M
\end{equation}
be the $\sigma_i$-components of $V$ and $M$, respectively.
It follows from the property $\psi(ax,y)=\psi(x,\bar a y)$ that each
$V^i$ is an isotropic subspace over $\Q_2$. Thus, $\dim V^i\le n$ for
$i=1,2$ and hence $\dim V^1=\dim V^2$. Therefore, $V$ is a free
$E$-module of rank $n$. By Theorem~\ref{41}, there are unique non-negative
integers $r$ and $s$ with $r+s=n$ such that 
\begin{equation}
  \label{eq:52}
  M\simeq R^{\oplus r} \oplus O_E^{\oplus s}
\end{equation}
as $R$-modules. Note that $r=0$ if and only if $M=M^1+M^2$, and we have
$M/(M^1+M^2)\simeq (\Z/2\Z)^r$. 

Define a self-dual skew-Hermitian module $(L, \varphi)$ as
follows. The $R$-module $L$ is 
$O_E=\Z_2\oplus \Z_2$. Put $e_1=(1,0)$ and
$e_2=(0,1)$, and set $\varphi(e_1,e_2)=1$.

\subsection{}\label{sec:52}
Suppose there exist elements $x\in M^1$ and $y\in M^2$ such that
$\psi(x,y)=1$. Then the submodule $M_1=<x,y>_R$ generated by $x$ and
$y$ is isomorphic to $L$ as skew-Hermitian modules and we have the
decomposition 
\begin{equation}
  \label{eq:53}
  M=M_1\oplus M_1^{\bot}
\end{equation}
as skew-Hermitian modules, where $M_1^{\bot}$ is the orthogonal
complement of $M_1$. Therefore, if $s=n$, then $M\simeq L^{\oplus n}$
as skew-Hermitian modules. 


\subsection{}\label{sec:53}
Recall that the polarization type of a non-degenerate alternating
pairing $\psi'$ on a free module $M'$ over a PID $R'$ of rank $r$ is
a tuple $(d_1,\dots, d_r)$ of elements in $R'$ with $d_1|\dots|d_r$
such that there exists a Lagrangian $R'$-basis $x_i, y_i$ for
$i=1,\dots, r$ 
such that $\psi'(x_i,y_i)=d_i$. The elements $d_i$ are unique up to a
unit in $R'$. The choice of the Lagrangian basis $\{x_i, y_i\}$ gives
rise to a splitting of $M'=M'_1\oplus M'_2$ into isotropic
submodules $M'_1:=<x_1, \dots, x_r>_{R'}$ and $M'_2:=<y_1, \dots,
y_r>_{R'}$.
Conversely, if $M'$ splits into 
the direct sum of two isotropic submodules $M'_1$ and $M'_2$. 
then a Lagrangian basis $\{x_i,y_i\}$ can be chosen so that $x_i\in
M'_1$ and $y_i\in M'_2$ for all $i=1,\dots, r$.


\begin{lemma}\label{51} \
\begin{enumerate}
\item   The polarization type of the pairing $\psi$ on the submodule
  $M^1+M^2$ as a $\Z_2$-module is $(1,\dots, 1,2,\dots, 2)$ 
  with multiplicity $s$ and $r$ for $1$ and $2$, respectively. 
\item There is a decomposition as skew-Hermitian modules
  \begin{equation}
    \label{eq:54}
   M\simeq M_1\oplus L^{\oplus s},  
  \end{equation}
where $M_1$ is a free $R$-submodule of rank $r$ which is self-dual
with respect to the pairing $\psi$.   
\end{enumerate}
\end{lemma}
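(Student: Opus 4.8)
The plan is to work entirely inside the $\Z_2$-module $M^1+M^2$, which carries the restriction of $\psi$, and to exploit the $E=\Q_2\times\Q_2$ grading. First I would observe that since $V^1$ and $V^2$ are totally isotropic and $\psi$ is non-degenerate on $V$, the pairing $\psi$ puts $V^1$ and $V^2$ in perfect duality; restricting to the lattices, $\psi$ gives a perfect pairing $M^1\times M^2\to \Q_2$ only up to an explicit discrepancy, and the polarization type of $\psi$ on $M^1\oplus M^2$ is governed by the relative position of $M^2$ inside the $\Z_2$-dual of $M^1$. Concretely, pick a $\Z_2$-basis of $M^1$; then $\psi$ identifies $V^2$ with $(V^1)^\ast$, and $M^2$ with some lattice between $(M^1)^\ast$ and $2(M^1)^\ast$ (or similar), the precise bound coming from the relation $M/(M^1+M^2)\simeq(\Z/2\Z)^r$ established in \S\ref{sec:51}. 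Since $M\simeq R^{\oplus r}\oplus O_E^{\oplus s}$, the $O_E^{\oplus s}$ summand contributes the part where $M^1$ and $M^2$ pair perfectly (each $O_E$ copy looks like $L$, giving a $\psi$-value $1$), while each $R$ summand forces a factor $2$ because $R$ is non-maximal: in the $R$-summand one has $M^1+M^2=(\omega-2\alpha_1)R\oplus(\omega-2\alpha_2)R$ sitting at index $2$ inside $R$, and a short computation with the idempotents shows $\psi$ restricted there has invariant factor $2$. Assembling these over the $r+s=n$ summands gives part (1): the polarization type is $(1,\dots,1,2,\dots,2)$ with $s$ ones and $r$ twos.

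For part (2), I would argue as follows. Part (1) already exhibits, via the Lagrangian basis realizing the polarization type on the $\Z_2$-module $M^1+M^2$, a splitting-off of $s$ hyperbolic planes each isomorphic to $L$: more precisely, the $s$ basis pairs with $\psi$-value $1$ span an $R$-submodule isometric to $L^{\oplus s}$ (one must check the basis can be chosen compatibly with the $\sigma_i$-grading, using the converse statement in \S\ref{sec:53} applied to the decomposition $M^1+M^2=(M^1)\oplus(M^2)$ into isotropic pieces). The decomposition (\ref{eq:53}) from \S\ref{sec:52} then lets me peel these off one at a time: whenever there exist $x\in M^1$, $y\in M^2$ with $\psi(x,y)=1$, I get $M=\langle x,y\rangle_R\oplus\langle x,y\rangle_R^\perp$ with $\langle x,y\rangle_R\simeq L$, and the orthogonal complement is again a self-dual skew-Hermitian $R$-module with the same structure but $s$ decreased by one. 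Iterating $s$ times produces $M\simeq M_1\oplus L^{\oplus s}$ where $M_1$ is the orthogonal complement, self-dual with respect to $\psi$. Finally, $M_1$ has $s=0$ in its decomposition (\ref{eq:52}) — since the $O_E$-summands were precisely what supplied the value-$1$ pairs and these are now exhausted — so by Theorem~\ref{41}(2) (or rather its consequence that $s=0$ forces freeness, as in the proof of Corollary~\ref{42}) $M_1$ is a free $R$-module of rank $r$.

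The step I expect to be the main obstacle is making rigorous the claim that the value-$1$ pairs can always be realized \emph{within the $\sigma_i$-graded pieces} and that after peeling them off the residual module $M_1$ genuinely has $s=0$. The subtlety is that $M^1+M^2$ is only a $\Z_2$-module, not an $R$-module in an obvious self-dual way, so one cannot naively quote the PID polarization-type story for $R$; instead one must track how the $E$-action interacts with the orthogonal complement at each stage. I would handle this by verifying that $M_1^\perp$ (the complement of a single $L$) is stable under $R$ and inherits the property $M_1^\perp/((M_1^\perp)^1+(M_1^\perp)^2)\simeq(\Z/2\Z)^r$ with the same $r$ — equivalently that removing an $L$ removes exactly one $O_E$ from (\ref{eq:52}) and leaves $r$ unchanged — which follows from the uniqueness of $r,s$ in Theorem~\ref{41}(2) together with rank bookkeeping. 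Once that invariance is in hand, the induction on $s$ closes and both parts follow.
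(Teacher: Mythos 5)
Your proposal is correct, and part (2) ends up essentially where the paper does: both arguments peel off copies of $L$ via the splitting of \S\ref{sec:52} (a pair $x\in M^1$, $y\in M^2$ with $\psi(x,y)=1$ generates a unimodular $R$-submodule isomorphic to $L$, whose orthogonal complement is again self-dual and $R$-stable), and both invoke the uniqueness of $r,s$ in Theorem~\ref{41} to see that each such step removes exactly one $O_E$ from the module type, so that after $s$ steps the residue is free of rank $r$. Where you genuinely diverge is part (1). The paper proves (1) and (2) together by induction on $s$: the product of the invariant factors is $2^r$ by the index computation, the case $s=0$ is forced (if $a_1>0$ then all $a_i=1$ and $r=n$), and the inductive step reads the type off the splitting $M\simeq M'\oplus L$. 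You instead prove (1) outright by lattice duality: $\psi$ puts $V^1$ and $V^2$ in perfect duality, $M^2\subset(M^1)^*$, and $2(M^1)^*\subset M^2$ because $(M^1+M^2)^\vee/M$ is Pontryagin dual to $M/(M^1+M^2)\simeq(\Z/2\Z)^r$, so $2(M^1+M^2)^\vee\subset M$ and intersecting with $V^2$ lands in $M^2$; with all elementary divisors in $\{1,2\}$ and index $2^r$, the type must be $(1,\dots,1,2,\dots,2)$ with $s$ ones and $r$ twos. This is a clean, non-inductive route that decouples (1) from (2); you should just spell out the one duality step, which you currently only gesture at with ``the precise bound coming from $M/(M^1+M^2)\simeq(\Z/2\Z)^r$.''

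One phrase to strike: the claim that ``the $O_E^{\oplus s}$ summand contributes the value-$1$ pairs and each $R$ summand forces a factor $2$'' is not an argument, because the decomposition $M\simeq R^{\oplus r}\oplus O_E^{\oplus s}$ is only an isomorphism of $R$-modules and need not be orthogonal for $\psi$ --- orthogonality of such a decomposition is precisely what part (2) asserts. It is also unnecessary: the only input you need from the module structure is $[M:M^1+M^2]=2^r$ (which is additive over the summands), and the duality bound does the rest.
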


\begin{proof}
The polarization type of the pairing $\psi$ on the submodule
  $M^1+M^2$ as a $\Z_2$-module has the form $(2^{a_1},\dots, 2^{a_n})$
  with integers $0\le a_1\le \dots \le a_n$. Since
  $|M/(M^1+M^2)|=2^r$, one has $\sum_{i=1}^n a_i=r$. If $a_1>0$, then
  $a_i=1$ for all $i$ and $r=n$. Below we show that $s=0$ implies
  $a_1>0$. This proves the case where $s=0$. 

  If $a_1=0$, then there exist elements $x\in
  M^1$ and $y\in M^2$ such that $\psi(x,y)=1$. Using \S~\ref{sec:52}, one has a
  decomposition of $M$ into skew-Hermitian modules
\begin{equation} \label{eq:55}
   M\simeq M'\oplus L 
\end{equation}
with $M'\simeq R^r\oplus O_E^{s-1}$ as an $R$-module, particularly
$s>0$. 

Suppose $s>0$ and we prove the statement by induction on $s$.
Then $a_1=0$ and by the same argument we have $M\simeq M'\oplus L$ as
above. By the induction hypothesis,  
 the polarization type of $\psi$ on $M^1+M^2$ is
$(1,\dots,1,2,\dots,2)$ with multiplicity $s$ for $1$, and we get
a decomposition of $M$ into skew-Hermitian modules
\[ M\simeq M_1 \oplus L^{\oplus s}, \]
where $M_1$ is a free $R$-submodule of rank $r$ which is self-dual
with respect to the pairing $\psi$. This proves both (1) and (2). \qed
\end{proof}

\subsection{}
\label{sec:54}
Now we classify
self-dual skew-Hermitian modules $(M,\psi)$ in the case where $M\simeq
R^r$, where $r$ is a positive integer. Let $N$ be the
smallest $O_E$-module in $V$ containing $M$, and let $N'=M^1+M^2$. We
have
\[ N=N^1\oplus N^2, \quad N'=2N. \]
The polarization type of $\psi$ on $2N$ is $(2,\dots, 2)$. Put
$\psi_N:=2\psi$. Then we have $\psi_N(N,N)\subset \Z_2$ and 
$N$ is self-dual with respect to the pairing $\psi_N$. 
Put $\ol N=N/2N$ and let $\ol \psi_N:\ol N\times \ol N\to
\F_2$ be the induced non-degenerate pairing. We have 
\begin{equation}
  \label{eq:56}
  2N\subset M\subset N,\quad \dim_{\F_2} \ol M = r
\end{equation}
and that $\ol M$ is isotropic for $\ol \psi_N$, where $\ol M:=M/2N$. Note
that $M$ generates $N$ over $O_E$, or equivalently, $\ol M$ generates
$\ol N$ over $O_E/2=\F_2\times \F_2$. 

Suppose $M_1$ is another self-dual skew-Hermitian module such
that $M_1\simeq R^r$. Define $N_1$ and $\psi_{N_1}$ similarly. 
Since $(N_1,\psi_{N_1})$ is isomorphic to $(N,\psi_N)$ by Lemma~\ref{51}
(2), 
we choose an isomorphism $\alpha: (N_1,\psi_{N_1})\simeq (N,\psi_N)$ of
skew-Hermitian modules. The image $M'=\alpha(M_1)$ is an $R$-module
which satisfies the same property as $M$ does (\ref{eq:56}).

Now we fix the self-dual skew-Hermitian module $(N,\psi_N)$. Consider
the set $X_r$ of $R$-submodules $M$ of $N$ such that 
\begin{itemize}
\item $2N\subset M\subset N$ and $\dim_{\F_2} \ol M = r$, 
\item $\ol M$ is isotropic with respect to the pairing $\ol \psi_N$,
  and
\item $M$ generates $N$ over $O_E$.
\end{itemize}
We need to determine the isomorphism classes of elements $M$ 
in $X_r$. Let $\ol X_r$ be the set of maximally isotropic
$\F_2$-subspaces $\ol M$ of $\ol N$ such that 
$\ol M$ generates $\ol N$ over $\F_2\times
\F_2$. Since the $R$-module structure of $\ol N$ is simply an
$\F_2$-module, the reduction map $M\mapsto \ol M$ gives rise
to a bijection $X_r\simeq \ol X_r$. 


If two elements $M_1$ and $M_2$ in $X_r$ are isomorphic as skew-Hermitian
modules, then any isomorphism between them lifts to an $R$-linear
automorphism of 
$(N,\psi_N)$. Therefore, the set of isomorphism classes of elements in
$X_r$ is in bijection with the quotient 
$\Aut_R(N,\psi_N)\backslash X_r$. As the
$R$-action on $N$ extends uniquely to an $O_E$-action on $N$, we have 
$\Aut_{R}(N,\psi_N)=\Aut_{O_E}(N,\psi_N)$. The action of the group
$\Aut_{O_E}(N,\psi_N)$ on $X_r$ factors through $\Aut_{\ol O_E}(\ol
N,\ol \psi_N)$, and the reduction map yields a bijection 
\[ \Aut_{O_E}(N,\psi_N)\backslash X_r\simeq 
\Aut_{\ol O_E}(\ol N,\ol \psi_N)\backslash \ol X_r. \]


\begin{prop}\label{52}
   Let $r$ be a positive integer.
   There are 
\[ \#\, \Aut_{\ol O_E}(\ol N,\ol \psi_N)\backslash \ol X_r.  \]
non-isomorphic self-dual skew-Hermitian modules $M$ such that $M\simeq
R^r$ as $R$-modules.
\end{prop}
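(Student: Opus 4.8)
The plan is to unwind the chain of bijections established in \S\ref{sec:54} and observe that the statement of Proposition~\ref{52} is essentially a bookkeeping assertion: having already set up the reduction maps, the count of non-isomorphic self-dual skew-Hermitian modules $M$ with $M\simeq R^r$ is precisely the cardinality $\#\,\Aut_{\ol O_E}(\ol N,\ol \psi_N)\backslash \ol X_r$. So the proof amounts to verifying that each of the intermediate identifications is genuinely a bijection and that they are compatible.

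First I would recall that by Lemma~\ref{51}(2), for any self-dual skew-Hermitian module $M$ with $M\simeq R^r$, the associated pair $(N,\psi_N)$ (with $N$ the $O_E$-saturation of $M$ and $\psi_N=2\psi$) is a fixed isomorphism type, independent of $M$; this is what lets us fix once and for all a model $(N,\psi_N)$ and realize every such $M$ as a submodule of that single $N$. Thus the set of isomorphism classes of self-dual skew-Hermitian $M$ with $M\simeq R^r$ maps onto the set of $\Aut_{O_E}(N,\psi_N)$-orbits on $X_r$. Next I would argue this map is a bijection: surjectivity is clear since every $M\in X_r$ is by construction self-dual of the right rank, and injectivity is the assertion already noted in \S\ref{sec:54} that any isomorphism of skew-Hermitian modules $M_1\isoto M_2$ (with $M_i\in X_r$) extends to an automorphism of $(N,\psi_N)$ — this uses that an isometry sends $N_1$ to $N_2$ because $N_i$ is canonically recovered from $M_i$ as the $O_E$-span inside the respective Tate space, and then transports $M_1$ to $M_2$ inside the fixed $N$.

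Then I would compose with the two reductions already recorded: the bijection $X_r\simeq \ol X_r$ given by $M\mapsto \ol M = M/2N$ (bijective because $2N\subset M\subset N$ and the $R$-module structure on $\ol N=N/2N$ is just that of an $\F_2$-vector space, so submodules correspond to subspaces, and the three defining conditions on $M$ match the three defining conditions on $\ol M$); and the identification $\Aut_{O_E}(N,\psi_N)\backslash X_r \simeq \Aut_{\ol O_E}(\ol N,\ol\psi_N)\backslash \ol X_r$, which follows since the action of $\Aut_{O_E}(N,\psi_N)$ on $X_r$ is compatible with reduction and factors through $\Aut_{\ol O_E}(\ol N,\ol\psi_N)$, and conversely (this is the one point needing a word of justification) every $\ol O_E$-linear isometry of $(\ol N,\ol\psi_N)$ lifts to an $O_E$-linear isometry of $(N,\psi_N)$ — a standard lifting argument for unimodular forms over the complete local ring $O_E$, e.g. via successive approximation or Hensel-type reasoning. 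Concatenating, the isomorphism classes in question biject with $\Aut_{\ol O_E}(\ol N,\ol\psi_N)\backslash\ol X_r$, giving the stated count.

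The main obstacle is the surjectivity of $\Aut_{O_E}(N,\psi_N)\to\Aut_{\ol O_E}(\ol N,\ol\psi_N)$ at the level of orbit actions — that is, the lifting of automorphisms of the reduced form to the integral form. Everything else is routine unravelling of definitions, but one must make sure that an isometry of $\ol N$ preserving the extra data indeed comes from an isometry of $N$; once that is in place the chain of bijections closes up and the proposition follows. I would note that this lifting is where the self-duality (unimodularity) of $(N,\psi_N)$ is essential, and it is precisely the input that makes the reduction to the combinatorial count over $\F_2$ legitimate.
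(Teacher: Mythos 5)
Your proposal is correct and follows essentially the same route as the paper: Proposition~\ref{52} is recorded there as the summary of the chain of identifications in \S\ref{sec:54} (fixing $(N,\psi_N)$ via Lemma~\ref{51}(2), passing to $\Aut_{O_E}(N,\psi_N)\backslash X_r$ because any isometry $M_1\isoto M_2$ extends to an automorphism of the canonically attached $(N,\psi_N)$, and then reducing mod $2N$). The one point you single out — surjectivity of $\Aut_{O_E}(N,\psi_N)\to\Aut_{\ol O_E}(\ol N,\ol\psi_N)$ — is indeed the only step the paper leaves implicit, and in this split case ($\ol O_E=\F_2\times\F_2$, automorphisms of the form $\diag(A^{-1},A^t)$ with $A\in\GL_r$) the lift is immediate, so your extra justification is harmless but not a different argument.
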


We now describe the set $\Aut_{\ol O_E}(\ol N,\ol \psi_N)\backslash
\ol X_r$. Let $\ol N=\ol N^1\oplus \ol N^2$ be the decomposition
induced by $\ol O_E=\F_2\times \F_2$, and let $p_i:\ol N\to \ol N^i$
be the $i$th projection for $i=1,2$. 
Fix a basis $e_1,\dots, e_r$ for $\ol N^1$ and a basis $f_1,\dots,
f_r$ for $\ol N^2$ such that $\ol \psi_N(e_i,f_j)=\delta_{i,j}$ for all
$i,j=1,\dots, r$. Using the basis $\{e_i, f_i\}_{i=1,\dots, r}$ we
identify the $\F_2$-vector space $\ol N$ with the $\F_2$-space
$\F_2^{2r}$ of column vectors. Let $\ol M$ be an $\F_2$-vector space
of $\ol N$ of dimension $r$. Choose a basis $v_1, \dots, v_r$ for $\ol
M$. The subspace $\ol M$ generates $\ol N$
over $\F_2\times \F_2$ if and only if $p_i(\ol M)=\ol N^i$ for
$i=1,2$. In this case, after a unique change of basis we may assume
that $v_i=e_i+u_i$ for $i=1,\dots,r$ and $u_1,\dots, u_r$ forms a
basis for $\ol N^2$. Write $u_j=\sum_i u_{ij} f_i$ and let
$U:=(u_{ij})\in \GL_r(\F_2)$. The matrix $U$ is uniquely determined by
$\ol M$. One computes
\[ \ol \psi_N(e_i+u_i,e_j+u_j)=\ol \psi_N(e_i,u_j)-\ol
\psi_N(e_j,u_i)=u_{ij}-u_{ji}. \]
It follows that $\ol M$ is isotropic for $\ol \psi_N$ if and only if
$U$ is symmetric. This shows $\ol X_r\simeq S_r$. With respect to the
basis $\{e_i,f_i\}$ the group $\Aut_{\ol O_E}(\ol N,\ol \psi_N)$ of
automorphisms is 
\[ \left \{
  \begin{pmatrix}
    A^{-1} &  0 \\ 0 & A^t 
  \end{pmatrix}\, ;\, A\in \GL_r(\F_2)\, \right\}\simeq \GL_r(\F_2). \]
From 
\[ \begin{pmatrix}
    A^{-1} &  0 \\ 0 & A^t 
  \end{pmatrix}
  \begin{pmatrix}
    I_r \\ U 
  \end{pmatrix}=
  \begin{pmatrix}
    I_r \\ A^t U A
  \end{pmatrix} A^{-1} \]
the action of the group $\GL_r(\F_2)\simeq \Aut_{\ol O_E}(\ol N,\ol
\psi_N)$ on $S_r$, by transport of the action of $\ol X_r$, is given by
\[ A\cdot U=A^t U A,\quad \forall\,A\in \GL_r(\F_2), U\in S_r. \]
We have shown

\begin{prop}\label{53}
  Notation as above. 
  There is a bijection
\[ \Aut_{\ol O_E}(\ol N,\ol \psi_N)\backslash \ol X_r\simeq S_r/\!\sim. \]
\end{prop}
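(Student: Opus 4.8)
The plan is to make everything explicit in the coordinates already fixed before the statement and to reduce the assertion to a routine computation in $\GL_r(\F_2)$. The three ingredients are: (i) a bijection $\ol X_r \isoto S_r$; (ii) an identification of $\Aut_{\ol O_E}(\ol N,\ol\psi_N)$ with $\GL_r(\F_2)$; (iii) a transport-of-structure computation showing that under (i)--(ii) the action becomes congruence of symmetric matrices, which is precisely the relation $\sim$ from the introduction.

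First I would treat (i). Given $\ol M\in\ol X_r$, the generation hypothesis says $p_i(\ol M)=\ol N^i$ for $i=1,2$; combined with $\dim_{\F_2}\ol M=r=\dim_{\F_2}\ol N^i$, each restriction $p_i|_{\ol M}\colon \ol M\to\ol N^i$ is then an isomorphism. Hence $\ol M$ is the graph of the linear isomorphism $p_2|_{\ol M}\circ(p_1|_{\ol M})^{-1}\colon\ol N^1\to\ol N^2$, recorded in the bases $\{e_i\}$, $\{f_j\}$ by a unique matrix $U\in\GL_r(\F_2)$; concretely $\ol M$ acquires the canonical basis $v_i=e_i+\sum_j u_{ji}f_j$. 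The isotropy of $\ol M$ for $\ol\psi_N$ is then read off from the displayed identity $\ol\psi_N(v_i,v_j)=u_{ij}-u_{ji}$, so $\ol M$ is isotropic if and only if $U$ is symmetric, i.e.\ $U\in S_r$. This yields the bijection $\ol X_r\isoto S_r$, $\ol M\mapsto U$.

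Next I would handle (ii). Any $g\in\Aut_{\ol O_E}(\ol N,\ol\psi_N)$ commutes with the $\F_2\times\F_2$-action, hence preserves each idempotent eigenspace $\ol N^i$, so $g=\diag(B,C)$ with $B,C\in\GL_r(\F_2)$ in the basis $\{e_i,f_i\}$. Since $\ol\psi_N$ vanishes on each $\ol N^i$ and restricts to the standard pairing between $\ol N^1$ and $\ol N^2$, compatibility with $\ol\psi_N$ forces $C=(B^t)^{-1}$; setting $A:=B^{-1}$ identifies $\Aut_{\ol O_E}(\ol N,\ol\psi_N)$ with $\GL_r(\F_2)$ via $A\mapsto\diag(A^{-1},A^t)$. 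Finally, for (iii), applying $\diag(A^{-1},A^t)$ to the column span of $\begin{pmatrix}I_r\\ U\end{pmatrix}$ produces the column span of $\begin{pmatrix}A^{-1}\\ A^tU\end{pmatrix}=\begin{pmatrix}I_r\\ A^tUA\end{pmatrix}A^{-1}$; as right multiplication by $A^{-1}$ does not change the span, the induced action on $S_r$ is $U\mapsto A^tUA$, which is exactly the relation $\sim$. Therefore the reduction map descends to a bijection $\Aut_{\ol O_E}(\ol N,\ol\psi_N)\backslash\ol X_r\isoto S_r/\!\sim$, as claimed.

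I do not expect a genuine obstacle here: the content is entirely the dictionary between ``maximal isotropic generating subspace modulo symmetries'' and ``invertible symmetric matrix modulo congruence.'' The only points needing a word of care are the uniqueness of the normalized basis $v_i=e_i+u_i$ (equivalently, that $U$ is well defined), and the harmlessness of the trailing factor $A^{-1}$ in the transport computation; both are immediate once one observes that the generation condition makes $p_1|_{\ol M}$ an isomorphism.
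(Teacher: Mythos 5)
Your proposal is correct and follows essentially the same route as the paper: the paper likewise parametrizes $\ol X_r$ by the matrix $U$ of the graph $v_i=e_i+u_i$, identifies $\Aut_{\ol O_E}(\ol N,\ol\psi_N)$ with $\{\diag(A^{-1},A^t):A\in\GL_r(\F_2)\}$, and transports the action to $U\mapsto A^tUA$ via the same column-span computation. Your extra justification that automorphisms must be block-diagonal and that the pairing forces $C=(B^t)^{-1}$ is a harmless elaboration of what the paper states without proof.
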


\begin{prop}[Witt Cancellation]\label{54}
  Let $M_1$ and $M_2$ be two self-dual skew-Hermitian modules. Suppose
  there is an isomorphism
\[ M_1\oplus L^{\oplus s}\simeq M_2 \oplus L^{\oplus s} \]
of skew-Hermitian modules for some integer $s\ge 0$. Then $M_1$ is
isometric to $M_2$. 
\end{prop}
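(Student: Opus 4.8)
The plan is to reduce the cancellation statement to the classification of self-dual skew-Hermitian modules that has been built up in this section, rather than to attempt a direct "geometric" cancellation argument on the pairing. The key point is that in Case (b), a self-dual skew-Hermitian module $M$ has an underlying $R$-module isomorphic to $R^r\oplus O_E^s$ by \thmref{41}, and the integers $r,s$ are determined by $M$. Since $L$ has underlying $R$-module $O_E$, an isomorphism $M_1\oplus L^{\oplus s}\simeq M_2\oplus L^{\oplus s}$ of skew-Hermitian modules is in particular an isomorphism of $R$-modules, so $M_1$ and $M_2$ have the same $R$-module invariants $(r,s_1)$ and $(r,s_2)$; comparing ranks and the decompositions forces $s_1=s_2$. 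Thus it suffices to treat the two extreme shapes separately and then recombine.

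First I would dispose of the case $M_i\simeq R^r$. Here one invokes \prop{51}(2): any self-dual skew-Hermitian $M$ with a free part decomposes as $M_1'\oplus L^{\oplus s}$ with $M_1'\simeq R^r$ self-dual. Applied to the two sides of the hypothesis, together with the $R$-module bookkeeping above, we are reduced to showing: if $M_1\simeq M_2\simeq R^r$ are self-dual skew-Hermitian and $M_1\oplus L^{\oplus s}\simeq M_2\oplus L^{\oplus s}$, then $M_1\simeq M_2$. For this I would pass to the associated lattices $N_i$ (the smallest $O_E$-module containing $M_i$) with pairing $\psi_{N_i}=2\psi$, as in \S\ref{sec:54}. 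By \prop{51}(2) the $(N_i,\psi_{N_i})$ are all isometric to a fixed $(N',\psi_{N'})$ of rank $r+s$, and likewise for the stabilized modules; moreover $M_i$ is recovered from $(N_i,\psi_{N_i})$ as an element of the set $X_r$, i.e. as a point of $\Aut_{\ol O_E}(\ol N,\ol\psi_N)\backslash \ol X_r\simeq S_r/\!\sim$ by \prop{53}. The stabilization $M_i\mapsto M_i\oplus L^{\oplus s}$ corresponds, on the level of symmetric matrices, to $U\mapsto U\oplus H$ where $H$ is the $(2s)\times(2s)$ "hyperbolic" symmetric matrix coming from $L^{\oplus s}$ (concretely the antidiagonal/identity block over $\F_2$); an isomorphism after stabilization says $U_1\oplus H\sim U_2\oplus H$ in $S_{r+2s}/\!\sim$. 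So the whole problem collapses to a statement about congruence of symmetric invertible matrices over $\F_2$: if $U_1\oplus H\sim U_2\oplus H$ then $U_1\sim U_2$. This is the cancellation law for the orthogonal-sum structure on $S_\bullet/\!\sim$ over $\F_2$, and it is standard (the congruence classes of invertible symmetric matrices over $\F_2$ form a monoid under $\oplus$ in which $H$ is cancellable — indeed, over $\F_2$ every invertible symmetric matrix is congruent to a sum of the two indecomposables $(1)$ and $H$ subject to $H\oplus H\sim (1)^{\oplus 4}$-type relations, and one checks cancellability directly from the known list; see \lemmaref{57} for the count). I would either cite this or prove it in a short lemma in Section~\ref{sec:06}.

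To assemble the general case, write $M_i\simeq M_i'\oplus L^{\oplus s_i}$ with $M_i'\simeq R^r$ self-dual (\prop{51}(2)), noting $s_1=s_2=:\sigma$ from the $R$-module invariants. Then the hypothesis becomes $M_1'\oplus L^{\oplus(\sigma+s)}\simeq M_2'\oplus L^{\oplus(\sigma+s)}$, and the free-part case just treated gives $M_1'\simeq M_2'$, hence $M_1\simeq M_2$.

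The main obstacle I expect is the purely combinatorial cancellation lemma for $S_\bullet/\!\sim$ under $\oplus H$ over $\F_2$: one must be careful because $\mathrm{GL}$-congruence over $\F_2$ does not come from a quadratic form in characteristic $2$, so the usual Witt cancellation for quadratic/symmetric bilinear forms does not apply verbatim, and indecomposability/uniqueness of the $\oplus$-decomposition into copies of $(1)$ and $H$ has to be argued (or the relations among them made explicit) before cancellability of $H$ is visible. Everything else is essentially bookkeeping with \thmref{41}, \propref{51}, and \propref{53}.
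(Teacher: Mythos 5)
Your opening reductions are fine and agree with the paper: Lemma~\ref{51}(2) lets you replace $M_1,M_2$ by their free parts, and the $R$-module invariants from Theorem~\ref{41} force the numbers of $L$-summands to match. The gap is the central step, where you assert that stabilization $M\mapsto M\oplus L^{\oplus s}$ is computed by $U\mapsto U\oplus H$ in the parametrization of Propositions~\ref{52} and~\ref{53}. That parametrization by $S_r/\!\sim$ applies only to modules that are \emph{free} over $R$: the matrix $U$ records the position of the $r$-dimensional isotropic subspace $\ol M=M/2N$ inside the $2r$-dimensional symplectic space $\ol N=N/2N$. But $M\oplus L^{\oplus s}$ is not free (its underlying $R$-module is $R^r\oplus O_E^s$), and it does not lie in any set $X_{r'}$: its $O_E$-hull is $N\oplus L^{\oplus s}$, the reduction of $2\psi$ modulo $2(N\oplus L^{\oplus s})$ is degenerate (it vanishes identically on $L^{\oplus s}/2L^{\oplus s}$), and $(M\oplus L^{\oplus s})/2(N\oplus L^{\oplus s})$ has dimension $r+2s$, exceeding the maximal isotropic dimension $r+s$. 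Dimensionally, a free module with matrix $U\oplus H^{\oplus s}$ would have $\Z_2$-rank $2r+4s$, not $2r+2s$. So the hypothesis does not translate into ``$U_1\oplus H^{\oplus s}\sim U_2\oplus H^{\oplus s}$ in $S_{r+2s}/\!\sim$.'' The obvious repair---first classify modules of the shape $M'\oplus L^{\oplus s}$ and show the class of $M'$ is an invariant---is circular, since that classification (Corollary~\ref{56}) is deduced from the cancellation proposition.

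The paper instead kills the $L^{\oplus s}$ factor canonically before any matrix bookkeeping: the largest $O_E$-submodule of $M_i\oplus L^{\oplus s}$ is $2N_i\oplus L^{\oplus s}$, so the given isometry extends to the $O_E$-hulls and descends to an isomorphism of the nondegenerate symplectic $\F_2$-spaces $(N_i\oplus L^{\oplus s})/(2N_i\oplus L^{\oplus s})\simeq \ol N_i$ carrying $\ol M_1$ to $\ol M_2$; this is then lifted back to an isometry $N_1\simeq N_2$ taking $M_1$ to $M_2$. No cancellation inside $S_\bullet/\!\sim$ is ever needed. Your worry about that combinatorial lemma is legitimate in isolation---over $\F_2$ cancellation of $(1)$ genuinely fails, e.g.\ $(1)^{\oplus 3}\sim(1)\oplus H$ while $(1)^{\oplus 2}\not\sim H$, although cancellation of $H$ does hold by Lemma~\ref{57}---but it is a symptom of the misidentification rather than the actual difficulty of the proposition.
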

\begin{proof}
  By Lemma~\ref{51} (2), we have decompositions $M_1=M_1'\oplus
  L^{\oplus s'}$ and $M_2=M_2'\oplus L^{\oplus s''}$ as skew-Hermitian
  modules such that $M_1'$ and $M_2'$ are free as $R$-modules. Note
  that $s''=s'$.
  Replacing $M_1$ and $M_2$ by $M_1'$ and $M_2'$,
  respectively, we may assume that $M_1\simeq M_2\simeq R^r$ as
  $R$-modules. Write $\psi_i$ for the pairings on $M_i\oplus L^{\oplus
  s}$, for $i=1,2$. Let $N_i$ be the smallest
  $O_E$-module in the vector space $E M_i$ containing $M_i$. Then
  $N_i\oplus L^{\oplus s}$ is the smallest 
  $O_E$-module in the vector space $E(M_i\oplus L^{\oplus s})$
  containing $M_i\oplus L^{\oplus s}$. Put $\psi'_i:=2\psi_i$, which
  is a $\Z_2$-valued skew-Hermitian form on $N_i\oplus L^{\oplus s}$. The
  pairing $\psi_i'$ induces a non-degenerate pairing $\ol {\psi'_i}$ 
  on the $\F_2$-vector space 
\[ (N_i\oplus L^{\oplus s})/(2N_i\oplus L^{\oplus s})\simeq
  N_i/2N_i=:\ol N_i, \]
and the subspace 
\[ (M_i\oplus L^{\oplus s})/(2N_i\oplus L^{\oplus s})\simeq
  M_i/2N_i=:\ol M_i \] 
is a maximal isotropic subspace with respect to $\ol {\psi'_i}$. 

Let $\alpha:M_1\oplus L^{\oplus s}\simeq M_2 \oplus L^{\oplus s}$ be
an isomorphism of skew-Hermitian modules. The map $\alpha$ lifts to an
isomorphism $\beta: N_1\oplus L^{\oplus s}\simeq N_2 \oplus L^{\oplus
  s}$, and $\beta$ induces an isomorphism $\bar \beta: \ol N_1\simeq
\ol N_2$ of symplectic $\F_2$-spaces such that $\bar \beta(\ol
M_1)=\ol M_2$. We lift Lagrangian bases $\{x_i\}$ for $\ol N_1$) and
$\{\bar \beta (x_i)\}$ for $\ol N_2$ to Lagrangian bases $X_i$ for
$N_1$ and $Y_i$ for $N_2$, respectively. The map $\gamma:N_1\simeq
N_2$ which sends $X_i$ to $Y_i$ is an isomorphism of skew-Hermitian
modules. Since $\gamma(2N_1)=2N_2$ and $\gamma(\ol M_1)=\ol M_2$, 
it gives an isomorphism from $M_1$ to $M_2$. 
This proves the proposition. \qed
\end{proof}


\begin{remark}\label{55}
Proposition~\ref{54} (also Proposition~\ref{67}) 
is not covered by a general Witt type
cancellation theorem \cite[Theorem 3]{bayer-fluckiger-fainsilber}
proved by Bayer-Fluckiger and Fainsilber, as the condition $a+\bar a=1$
for some $a\in R$ is not fulfilled.
\end{remark}

\begin{cor}\label{56}
  Let $M$ be a self-dual skew-Hermitian module of $\Z_2$-rank
  $2n$. Then there are unique non-integers $r$ and $s$ with $r+s=n$
  and a self-duel skew-Hermitian module $M_1$ which is free of rank
  $r$ such that
\[ M\simeq M_1\oplus L^{\oplus s}. \]
Moreover, $M_1$ is uniquely determined, up to isomorphism,  by $M$.
\end{cor}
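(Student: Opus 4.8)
The plan is to read the corollary off from Lemma~\ref{51} and Proposition~\ref{54}; essentially all the work has already been done, and what remains is bookkeeping. For existence, I would simply apply Lemma~\ref{51}(2) to $M$: it produces a decomposition $M\simeq M_1\oplus L^{\oplus s}$ of skew-Hermitian modules with $M_1$ a free $R$-module of rank $r$ that is self-dual for the restricted pairing. The relation $r+s=n$ holds because, as observed in \S\ref{sec:51}, the two $\sigma_i$-eigenspaces of $V=M\otimes_{\Z_2}\Q_2$ are isotropic of equal dimension, so $V$ is a free $E$-module of rank $n$; then $\rank_{\Z_2}M_1=2r$ and $\rank_{\Z_2}L^{\oplus s}=2s$ force $r+s=n$.

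For the uniqueness of the pair $(r,s)$ I would discard the pairing and use only the underlying $R$-module structure. Since $L\simeq O_E$ and $M_1\simeq R^r$ as $R$-modules, we get $M\simeq R^r\oplus O_E^{s}$ as an $R$-module, and Theorem~\ref{41}(2) says the multiplicities of the indecomposable summands are intrinsic to $M$ (and, again because $V$ is a free $E$-module, the two multiplicities appearing there coincide, each contributing one copy of $O_E$ per unit of $s$). Hence $r$ and $s$ depend only on $M$.

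For the uniqueness of $M_1$ up to isometry, suppose we have two such decompositions $M\simeq M_1\oplus L^{\oplus s}$ and $M\simeq M_1'\oplus L^{\oplus s}$ with $M_1,M_1'$ free of rank $r$ and self-dual. Composing the isomorphisms gives an isometry $M_1\oplus L^{\oplus s}\simeq M_1'\oplus L^{\oplus s}$, and Proposition~\ref{54} (Witt cancellation) applies directly to yield $M_1\simeq M_1'$ as skew-Hermitian modules. This is the ``moreover'' clause.

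Thus the corollary has no genuine obstacle of its own: the real content lies in Proposition~\ref{54}, whose proof is the subtle step (lifting an isometry through reduction mod $2$ on the ambient self-dual lattice $(N,\psi_N)$ and matching the maximal isotropic subspaces $\ol M_i$), together with the structure theorem Theorem~\ref{41}. The one point requiring care is purely notational: checking that the integer $s$ of Lemma~\ref{51}, the multiplicity in the $R$-module decomposition of Theorem~\ref{41}, and the exponent in $L^{\oplus s}$ all refer to the same $s$, which is immediate from $L\simeq O_E$ and $r+s=n$.
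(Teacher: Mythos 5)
Your proposal matches the paper's own proof, which simply cites Lemma~\ref{51}(2) for the existence of the decomposition and Proposition~\ref{54} for the uniqueness of $M_1$ up to isometry; the uniqueness of $(r,s)$ via the $R$-module structure theorem (Theorem~\ref{41}) was already established in \S\ref{sec:51} and is implicitly invoked there. Your write-up just makes these same ingredients explicit, so it is correct and takes essentially the same route.
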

\begin{proof}
  This follows immediately from Lemma~\ref{51} (2) and
  Proposition~\ref{54}. \qed
\end{proof}

By Corollary~\ref{56} and Propositions~\ref{52} and \ref{53},
Theorem~\ref{11} is proved.

\subsection{}
\label{sec:55}
It is well-known that the set $S_r/\!\sim$ parametrizes equivalence
classes of non-degenerate symmetric $\F_2$-spaces $(W,\varphi)$ of
dimension $r$. We write $(1)$ for the non-degenerate one-dimensional
symmetric space $\F_2$ with the pairing $\varphi(e_1,e_1)=1$, and write
\[
\begin{pmatrix}
  0 & 1 \\ 1 & 0
\end{pmatrix} \]
for the non-degenerate two-dimensional symmetric space
$\F_2\oplus \F_2$ with the pairing 
\[ \varphi (e_1,e_1)=\varphi(e_2,e_2)=0, \quad \varphi(e_1,e_2)=1. \]

\begin{lemma}\label{57} 
  Let $(W,\varphi)$ be a symmetric space over $\F_2$ of dimension $r\ge
  1$. \\ If $r$ is odd, then 
\[ (W,\varphi)\simeq (1)\oplus \begin{pmatrix}
  0 & 1 \\ 1 & 0
\end{pmatrix} \oplus \dots \oplus \begin{pmatrix}
  0 & 1 \\ 1 & 0
\end{pmatrix}. \]
If $r$ is even, then 
\begin{equation*}
  \begin{split}
    (W,\varphi)& \simeq  \begin{pmatrix}
  0 & 1 \\ 1 & 0
\end{pmatrix} \oplus \dots \oplus \begin{pmatrix}
  0 & 1 \\ 1 & 0 \\
\end{pmatrix}, \ \text{or} \\
  & \simeq  \begin{pmatrix}
  0 & 1 \\ 1 & 0
\end{pmatrix} \oplus \dots \oplus \begin{pmatrix}
  0 & 1 \\ 1 & 0 \\
\end{pmatrix} \oplus (1)\oplus (1).
  \end{split}
\end{equation*}

In particular, we have
\[ \# S_r/\!\sim=
\begin{cases}
  1, & \text{if $r$ is odd},\\
  2, & \text{if $r$ is even}. 
\end{cases} \]
\end{lemma}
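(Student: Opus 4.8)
The plan is to classify non-degenerate symmetric bilinear forms over $\F_2$ outright, by induction on $r$, and then simply count the resulting normal forms; recall that, as noted just before the lemma, $S_r/\!\sim$ is in bijection with the set of isometry classes of such forms of dimension $r$. Write $\mathbf{h}$ for the hyperbolic plane $\left(\begin{smallmatrix}0&1\\1&0\end{smallmatrix}\right)$, and call a symmetric form \emph{alternating} if $\varphi(x,x)=0$ for all $x$; being alternating is plainly an isometry invariant, and this is the invariant that will distinguish the two even-dimensional classes.

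First I would produce a crude normal form. If $(W,\varphi)$ is alternating, then it is a non-degenerate alternating space, hence of even dimension and isometric to $\mathbf{h}^{\oplus r/2}$ by the usual symplectic argument: choose $x\neq 0$, use non-degeneracy to find $y$ with $\varphi(x,y)=1$, note $\langle x,y\rangle\simeq\mathbf{h}$ is non-degenerate, split off $W=\langle x,y\rangle\oplus\langle x,y\rangle^{\bot}$, and iterate. If $(W,\varphi)$ is not alternating, pick $e$ with $\varphi(e,e)=1$; then $\langle e\rangle$ is non-degenerate, so $W=\langle e\rangle\oplus\langle e\rangle^{\bot}\simeq(1)\oplus W'$ with $\dim W'=r-1$. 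Iterating on the complement until it becomes alternating (possibly zero), and applying the symplectic argument to that alternating tail, gives
\[ (W,\varphi)\simeq (1)^{\oplus a}\oplus\mathbf{h}^{\oplus b},\qquad a+2b=r, \]
with $a,b\ge 0$, and here $a\ge 1$ precisely when $(W,\varphi)$ is not alternating.

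Next I would compress $a$ using the identity $(1)^{\oplus 3}\simeq (1)\oplus\mathbf{h}$ over $\F_2$: on $\F_2^3=\langle e_1,e_2,e_3\rangle$ with the diagonal form, the vectors $e_1+e_2+e_3$, $e_1+e_2$, $e_1+e_3$ form a basis whose Gram matrix is $(1)\oplus\mathbf{h}$ — a one-line check. Each application turns $(1)^{\oplus a}\oplus\mathbf{h}^{\oplus b}$ into $(1)^{\oplus(a-2)}\oplus\mathbf{h}^{\oplus(b+1)}$ as long as $a\ge 3$, so we may assume $a\in\{0,1,2\}$. Parity now decides: if $r$ is odd then $a$ is odd, forcing $a=1$ and $(W,\varphi)\simeq(1)\oplus\mathbf{h}^{\oplus(r-1)/2}$ — the odd-case normal form, and a single isometry class; if $r$ is even then $a$ is even, so $a\in\{0,2\}$ and $(W,\varphi)$ is one of $\mathbf{h}^{\oplus r/2}$ or $(1)^{\oplus 2}\oplus\mathbf{h}^{\oplus(r-2)/2}$, the two listed forms. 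Finally, to see these two are not isometric — whence $\#S_r/\!\sim=2$ for $r$ even — note that $\mathbf{h}^{\oplus r/2}$ is alternating while $(1)^{\oplus 2}\oplus\mathbf{h}^{\oplus(r-2)/2}$ is not, and alternating-ness is an isometry invariant.

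The only computational step is checking the cubic identity $(1)^{\oplus 3}\simeq(1)\oplus\mathbf{h}$; the rest is the standard ``split off a nonsingular line'' induction together with the symplectic basis theorem over a field. I expect the only thing needing care is the bookkeeping that every even $a>0$ really collapses to $a=2$ while $a=0$ stays $0$ (so that one gets \emph{exactly} two classes in the even case, not fewer), and this is precisely what the alternating invariant guarantees; I do not anticipate a genuine obstacle.
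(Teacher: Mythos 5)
Your proof is correct, but it takes a genuinely different route from the paper's. The paper exploits the characteristic-$2$ fact that the set $N(W)=\{x\in W:\varphi(x,x)=0\}$ of isotropic vectors is a \emph{linear subspace}: writing $N(W)_n$ for the radical of $\varphi|_{N(W)}$ and $W_2\subset N(W)$ for a complement of it, the space $W_2$ is non-degenerate alternating (hence a sum of hyperbolic planes $\mathbf{h}=\left(\begin{smallmatrix}0&1\\1&0\end{smallmatrix}\right)$), and the paper then argues directly that $W_1:=W_2^{\bot}$ has dimension at most $2$ and is an orthogonal sum of copies of $(1)$. Since $N(W)$ and its radical are canonical, $\dim W_1\in\{0,1,2\}$ is automatically an isometry invariant, which is how the paper gets \emph{exactly} two classes in even dimension without further argument. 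You instead diagonalize greedily to reach $(1)^{\oplus a}\oplus\mathbf{h}^{\oplus b}$ with $a$ a priori unbounded, and then collapse $a$ to $\{0,1,2\}$ via the explicit isometry $(1)^{\oplus 3}\simeq(1)\oplus\mathbf{h}$ --- a relation the paper never needs --- finishing with the alternating/non-alternating dichotomy to separate the two even-dimensional normal forms. The trade-off: your argument is more computational (one explicit $3\times 3$ base change) but entirely standard, whereas the paper's is shorter once one accepts that $N(W)$ is a subspace; the one point you must supply by hand, the non-isometry of $\mathbf{h}^{\oplus r/2}$ and $(1)^{\oplus 2}\oplus\mathbf{h}^{\oplus(r-2)/2}$, is what the paper gets for free from canonicity, and your use of the alternating invariant settles it correctly. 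Both arguments are complete.
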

\begin{proof}
  Denote by $N(W)$ the set of vectors $x\in W$ such that
  $\varphi(x,x)=0$. It is easy to show that $N(W)$ is a subspace. 
  Let $N(W)_n$ be the null subspace of $N(W)$ and $W_2\subset N(W)$ be
  a complement of $N(W)_n$. Then $\varphi$ is non-degenerate on the
  subspace $W_2$ and we have 
\[ W_2\simeq \begin{pmatrix}
  0 & 1 \\ 1 & 0
\end{pmatrix} \oplus \dots \oplus \begin{pmatrix}
  0 & 1 \\ 1 & 0 \\
\end{pmatrix}. \]
Let $W_1:=W_2^{\bot}\subset W$ be the orthogonal complement of
$W_2$; one has $W=W_1\oplus W_2$. We have $N(W_1)=N(W)_n\subset W_1$. 
If $\dim N(W_1)\ge 2$, then $\varphi$
is degenerate on $W_1)$. Therefore, $\dim W_1\le 2$. As there are no
vectors $x,y\in W_1$ such that $\varphi(x,x)=\varphi(y,y)=0$ and
$\varphi(x,y)=1$, the symmetric space $(W_1,\varphi)$ is isomorphic to
the direct sum of copies of $(1)$. Therefore, the lemma
follows.\qed 
\end{proof}

\section{Proof of Theorem~\ref{12}.}
\label{sec:06}


\subsection{}
\label{sec:61}
 
In this section we assume that $p \equiv 3 \ (\! \mod 8)$. Thus, the
algebra $E$ is a unramified quadratic field extension of $\Q_2$. 
Recall that 
\[ R=\Z_2[\omega]=\Z_2[X]/(X^2+2X+p+1)\subset
O_E=\Z_2[\alpha]/(X^2+X+(p+1)/4), \]
$\omega=\pi-1$, and $\pi\in R$ is an element with $\pi^2=-p$. 
Let $(M,\psi)$ be a 
skew-Hermitian module over $R$ with $\Z_2$-rank $2n$, and
let $V:=M\otimes_{\Z_2} \Q_2$. There is a unique non-degenerate $E$-valued
skew-Hermitian form
\begin{equation}
  \label{eq:61}
  \<\, ,\, \>:V\times V\to E  
\end{equation}
such that $\<ax,by\>=a \bar b \< x,y \>$ for all $a, b\in E$ and $x,y\in
V$,  and $\psi(x,y)=\Tr_{E/\Q_2}\<x,y\>$ for all $x, y \in V$. Since
$E/\Q_2$ is unramified, the inverse different $\scrD_{E/\Q_2}^{-1}$ is
equal to $O_E$. Put $(\, ,\,):=\pi \<\, ,\,\>$, which is a Hermitian
form on $V$. Note that $\<M, M\>\subset O_E$ if and only if
$(M,M)\subset O_E$, as the element $\pi=1+\omega$ lands in
$1+2O_E=R^\times \subset O_E^\times$. 

\begin{lemma}\label{61} \
\begin{enumerate}
  \item One has $(M,M)\subset 2^{-1}R$. Under the assumption that $M$
    is self-dual with respect to the pairing $\psi$, 
    the condition $(M,M)\subset
    O_E$ holds if and only if $M$ is invariant under the $O_E$-action.

\item The $R$-lattice $M$ is self-dual with respect to the pairing
 $\psi$ if and only if $M$ is self-dual with resspt to the pairing
 $2(\,,\,)$. 
\end{enumerate}
\end{lemma}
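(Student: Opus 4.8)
The plan is to translate both parts into statements about the $E$-valued skew-Hermitian form $\langle\, ,\,\rangle$ together with the codifferents of $R$ and $O_E$ over $\Z_2$. Two preliminaries are needed. First, since $R=\Z_2[\omega]$ with $\omega$ a root of $g(X)=X^2+2X+(p+1)$, the different of $R/\Z_2$ is generated by $g'(\omega)=2(\omega+1)=2\pi$; as $\pi\in R^\times$ (indeed $\pi\in 1+2O_E$), this different is $(2)$, so the codifferent $R^\vee=\{z\in E:\Tr_{E/\Q_2}(zR)\subset\Z_2\}$ equals $2^{-1}R$. Likewise $O_E=\Z_2[\alpha]$ with $\alpha$ a root of $X^2+X+(p+1)/4$ has different $(2\alpha+1)=(\pi)=O_E$, matching the fact that $E/\Q_2$ is unramified, so $O_E^\vee=O_E$. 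Second, since $M$ is an $R$-module and $\bar R=R$, for every $x\in V$ the set $\langle x,M\rangle$ is an $R$-submodule of $E$; hence $\psi(x,M)\subset\Z_2$ (that is, $\Tr_{E/\Q_2}\langle x,M\rangle\subset\Z_2$) holds if and only if $\langle x,M\rangle\subset R^\vee=2^{-1}R$.

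For part (1), the inclusion $(M,M)\subset 2^{-1}R$ follows at once: for $x,y\in M$ and $r\in R$ one has $\Tr_{E/\Q_2}(r\langle x,y\rangle)=\psi(rx,y)\in\Z_2$, so $\langle x,y\rangle\in R^\vee=2^{-1}R$, and therefore $(M,M)=\pi\langle M,M\rangle\subset 2^{-1}R$ because $\pi$ is a unit. Now assume $M$ is self-dual. If $M$ is $O_E$-stable, then for $a\in O_E$ and $x,y\in M$ we get $\Tr_{E/\Q_2}(a\langle x,y\rangle)=\psi(ax,y)\in\Z_2$, whence $\langle x,y\rangle\in O_E^\vee=O_E$ and $(M,M)=\pi\langle M,M\rangle\subset O_E$ (this implication uses only that $\psi$ is $\Z_2$-valued on $M$). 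Conversely, suppose $(M,M)\subset O_E$, equivalently $\langle M,M\rangle=\pi^{-1}(M,M)\subset O_E$. Then for all $a\in O_E$, $x\in M$, $m\in M$ we have $\psi(ax,m)=\Tr_{E/\Q_2}(a\langle x,m\rangle)\in\Tr_{E/\Q_2}(O_E)\subset\Z_2$, so $ax$ lies in the dual lattice of $M$, which by self-duality is $M$; hence $O_E M\subset M$. This last step is the only place self-duality is genuinely used.

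For part (2), I unwind the two notions of self-duality. By the second preliminary, the dual of $M$ with respect to $\psi$ is $\{x\in V:\langle x,M\rangle\subset 2^{-1}R\}$, so $M$ is self-dual for $\psi$ precisely when $M=\{x:\langle x,M\rangle\subset 2^{-1}R\}$. On the other hand, by part (1) the Hermitian form $2(\,,\,)=2\pi\langle\,,\,\rangle$ is $R$-valued on $M\times M$, and its associated dual lattice is $\{x\in V:2(x,M)\subset R\}=\{x:\langle x,M\rangle\subset (2\pi)^{-1}R\}=\{x:\langle x,M\rangle\subset 2^{-1}R\}$, once more using $\pi\in R^\times$. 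The two dual lattices coincide, so $M$ is self-dual for $\psi$ if and only if it is self-dual for $2(\,,\,)$.

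The steps I expect to be routine are the computation of the codifferents (standard for monogenic orders) and the identification of the two dual lattices with the displayed $R$-submodule conditions. The one point that requires an idea rather than bookkeeping is the implication in part (1) that, for a self-dual $M$, the condition $(M,M)\subset O_E$ forces $M$ to be stable under the whole maximal order: there one tests membership in $M$ by pairing $ax$ against $M$ and invokes $\Tr_{E/\Q_2}(O_E)\subset\Z_2$.
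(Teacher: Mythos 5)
Your proposal is correct and follows essentially the same route as the paper: both identify the codifferents $R^\vee=2^{-1}R$ and $O_E^\vee=O_E$, deduce $O_E$-stability from $(M,M)\subset O_E$ by pairing $ax$ against $M$ and invoking self-duality (the paper phrases this via the $O_E$-span $\wt M$ satisfying $\wt M\subset M^\vee=M$, which is the same argument), and prove (2) by observing that the two dual lattices coincide because $\pi\in R^\times$. Your explicit computation of $R^\vee$ via $g'(\omega)=2\pi$ merely fills in what the paper calls ``easy to check.''
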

\begin{proof}
  (1) The element $\<x,y\>$ for $x, y\in M$ satisfies the property
      $\Tr_{E/\Q_2} (\<x,y\>R)\subset \Z_2$. Therefore, it lands in
      the dual lattice $R^\vee$ of $R$ for the pairing $(a,b)\mapsto
      \Tr_{E/\Q_2} (ab)$. It is easy to check that $R^\vee=2^{-1}R$, and
      hence the first part is proved. 
 
      Let $\wt M$ be the $O_E$-submodule in $V$ generated by $M$. If
      $M$ is invariant under the $O_E$-action, then $\<M,M\>\subset
      O_E^\vee=O_E$. Conversely, if $\<M,M\>\subset O_E$, then $\<\wt
      M,\wt M\>\subset O_E$. This yields $\psi(\wt M,\wt M)\subset
      \Z_2$. As $M$ is self-dual, one has $\wt M=M$. This proves (1).   

   (2) For any element $x\in V$, one has
\[ (x,M)\in {2}^{-1} R \iff \psi(x,M)\in \Z_2. \]
Therefore, the assertion follows. \qed
\end{proof}

From now on, we assume that $M$ is self-dual with respect to the
pairing $\psi$. 

\begin{lemma}\label{62}
  One has $N_{E/\Q_2}(R^\times)=\Z_2^\times$.
\end{lemma}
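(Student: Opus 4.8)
The plan is to compute the norm group $N_{E/\Q_2}(R^\times)$ directly, using the explicit description $R=\Z_2[\omega]$ with $\omega^2+2\omega+(p+1)=0$ and the relation $\pi=1+\omega$, $\pi^2=-p$. Since $E/\Q_2$ is unramified, it is standard that $N_{E/\Q_2}(O_E^\times)=\Z_2^\times$, and the only issue is whether the (possibly smaller) group $R^\times$ already surjects onto $\Z_2^\times$ under the norm. Note $R^\times=\{a+b\omega : a\in\Z_2^\times,\ b\in\Z_2\}$ because $R/2R=\F_2[\omega]/(\omega^2)$ is local with maximal ideal $(\omega, 2)$, so a unit is precisely an element whose constant term is a $2$-adic unit. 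For such an element, $N_{E/\Q_2}(a+b\omega)=(a+b\omega)(a+b\bar\omega)=a^2+ab(\omega+\bar\omega)+b^2\omega\bar\omega=a^2-2ab+b^2(p+1)$, using $\omega+\bar\omega=-2$ and $\omega\bar\omega=p+1$.

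First I would show the norm is surjective onto $\Z_2^\times$. Taking $b=0$ gives squares $a^2$ with $a\in\Z_2^\times$, which is the subgroup $(\Z_2^\times)^2$ of index $4$ in $\Z_2^\times$. So it suffices to realize representatives of the nontrivial cosets. Taking $a=1$ and varying $b$, the norm becomes $1-2b+(p+1)b^2$; with $b=1$ this is $p$, and since $p\equiv 3\pmod 8$, $p$ is a unit that is $\equiv 3\pmod 8$, hence not a square and not in the same coset as $1$. Varying $b$ over $2\Z_2$ gives norms $\equiv 1\pmod 8$ times adjustments, and more flexibly, I can instead argue abstractly: the map $\Z_2^\times\times\Z_2\to\Z_2^\times$, $(a,b)\mapsto a^2-2ab+b^2(p+1)$, when restricted and reduced mod $8$, hits enough classes. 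Concretely, $a=1,b=0\mapsto 1$; $a=1,b=1\mapsto p\equiv 3$; $a=1,b=2\mapsto 1-4+4(p+1)=4p+1\equiv 5\pmod 8$; $a=1,b=3\mapsto 1-6+9(p+1)=9p+4\equiv 9p+4\pmod{8}$, and $9p\equiv p\equiv 3$, giving $7\pmod 8$. Thus all four classes $1,3,5,7$ in $\Z_2^\times/(\Z_2^\times)^2$ are attained, so $N_{E/\Q_2}(R^\times)=\Z_2^\times$.

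For the reverse inclusion, $N_{E/\Q_2}(R^\times)\subset\Z_2^\times$ is immediate: if $u\in R^\times$, then $u^{-1}\in R^\times$, so $N(u)N(u^{-1})=1$ in $\Z_2$, forcing $N(u)\in\Z_2^\times$. Alternatively, $N(u)=u\bar u\in R\cap\Q_2=\Z_2$ and it is a unit since it has a multiplicative inverse in $\Z_2$. Combining the two inclusions yields the equality.

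I expect the only mild subtlety to be the bookkeeping modulo $8$ to confirm that the explicit values land in all four square classes; this is entirely routine but must be done carefully using $p\equiv 3\pmod 8$. No serious obstacle is anticipated, as the computation is concrete and finite. One could also phrase the surjectivity more cleanly by noting that the binary form $a^2-2ab+(p+1)b^2$ has discriminant $4-4(p+1)=-4p$, which is a unit times $-1$ in $\Z_2^\times/(\Z_2^\times)^2$, and invoking the local theory of binary quadratic forms over $\Z_2$ to see it represents all of $\Z_2^\times$; but the direct mod-$8$ check is shortest and self-contained given what the paper has set up.
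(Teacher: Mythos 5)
Your proof is correct, but it takes a genuinely different route from the paper's. The paper's proof is essentially two lines: it observes that $R^\times = 1+2O_E$ and $\Z_2^\times = 1+2\Z_2$ are exactly the first principal congruence subgroups $U_E^{(1)}$ and $U_{\Q_2}^{(1)}$, and then quotes the standard fact (cited from Platonov--Rapinchuk) that for an unramified extension of non-Archimedean local fields the norm carries $U_E^{(i)}$ onto $U_F^{(i)}$ for every $i\ge 1$. You instead work with the explicit norm form: you correctly identify $R^\times=\{a+b\omega:\ a\in\Z_2^\times,\ b\in\Z_2\}$ from the locality of $R$ with maximal ideal $(2,\omega)$, compute $N(a+b\omega)=a^2-2ab+(p+1)b^2$ from $\omega+\bar\omega=-2$, $\omega\bar\omega=p+1$, and check that the values $1,\ p,\ 4p+1,\ 9p+4$ (for $a=1$, $b=0,1,2,3$) are congruent to $1,3,5,7$ mod $8$ when $p\equiv 3\ (\mathrm{mod}\ 8)$, hence represent all four classes of $\Z_2^\times/(\Z_2^\times)^2$; since the image is a subgroup containing $(\Z_2^\times)^2=1+8\Z_2$, surjectivity follows, and the reverse inclusion is trivial. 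All of these steps are sound. The paper's argument is shorter and conceptually cleaner but leans on an external result from local norm theory; yours is elementary and entirely self-contained at the cost of a small finite computation, and has the minor advantage of making visible exactly which elements of $R^\times$ realize each square class.
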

\begin{proof}
  We use the following basic fact in number theory; see
  \cite[Corollary, p.~7]{platonov-rapinchuk:agnt}. 
  Suppose $E/F$ is a 
  unramified finite extension of non-Archimedean local fields. For any
  integer $i\ge 1$, let
  $U_E^{(i)}:=1+\pi_E^i O_E \subset O_E^\times$ and
  $U_F^{(i)}:=1+\pi_F^i 
  O_F \subset O_F^\times$ be the $i$th principal 
  congruence subgroups
  of $O_E^\times$ and $O_F^\times$, respectively. Then we have
  $N_{E/F}(U^{(i)}_E)=U^{(i)}_F$.

  Since $R^\times=1+2O_E$ and $\Z_2^\times=1+2\Z_2$, the lemma
  follows.\qed         
\end{proof}

\subsection{}
We define three self-dual skew-Hermitian modules $L_0,L_1, H$ as
follows. 

\begin{itemize}
\item[(i)] The $R$-module $L_0$ is $O_E$ and the pairing $\psi_0$ is
  defined by $\psi_0(1,\omega/2)=1$. Write $\omega'$ for the conjugate
  of $\omega$. One checks that  
\begin{equation}
    \label{eq:62}
   \psi_0(\omega'/2,1)=\psi_0((-2-\omega)/2,1)
    =\psi_0(-\omega/2,1)=\psi_0(1,\omega/2). 
\end{equation}
\item[(ii)] The $R$-module $L_1$ is $R$ and the pairing $\psi_1$ is
  defined by $\psi_1(1,\omega)=1$. One checks that 
  $\psi_1(1,\omega)=\psi_1(\omega',1)$ as (\ref{eq:62}).
\item[(iii)] The $R$-module $H$ is $R\oplus R$ with standard basis
  $e_1=(1,0)$ and $e_2=(0,1)$. The pairing $\psi_H$ is defined by 
  \begin{equation}
    \label{eq:63}
  \begin{split}
\psi_H(e_1,\omega e_1)& =\psi_H(e_1,e_2)=\psi_H(e_2,\omega
    e_2)=\psi_H(\omega e_1,\omega e_2)=0,\\
&  \psi_H(e_1,\omega
    e_2)=\psi_H(e_2, \omega e_1)=1.       
  \end{split}
  \end{equation}
Note that using the relation $\psi_H(\omega x,y)=\psi_H(x, \omega'
y)$, the pairing $\psi_H$ is uniquely determined by any values of 
\[ \psi_H(e_1, \ \omega e_1), \ \psi_H(e_1, e_2), \ \psi_H(e_1,\omega e_2),
\ \text{and} \ \psi_H(e_2,\omega e_2). \]
One checks that  
\[ \psi_H(\omega'
e_1,e_2)=\psi_H((-2-\omega)e_1,e_2)=\psi_H(e_2,\omega
e_1)=1=\psi(e_1,\omega e_2). \] 
\end{itemize}

By Theorem~\ref{41}, there are unique non-negative integers $r$ and
$s$ with $r+s=n$ 
such that \[ M\simeq R^r\oplus O_E^s\] 
as $R$-modules. If one has a
decomposition  
\[ M\simeq L_1^{\oplus r_1}\oplus H^{\oplus r_2} \oplus L_0^{\oplus s'} \]
as skew-Hermitian modules, then $r_1+2r_2=r$ and $s'=s$.

\subsection{}
Let $M_0\subset M$ be the $R$-submodule defined by 
\[ M_0:=\{x\in M\, |\, \omega x\in 2M \, \}. \]
Write $\ol M:=M/2M$ and $\ol M_0:=M_0/2M$. Let 
\[ \ol \psi: \ol M\times \ol M \to \F_2 \]
be the induced non-degenerate alternating pairing. We have a
filtration
\[ \omega \ol M \subset \ol M_0\subset \ol M, \]
and have the following properties
\begin{itemize}
\item $\dim_{\F_2} \omega \ol M=\dim_{\F_2} \ol M/\ol M_0=r$ and
  $\dim_{\F_2} \ol M_0=2s+r$, and
\item $\omega \ol M$ is an isotropic subspace with respect to the
  pairing $\ol \psi$.  
\end{itemize}

For any element $x$ in $M$, write $\ol x$ for the image of $x$ in $\ol
M$.  

\begin{lemma} \label{63} Let $M$ be a self-dual skew-Hermitian module. 
\begin{enumerate}
\item  If there exists an element $x\in M$ such that $\ol \psi(\ol
  x, \omega \ol x)\neq 0$, then the $R$-submodule $M_1$ generated by
  $x$, with the pairing $\psi|_{M_1}$ restricted on $M_1$, is
  isomorphic to $(L_1,\psi_1)$ as skew-Hermitian modules. Moreover, we
  have the decomposition
\[ M=M_1\oplus M_1^{\bot} \]
as skew-Hermitian modules. 
\item If there exists an element $x\in M_0\subset M$ such that $\ol \psi(\ol
  x, \ol {\omega/2  x})\neq 0$, then the $R$-submodule $M_1$ generated by
  $x$ and $\omega/2 x$, with the pairing $\psi|_{M_1}$ restricted on
  $M_1$, is 
  isomorphic to $(L_0,\psi_0)$ as skew-Hermitian modules. Moreover, we
  have the decomposition
\[ M=M_1\oplus M_1^{\bot} \]
as skew-Hermitian modules. 

\end{enumerate}
\end{lemma}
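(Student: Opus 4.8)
The plan is to prove both parts by the same device: in each case the form $\psi$ restricted to $M_1$ has an invertible $\Z_2$-valued Gram matrix, so $M_1$ is a unimodular $R$-sublattice of $(M,\psi)$ and hence splits off as an orthogonal $R$-direct summand; the only non-formal input is Lemma~\ref{62}, used to rescale $\psi|_{M_1}$ to the normal form defining $L_1$ (resp.\ $L_0$).

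\textbf{Part (1).} Put $u:=\psi(x,\omega x)$; the hypothesis $\ol\psi(\ol x,\omega\ol x)\neq 0$ says precisely that $u\in\Z_2^{\times}$. First I would identify $M_1=\langle x\rangle_R$: since $\omega^2=-2\omega-(p+1)$, the $\Z_2$-span $\Z_2 x+\Z_2\omega x$ is already $R$-stable, and $x,\omega x$ are $\Z_2$-independent because applying $\psi(\cdot,\omega x)$ to a relation $ax+b\omega x=0$ gives $au=0$ (here $\psi(\omega x,\omega x)=\psi(x,\omega'\omega x)=(p+1)\psi(x,x)=0$) and then applying $\psi(\cdot,x)$ gives $bu=0$. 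Being cyclic over the domain $R$ of $\Z_2$-rank $2$, $M_1$ is then free of rank one, i.e.\ $M_1\cong R$ as $R$-modules, and in the $\Z_2$-basis $x,\omega x$ the Gram matrix of $\psi|_{M_1}$ is $\left(\begin{smallmatrix}0&u\\-u&0\end{smallmatrix}\right)$, of unit determinant, so $M_1$ is unimodular. Next I would normalize: for $w\in R$ one computes $\psi(wx,\omega wx)=\psi(x,\bar w\,\omega w\,x)=N_{E/\Q_2}(w)\,u$ by commutativity and $\bar w w=N_{E/\Q_2}(w)$, so choosing $w\in R^{\times}$ with $N_{E/\Q_2}(w)=u^{-1}$ (Lemma~\ref{62}) and putting $x':=wx$ — still an $R$-generator of $M_1$ — one gets $\psi(x',\omega x')=1$, whence $1\mapsto x'$ is an $R$-linear isometry $(L_1,\psi_1)\xrightarrow{\sim}(M_1,\psi|_{M_1})$. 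Finally, since $M_1$ is unimodular the map $M\to\Hom_{\Z_2}(M_1,\Z_2)$, $m\mapsto\psi(\cdot,m)|_{M_1}$, is onto (its restriction to $M_1$ already is) with kernel $M_1^{\bot}$, so $M=M_1\oplus M_1^{\bot}$; both summands are $R$-submodules (the involution preserves $R$ and $M_1$), and $\psi$ is the orthogonal sum, giving the asserted decomposition of skew-Hermitian modules.

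\textbf{Part (2).} The argument is the same with $R$ replaced by $O_E$ at the module level. Now $x\in M_0$ means $y:=(\omega/2)x=\alpha x$ lies in $M$, and $M_1=\langle x,y\rangle_R=Rx+R\alpha x=O_E x$ (using $R+R\alpha=O_E$), cyclic over the domain $O_E$ of $\Z_2$-rank $2$, hence $M_1\cong O_E$ as an $O_E$-module and, restricting scalars, as an $R$-module. With $u:=\psi(x,y)\in\Z_2^{\times}$ the Gram matrix in the basis $x,y$ is again $\left(\begin{smallmatrix}0&u\\-u&0\end{smallmatrix}\right)$, using $\psi(y,y)=\tfrac{p+1}{4}\psi(x,x)=0$, so $M_1$ is unimodular. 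Taking $w\in R^{\times}$ with $N_{E/\Q_2}(w)=u^{-1}$ (Lemma~\ref{62}) gives $\psi(wx,\alpha wx)=N_{E/\Q_2}(w)\psi(x,\alpha x)=1$; since the relation $\psi(av,v')=\psi(v,\bar a v')$ passes from $R$ to $O_E$ (because $\omega=2\alpha$ and $\Z_2$ is torsion-free), a rank-one skew-Hermitian $O_E$-form is determined by alternation together with this single value, so $c\mapsto cwx$ is an $O_E$-linear — hence $R$-linear — isometry $(L_0,\psi_0)\xrightarrow{\sim}(M_1,\psi|_{M_1})$. The splitting $M=M_1\oplus M_1^{\bot}$ follows just as in part (1).

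I do not anticipate a genuine obstacle: the two ingredients are Lemma~\ref{62} (to absorb the unit $u$) and the standard fact that a unimodular sublattice of a non-degenerate $\Z_2$-lattice is an orthogonal direct summand. The only point that needs care is the bookkeeping of the coefficient ring — in particular verifying in part (2) that the natural $O_E$-linear identifications are in particular $R$-linear, so that the resulting map really is an isomorphism of skew-Hermitian $R$-modules, and that $M_1^{\bot}$ is $R$-stable in both parts.
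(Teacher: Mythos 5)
Your proof is correct and follows essentially the same route as the paper: rescale the generator by a unit of norm $u^{-1}$ using Lemma~\ref{62} so that $\psi(x,\omega x)=1$ (resp.\ $\psi(x,(\omega/2)x)=1$), then split off the self-dual, $R$-stable sublattice $M_1$. The paper's proof is simply a terser version of this, leaving implicit the Gram-matrix computation, the extension of the adjoint relation to $O_E$, and the $R$-stability of $M_1^{\bot}$, all of which you verify correctly.
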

\begin{proof}
  (1) We have $\psi(x,\omega x)=a\in \Z^{\times}_2$. By
      Lemma~\ref{62}, there is an element $\lambda\in R^\times$ such
      that $N(\lambda)=a^{-1}$. Replacing $x$ by $\lambda x$, we get
      $\psi(x,\omega x)=1$. Since $M_1$ is self-dual and
      $R$-invariant, the orthogonal complement $M_1^{\bot}$ is $R$-invariant
      and we have $M=M_1\oplus M_1^{\bot}$. This proves (1).

  (2) Using the same argument as (1), we can choose an element $x\in
      M_1$ such that $\psi(x,\omega/2 x)=1$ and we have the
      decomposition $M=M_1\oplus M_1^{\bot}$. It is clear that we have
      an isometry $(M_1,\psi|_{M_1})\simeq (L_0,\psi_0)$. This proves
      (2). \qed 
\end{proof}

\begin{lemma}\label{64}
  There is a decomposition as skew-Hermitian modules
  \begin{equation}
    \label{eq:64}
    M\simeq M_1\oplus L_0^{\oplus s}
  \end{equation}
where $M_1$ is a free $R$-submodule of rank $r$ which is self-dual
with respect to the pairing $\psi$. 
\end{lemma}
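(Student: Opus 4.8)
The plan is to argue by induction on $s$, peeling off one copy of $L_0$ at a time by means of Lemma~\ref{63}(2), in the spirit of the proof of Lemma~\ref{51}(2). The base case $s=0$ is immediate: by Theorem~\ref{41} the module $M$ is then free of rank $r$, and as $M$ is self-dual by hypothesis we may take $M_1=M$. For the inductive step we assume $s>0$, and the whole point is to produce an element $x\in M_0$ with $\bar\psi(\bar x,\overline{\omega x/2})\neq 0$, so that Lemma~\ref{63}(2) applies.

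To find such an $x$, I would argue by contradiction. Suppose $\bar\psi(\bar x,\overline{\omega x/2})=0$ for every $x\in M_0$. (Here one first checks that $\overline{\omega x/2}$ is well defined in $\bar M/\omega\bar M$ and that $\bar\psi(\bar x,-)$ annihilates $\omega\bar M$ when $\bar x\in\bar M_0$, which follows from $\omega'\equiv\omega\pmod 2$ and the resulting identity $\bar M_0=(\omega\bar M)^{\bot}$ together with a dimension count.) Starting from $\psi(\omega x,x')=\psi(x,\omega'x')$ and $\omega'=-2-\omega$, and dividing by $2$ (legitimate since $\omega x,\omega x'\in 2M$ for $x,x'\in M_0$), one obtains
\[ \psi(x,\omega x'/2)=-\psi(x,x')-\psi(\omega x/2,x'), \]
which reduces modulo $2$ to $\bar\psi(\bar x,\overline{\omega x'/2})=\bar\psi(\bar x,\bar x')+\bar\psi(\bar x',\overline{\omega x/2})$. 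Now apply the vanishing hypothesis to $x$, $x'$, and $x+x'$: expanding $\bar\psi(\overline{x+x'},\overline{\omega(x+x')/2})=0$, the diagonal terms vanish and one is left with $\bar\psi(\bar x,\overline{\omega x'/2})+\bar\psi(\bar x',\overline{\omega x/2})=0$; substituting the displayed congruence forces $\bar\psi(\bar x,\bar x')=0$ for all $x,x'\in M_0$. Thus $\bar M_0$ would be an isotropic subspace of the non-degenerate symplectic space $(\bar M,\bar\psi)$, so $\dim_{\F_2}\bar M_0\le\tfrac{1}{2}\dim_{\F_2}\bar M=r+s$; but $\dim_{\F_2}\bar M_0=r+2s$, giving $s\le 0$, a contradiction.

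Given such an $x$, Lemma~\ref{63}(2) supplies an orthogonal decomposition $M=M'\oplus(M')^{\bot}$ of skew-Hermitian modules with $M'\simeq L_0$. Since $M'$ and $M$ are self-dual, so is $(M')^{\bot}$; and as an $R$-module $(M')^{\bot}$ satisfies $O_E\oplus(M')^{\bot}\simeq R^r\oplus O_E^s$, so by the uniqueness statement in Theorem~\ref{41} (Krull--Schmidt for these $R$-modules) we get $(M')^{\bot}\simeq R^r\oplus O_E^{s-1}$. The induction hypothesis, applied to $(M')^{\bot}$, then yields $(M')^{\bot}\simeq M_1\oplus L_0^{\oplus(s-1)}$ with $M_1$ a self-dual free $R$-module of rank $r$, and combining with $M'\simeq L_0$ gives $M\simeq M_1\oplus L_0^{\oplus s}$.

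The main obstacle is the existence step of the second paragraph: showing that once $s>0$ some vector of $M_0$ must pair non-trivially with its own $\omega/2$-multiple. The divided-by-$2$ avatar of the adjointness relation for $\omega$ is exactly what drives the contradiction, and I expect the bookkeeping around well-definedness modulo $\omega\bar M$, and the identification $\bar M_0=(\omega\bar M)^{\bot}$, to be the parts that require the most care.
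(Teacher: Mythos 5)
Your proof is correct, but it follows a genuinely different route from the paper's. The paper does not induct: it splits $\ol M=\omega\ol M\oplus W_1\oplus W_2$ with $W_2$ a complement of $\omega\ol M$ in $\ol M_0$ on which $\ol\psi$ is non-degenerate, lifts $W_2$ to an $O_E$-submodule $M_2\subset M_0$ of rank $s$, observes via Lemma~\ref{61} that $M_2$ is a unimodular Hermitian $O_E$-lattice, and then invokes the classification of unimodular Hermitian lattices over the unramified maximal order (Theorem~\ref{27}) to get $M_2\simeq L_0^{\oplus s}$ in one step, finally setting $M_1:=M_2^{\bot}$. You instead peel off one copy of $L_0$ at a time via Lemma~\ref{63}(2), and the real content of your argument is the existence step: the characteristic-$2$ polarization of $x\mapsto\ol\psi(\ol x,\ol{\omega x/2})$ combined with the adjointness relation $\psi(\omega x,x')=\psi(x,(-2-\omega)x')$ shows that if this quantity vanished identically on $M_0$ then $\ol M_0$ would be isotropic of dimension $r+2s>n$, a contradiction for $s>0$; I checked this computation and it is sound, as is the appeal to the uniqueness statement of Theorem~\ref{41} to identify $(M')^{\bot}\simeq R^r\oplus O_E^{s-1}$. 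The paper's argument is shorter given the Jacobowitz machinery of Section~\ref{sec:02}; yours is more self-contained (it never uses the classification of unimodular $O_E$-lattices), actually puts Lemma~\ref{63}(2) to work, and runs in close parallel with the proof of Lemma~\ref{51}(2). Both are valid proofs of the lemma.
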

\begin{proof}
  It is clear that $\ol \psi(\omega \ol M, \ol M_0)=0$. It follows from
the non-degeneracy of $\ol \psi$ 
that there are vectors $x_1,\dots, x_r\in \omega \ol M$ and $y_1, \dots
, y_r\in \ol M$ such that 
\[ \ol \psi(x_i,x_j)=\ol \psi(y_i,y_j)=0,\ 
\text{ and }\  \ol \psi(x_i, y_j)=\delta_{i,j} \] 
for all $i,j=1,\dots, r$. Put
$W_1:=<y_1,\dots, y_r>_{\F_2}$ and $W_2:=(\omega \ol M\oplus
W_1)^{\bot}$. We have $\ol M=\omega \ol M\oplus W_1\oplus W_2$.
It follows from 
\[ \ol \psi(\omega W_2,\ol M)=\ol \psi (W_2, (-2-\omega) \ol M)=0 \]
that $\omega W_2=0$, and hence $W_2 \subset \ol M_0$. It follows from the
dimension count that $\ol M_0=\omega \ol M\oplus W_2$.
This shows that the pairing $\ol \psi$ is non-degenerate on $\ol
M_0/\omega \ol M$. We can choose elements $z_1,\dots, z_s$ in $M_0$
such that the $O_E$-submodule $M_2=<z_1,\dots, z_s>_{O_E}\subset M_0$
is mapped onto $W_2$. The module $M_2$ is self-dual with respect to
$\psi$. By Lemma~\ref{61}, 
$M_2$ is a unimodular Hermitian module over $O_E$ for
the pairing $(\, , \,)$, and hence $M_2$ is a direct sum of $O_E$-rank
one Hermitian submodule, as $E/\Q_2$ is unramified. 
Therefore, $M_2\simeq L_0^{\oplus s}$ as skew-Hermitian modules. 
Put $M_1:=M_2^\bot$. We have $M=M_1\oplus M_2$ as skew-Hermitian
modules, and $M_1$ is a free $R$-submodule of rank $r$ which is
self-dual with respect to the pairing $\psi$.  \qed  
\end{proof}

\subsection{}
\label{}
Now we classify
self-dual skew-Hermitian modules $(M,\psi)$ in the case where $M\simeq
R^r$, where $r$ is a positive integer. Let $N$ be the
smallest $O_E$-module in $V$ containing $M$, and let $N'$ be the
largest $O_E$-submodule in $M$. We
have $N'=2N\supset 2M$. Since $N'/2M$ is a maximal isotropic
$\F_2$-subspace of dimension $r$
with respect to $\ol \psi$, the polarization type 
of $\psi$ on $N'$ is $(2,\dots, 2)$. Put
$\psi_N:=2\psi$. We have $\psi_N(N,N)\subset \Z_2$, and $N$ is
self-dual with respect to the pairing $\psi_N$. 
Note that $(N,\psi_N)\simeq (L_0,\psi_0)^{\oplus r}$ by Lemma~\ref{64}. 
Put $\ol N=N/2N$ and let $\ol \psi_N:\ol N\times \ol N\to
\F_2$ be the induced non-degenerate pairing. We have 
\begin{equation}
  \label{eq:65}
  2N\subset M\subset N,\quad \dim_{\F_2} \ol M = r
\end{equation}
and $\ol M$ is isotropic for $\ol \psi_N$, where $\ol M:=M/2N$. Note
that $M$ generates $N$ over $O_E$, or equivalently, the subspace $\ol
M$ generates $\ol N$ over $\ol O_E:=O_E/2O_E=\F_4$. 

Suppose $M_1$ is another self-dual skew-Hermitian module such
that $M_1\simeq R^r$. Define $N_1$ and $\psi_{N_1}$ similarly. As
$(N_1,\psi_{N_1})\simeq (N,\psi_N)$, we choose 
an isomorphism $\alpha: (N_1,\psi_{N_1})\simeq (N,\psi_N)$ of
skew-Hermitian modules. The image $M'=\alpha(M_1)$ is an $R$-module
which satisfies the same property as $M$ does (\ref{eq:65}).

Now we fix the self-dual skew-Hermitian module $(N,\psi_N)$. 
Similarly to \S~\ref{sec:54}, consider
the set $X_r$ of $R$-submodules $M$ of $N$
such that 
\begin{itemize}
\item $2N\subset M\subset N$ and $\dim_{\F_2} \ol M = r$, 
\item $\ol M$ is isotropic with respect to the pairing $\ol \psi_N$,
  and
\item $M$ generates $N$ over $O_E$.
\end{itemize}
We need to determine the isomorphism classes of elements $M$ 
in $X_r$. Let $\ol X_r$ be the set of maximally isotropic
$\F_2$-subspaces $\ol M$ of $\ol N$ such that 
$\ol M$ generates $\ol N$ over $\F_4$. 
Since the $R$-module structure of $\ol N$ is simply an
$\F_2$-module, the reduction map $M\mapsto \ol M$ gives rise
to a bijection $X_r\simeq \ol X_r$. 

Using the same argument as in \S~\ref{sec:54}, the set of isomorphism
classes of elements in $X_r$ is in bijection with
$\Aut_{O_E}(N,\psi_N)\backslash X_r$, and the reduction map induces
the bijection 
\begin{equation}
  \label{eq:66}
  \Aut_{O_E}(N,\psi_N)\backslash X_r\simeq 
  \Aut_{\F_4}(\ol N,\ol \psi_N)\backslash \ol X_r
\end{equation}




\begin{prop}\label{65}
  Let $r$ be a positive integer.
  There are 
\[ \# \Aut_{\F_4}(\ol N,\ol \psi_N)\backslash \ol X_r \]
non-isomorphic self-dual skew-Hermitian modules $M$ such that $M\simeq
R^r$ as $R$-modules.
\end{prop}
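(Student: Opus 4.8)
The plan is to follow the argument of \S~\ref{sec:54} essentially verbatim, so the proof is short. First I would recall from Lemma~\ref{64} that if $(M,\psi)$ is a self-dual skew-Hermitian module with $M\simeq R^r$ as an $R$-module, and $N$ denotes the smallest $O_E$-submodule of $V=M\otimes_{\Z_2}\Q_2$ containing $M$, then $\psi_N:=2\psi$ makes $N$ a self-dual skew-Hermitian $O_E$-module with $(N,\psi_N)\simeq (L_0,\psi_0)^{\oplus r}$. In particular, for any two such modules $M_1,M_2$ the associated $(N_1,\psi_{N_1})$ and $(N_2,\psi_{N_2})$ are isomorphic; after transporting structure along a fixed isomorphism onto a fixed model $(N,\psi_N)$ we may regard $M_1$ and $M_2$ as elements of the set $X_r$ attached to $(N,\psi_N)$, as in (\ref{eq:65}). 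Thus every isomorphism class of self-dual skew-Hermitian modules $M$ with $M\simeq R^r$ is represented by an element of $X_r$.

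Second, I would verify that $M_1,M_2\in X_r$ give isomorphic skew-Hermitian modules if and only if they lie in one orbit of $\Aut_{O_E}(N,\psi_N)=\Aut_R(N,\psi_N)$ on $X_r$. The ``if'' direction is immediate. For ``only if'', any skew-Hermitian isomorphism $f:M_1\isoto M_2$ induces a $\Q_2$-linear self-isometry of $V$ commuting with the $R$-action; since the $R$-action on $V$ extends uniquely to an $E$-action and $N$ is the $O_E$-saturation of each $M_i$ inside $V$, the map $f$ carries $N$ onto $N$, preserves $\psi$, and hence lies in $\Aut_{O_E}(N,\psi_N)$ with $f(M_1)=M_2$. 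Combining the two steps, the set of isomorphism classes in question is in bijection with $\Aut_{O_E}(N,\psi_N)\backslash X_r$.

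Finally, I would invoke the bijection (\ref{eq:66}), $\Aut_{O_E}(N,\psi_N)\backslash X_r\simeq \Aut_{\F_4}(\ol N,\ol\psi_N)\backslash\ol X_r$, established just above the proposition through the reduction map $M\mapsto\ol M=M/2N$ (using that the $R$-structure on $\ol N$ is merely its $\F_2$-structure, so the $O_E$-action on $N$ descends to the displayed action modulo $2$, and that $X_r\simeq\ol X_r$). This yields the stated count.

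The only point requiring genuine care is the lifting assertion in the second step: that an abstract isomorphism of skew-Hermitian $R$-modules between two $O_E$-generating sublattices of $(N,\psi_N)$ extends to an element of $\Aut_{O_E}(N,\psi_N)$. This is exactly where one uses the functoriality of the passage from $M$ to its $O_E$-saturation $N$ in $V$, together with the uniqueness of the extension of the $R$-action to an $O_E$-action; the remaining arguments are the formal bookkeeping already carried out in \S~\ref{sec:54}.
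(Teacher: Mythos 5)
Your proposal is correct and follows essentially the same route as the paper, which establishes Proposition~\ref{65} through the discussion immediately preceding it (the setup of $X_r$, the identification of isomorphism classes with $\Aut_{O_E}(N,\psi_N)$-orbits via the saturation argument borrowed from \S~\ref{sec:54}, and the reduction bijection (\ref{eq:66})). Your explicit justification of the lifting step --- that an isometry $M_1\isoto M_2$ extends to $V$, commutes with the $E$-action, and hence carries the common $O_E$-saturation $N$ to itself --- is exactly the point the paper leaves implicit by citing \S~\ref{sec:54}.
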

 
We now describe the set $\Aut_{\ol O_E}(\ol N,\ol \psi_N)\backslash
\ol X_r$. Let $V_0=\F_4^r$, viewed as the space of column vectors,  
together with the standard non-degenerate
Hermitian form $(\, , )_0$ defined by 
\[ ((x_i), (y_i))_0=\sum_{i=1}^r x_i \bar y_i, \]
where $y\mapsto \bar y$ is the non-trivial automorphism on
$\F_4$. Choose an element $\epsilon\in F_4^\times$ such that
$\epsilon+\bar \epsilon=0$ (in fact $\epsilon=1$) and put 
$\<x,y\>_0:=\Tr_{\F_4/\F_2} \epsilon(x,y)_0$, for $x,y\in V_0$. 
Then we have an isomorphism $(V_0,\<\,,\,\>_0)\simeq (\ol N,\ol
\psi_N)$ as skew-Hermitian modules over $\F_4$ and we may 
assume that $(\ol N,\ol \psi_N)=(V_0,\<\,,\,\>_0)$. Let 
\[ U(r)(\F_2)=\Aut_{\F_4}(V_0,(\,,\,)_0)=\Aut_{\F_4}(V_0,\<\,,\,\>_0)
\]
be the unitary group associated to $(V_0,(\,,\,)_0)$. 
Write any matrix $U$ in $\GL_r(\F_4)$ as $(u_1,\dots, u_r)$, where
$u_i\in V_0$. The map 
\[ U=(u_1,\dots, u_r)\mapsto W=<u_1,\dots,u_r>_{\F_2} \]
induces a bijection between $\GL_r(\F_4)/\GL_r(\F_2)$ and the set of
$r$-dimensional $\F_2$-subspace $W\subset V_0$ which spans $V_0$ over
$\F_4$. Note that $\Tr_{\F_4/\F_2}(x,y)_0=0$ if and only if
$(x,y)_0\in \F_2$. It follows that $W$ is an isotropic subspace with
respect to $\<\,,\,\>_0$ if and only if $\bar U^t U\in \GL_r(\F_2)$
for any  matrix $U$ mapping to $W$. Let $Y_r\subset \GL_r(\F_4)$ be
the subset consisting of matrices $U$ such that $\bar U^t U\in
\GL_r(\F_2)$. The action of the group $U(r)(\F_2)$ on the set
$\GL_r(\F_4)/\GL_r(\F_2)$, by transport of the structure, is simply the
left translation. Thus, there is a bijection
\begin{equation}
  \label{eq:67}
  \Aut_{\ol O_E}(\ol N,\ol \psi_N)\backslash
\ol X_r\simeq U(r)(\F_2)\backslash  Y_r /\GL_r(\F_2).
\end{equation}

\begin{lemma}\label{66}
  The map $\pi: Y_r \to \GL_r(\F_2)$, $U\mapsto \bar U^t U$, induces
  the bijection
\[ U(r)(\F_2)\backslash  Y_r /\GL_r(\F_2)\simeq S_r/\!\sim. \]
\end{lemma}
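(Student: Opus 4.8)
The plan is to analyze the double coset space $U(r)(\F_2)\backslash Y_r/\GL_r(\F_2)$ directly via the map $\pi\colon U\mapsto \bar U^t U$, showing it is well-defined on double cosets, surjective onto $S_r$, and that its fibers are exactly the $\GL_r(\F_2)$-congruence classes. First I would check that $\pi$ indeed lands in $S_r$: for $U\in Y_r$ we have $\bar U^t U\in \GL_r(\F_2)$ by definition of $Y_r$, and moreover $\overline{(\bar U^t U)}^t=\bar U^t U$; but since the entries already lie in $\F_2$, the bar is trivial, so $(\bar U^t U)^t=\bar U^t U$, i.e.\ $\pi(U)$ is symmetric and invertible. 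Next I would verify compatibility with the two group actions. For $g\in\GL_r(\F_2)$, $\pi(Ug)=\bar g^t\bar U^t U g=g^t\pi(U)g$ (using $\bar g=g$), which is exactly the congruence action defining $\sim$ on $S_r$; and for $h\in U(r)(\F_2)$, $\pi(hU)=\bar U^t\bar h^t h\,U=\bar U^t U=\pi(U)$ since $\bar h^t h=I_r$ by definition of the unitary group. Hence $\pi$ descends to a well-defined map $U(r)(\F_2)\backslash Y_r/\GL_r(\F_2)\to S_r/\!\sim$.

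For surjectivity, given any $A\in S_r$ I need $U\in\GL_r(\F_4)$ with $\bar U^t U=A$; this is a Hermitian-diagonalization statement over the finite field $\F_4$. The key point is that the Hermitian form $(x,y)\mapsto \bar x^t A y$ on $\F_4^r$ is non-degenerate, and every non-degenerate Hermitian form over a finite field is equivalent to the standard one (here represented by $(x,y)_0$, i.e.\ by $I_r$), so there exists $U\in\GL_r(\F_4)$ with $\bar U^t I_r U=\bar U^t U=A$. That places $A$ in the image of $\pi$.

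For injectivity on double cosets, suppose $\pi(U_1)\sim\pi(U_2)$, say $\bar U_2^t U_2=g^t(\bar U_1^t U_1)g$ for some $g\in\GL_r(\F_2)$. Replacing $U_1$ by $U_1 g$ (which changes its image only within the same $\GL_r(\F_2)$-coset) I may assume $\bar U_1^t U_1=\bar U_2^t U_2=:A$. Then $h:=U_2 U_1^{-1}$ satisfies $\bar h^t h=\bar U_1^{-t}(\bar U_2^t U_2)U_1^{-1}=\bar U_1^{-t}A\,U_1^{-1}=\bar U_1^{-t}(\bar U_1^t U_1)U_1^{-1}=I_r$, so $h\in U(r)(\F_2)$ and $U_2=hU_1$, meaning $U_1$ and $U_2$ lie in the same $U(r)(\F_2)$-orbit, hence the same double coset. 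Combining the three steps, $\pi$ induces a bijection $U(r)(\F_2)\backslash Y_r/\GL_r(\F_2)\simeq S_r/\!\sim$, as claimed.

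I expect the main obstacle to be the surjectivity step, which rests on the classification of non-degenerate Hermitian forms over $\F_4$ up to equivalence — specifically that there is a single equivalence class in each rank, so that every symmetric invertible $\F_2$-matrix arises as a Gram matrix $\bar U^t U$. This is standard (Hermitian forms over finite fields are classified by rank alone, since the norm map $\F_4^\times\to\F_2^\times$ is surjective), but it is the one input not purely formal; the well-definedness and injectivity steps are routine matrix algebra exploiting that bar is trivial on $\F_2$-entries and that $\bar h^t h=I_r$ characterizes $U(r)(\F_2)$.
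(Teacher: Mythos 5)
Your proof is correct, and its skeleton is the same as the paper's: check $\pi$ lands in $S_r$, check equivariance ($\pi(Ug)=g^t\pi(U)g$ and $\pi(hU)=\pi(U)$), observe that the fibers of $\pi$ are $U(r)(\F_2)$-orbits (the paper phrases this as each non-empty fiber being a principal homogeneous space of $U(r)(\F_2)$, which is exactly your $h=U_2U_1^{-1}$ computation), and then prove surjectivity. The one place you diverge is the surjectivity step: you invoke the general classification of non-degenerate Hermitian forms over finite fields (one class per rank, via surjectivity of the norm $\F_4^\times\to\F_2^\times$), whereas the paper stays self-contained by using its Lemma~\ref{57} to reduce any $A\in S_r$ up to $\sim$ to a block-diagonal matrix with blocks $(1)$ and $\left(\begin{smallmatrix}0&1\\1&0\end{smallmatrix}\right)$, and then exhibiting an explicit $U\in\GL_2(\F_4)$ with $\bar U^tU=\left(\begin{smallmatrix}0&1\\1&0\end{smallmatrix}\right)$. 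Both are valid; your citation is the cleaner high-level argument (and actually produces every $A$ on the nose, not just up to $\sim$), while the paper's version avoids any external input beyond a lemma it has already proved.
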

\begin{proof}
  Since the matrix $\bar U^t U$ is Hermitian, one has
  $\pi(Y_r)\subset S_r$. Each non-empty fiber of $\pi$ is a principal
  homogeneous space of $U(r)(\F_2)$. Thus, the induced map
  $\pi:U(r)(\F_2) \backslash Y_r\to S_r$ is injective. One easily checks 
  $\pi(U P)=P^t \pi(U) P$ for $P\in \GL_r(\F_2)$. Thus, we have
  the injection map 
\[  U(r)(\F_2)\backslash  Y_r /\GL_r(\F_2)\to S_r/\!\sim. \]
It suffices to show that the map $\pi:Y_r \to S_r/\!\sim$ is
surjective. By Lemma~\ref{57}, any matrix $A$ in $S_r$ is equivalent
to a matrix with diagonal boxes either $(1)$ or $\wt I_2=
\begin{pmatrix}
  0 & 1 \\ 1 & 0 \\
\end{pmatrix}$. Thus it suffices to find a matrix $U\in \GL_2(\F_4)$
such that $\bar U^t U =\wt I_2$. Note that $\F_4=\F_2[\alpha]$ with
$\alpha^2+ \alpha+1=0$. Take 
\[ U=
\begin{pmatrix}
  \alpha & \alpha+1 \\ 1 & \alpha+1
\end{pmatrix} \]
and get $\bar U^t U=\wt I_2$. This proves the lemma. \qed
\end{proof}

\begin{prop}[Witt Cancellation]\label{67}
  Let $M_1$ and $M_2$ be two self-dual skew-Hermitian modules. Suppose
  there is an isomorphism
\[ M_1\oplus L^{\oplus s}\simeq M_2 \oplus L^{\oplus s} \]
of skew-Hermitian modules for some integer $s\ge 0$. Then $M_1$ is
isometric to $M_2$. 
\end{prop}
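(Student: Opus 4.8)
The plan is to imitate the proof of Proposition~\ref{54}, with Lemma~\ref{64} in the role of Lemma~\ref{51}(2) and $L_0$ in the role of $L$. First I would use Lemma~\ref{64} to write $M_i\simeq M_i'\oplus L_0^{\oplus s_i}$ with $M_i'$ free over $R$, say of rank $r_i$. Since $L_0\simeq O_E$ as an $R$-module and $M_1\oplus L_0^{\oplus s}\simeq M_2\oplus L_0^{\oplus s}$ as $R$-modules, Theorem~\ref{41} forces $r_1=r_2=:r$ and $s_1=s_2$; replacing $M_i$ by $M_i'$ and $s$ by $s+s_i$ we may assume $M_1\simeq M_2\simeq R^r$ as $R$-modules, and it suffices to prove these are isometric.

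Next I would set up the mod-$2$ reductions as in \S~\ref{sec:54}. Let $N_i$ be the smallest $O_E$-lattice in $EM_i$ containing $M_i$; then $N_i\oplus L_0^{\oplus s}$ is the smallest $O_E$-lattice containing $M_i\oplus L_0^{\oplus s}$, and since $2N_i$ is the largest $O_E$-submodule of $M_i$, the module $2N_i\oplus L_0^{\oplus s}$ is the largest $O_E$-submodule of $M_i\oplus L_0^{\oplus s}$. Writing $\psi_i$ for the form on $M_i\oplus L_0^{\oplus s}$ and $\psi_i':=2\psi_i$, the form $\psi_i'$ is $\Z_2$-valued on $N_i\oplus L_0^{\oplus s}$ and induces a non-degenerate alternating $\F_2$-form $\ol{\psi'_i}$ on
\[ \ol N_i:=(N_i\oplus L_0^{\oplus s})/(2N_i\oplus L_0^{\oplus s})\simeq N_i/2N_i, \]
which is moreover a skew-Hermitian $\F_4$-module in the sense of the discussion preceding Proposition~\ref{65}; the subspace $\ol M_i:=(M_i\oplus L_0^{\oplus s})/(2N_i\oplus L_0^{\oplus s})\simeq M_i/2N_i$ is maximal isotropic for $\ol{\psi'_i}$ and spans $\ol N_i$ over $\F_4=O_E/2O_E$.

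Given an isometry $\alpha\colon M_1\oplus L_0^{\oplus s}\to M_2\oplus L_0^{\oplus s}$, I would extend it $E$-linearly to the ambient spaces; it then carries $N_1\oplus L_0^{\oplus s}$ onto $N_2\oplus L_0^{\oplus s}$ and the respective largest $O_E$-submodules onto each other, hence reduces to an $\F_4$-linear isometry $\ol\alpha\colon(\ol N_1,\ol{\psi'_1})\to(\ol N_2,\ol{\psi'_2})$ with $\ol\alpha(\ol M_1)=\ol M_2$. By Lemma~\ref{64} each $(N_i,\psi_i'|_{N_i})$ is isometric to $(L_0,\psi_0)^{\oplus r}$, so both $\ol N_i$ are isometric to the fixed standard model $(\ol N,\ol\psi_N)$; transporting $\ol M_1,\ol M_2$ to $\ol N$ and composing with $\ol\alpha$ shows they determine the same $\Aut_{\F_4}(\ol N,\ol\psi_N)$-orbit in $\ol X_r$. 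By Proposition~\ref{65} and Lemma~\ref{66} this orbit --- equivalently a class in $S_r/\!\sim$ --- is a complete isomorphism invariant of self-dual skew-Hermitian $R$-modules that are free of rank $r$, and therefore $M_1\simeq M_2$.

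The hard part will be the last passage, from ``$\ol\alpha$ matches $\ol M_1$ with $\ol M_2$'' to ``$M_1\simeq M_2$''. The route above invokes the classification; a more self-contained alternative, parallel to the proof of Proposition~\ref{54}, is to lift $\ol\alpha$ to an $O_E$-linear isometry $\gamma\colon N_1\to N_2$: choose $O_E$-bases of $N_1,N_2$ orthonormal for the Hermitian forms underlying $\psi_1',\psi_2'$ (available by Lemma~\ref{64}), so that $\ol\alpha$ becomes a matrix in the finite unitary group $U(r)(\F_4)$, and lift it to $U(r)(O_E)$, either by smoothness of the unitary $\Z_2$-group scheme in the unramified case or by successive approximation, using that $D\mapsto D+\bar D^t$ is surjective onto the Hermitian matrices over $\F_4$. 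The resulting $\gamma$ satisfies $\gamma(2N_1)=2N_2$ and reduces to $\ol\alpha$, hence $\gamma(M_1)=M_2$. One must keep track throughout that the $\F_4$-structure on $\ol N_i$ is the one induced from $O_E/2O_E$ and that $\ol\alpha$ is $\F_4$-linear precisely because it descends from an $O_E$-linear map --- this is what places the reduction in the unitary, rather than merely symplectic, setting, and is essentially the only place the present case differs from that of Proposition~\ref{54}.
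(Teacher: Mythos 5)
Your proposal is correct and follows essentially the same route as the paper: the paper's own proof simply says ``the proof is the same as that of Proposition~\ref{54}'' and then isolates exactly the step you flag as the hard part, namely lifting the induced isometry $\ol\alpha\colon(\ol N_1,\ol\psi_1')\to(\ol N_2,\ol\psi_2')$ to an $O_E$-linear isometry of the lattices, which it justifies (as you do) by smoothness of the relevant Isom scheme over $\Z_2$. Your elementary alternative via successive approximation using the surjectivity of $D\mapsto D+\bar D^t$ is just an explicit unwinding of that same smoothness argument.
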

\begin{proof}
  The proof is the same as that of Proposition~\ref{54}. Note that
  one can lift the isomorphism $\ol \beta:(\ol N_1, \ol \psi_1)\simeq
  (\ol N_2,\ol \psi_2)$ to an isomorphism $\gamma:(N_1,\psi_1)\simeq
  (N_2,\psi_2)$. This is because
  $\Isom_{R}((N_1,\psi_1),
  (N_2,\psi_2))$ is the set of $\Z_2$-points of a smooth scheme over
  $\Z_2$ (in fact a trivial torsor of a smooth group scheme over 
  $\Z_2$). \qed 
\end{proof}

\begin{cor}\label{68}
  Let $M$ be a self-dual skew-Hermitian module of $\Z_2$-rank
  $2n$. Then there are unique non-integers $r$ and $s$ with $r+s=n$
  and a self-duel skew-Hermitian module $M_1$ which is free of rank
  $r$ such that
\[ M\simeq M_1\oplus L^{\oplus s}. \]
Moreover, $M_1$ is uniquely determined, up to isomorphism,  by $M$.
\end{cor}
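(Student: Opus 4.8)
The plan is to deduce this formally from two results already established in this section: the explicit orthogonal splitting of Lemma~\ref{64} and the Witt cancellation of Proposition~\ref{67}. (Throughout, $L_0$ denotes the rank-one self-dual skew-Hermitian module with underlying $R$-module $O_E$ introduced above, which is the module written $L$ in the statement.)

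\textbf{Existence and the integers $r,s$.} First I would invoke Lemma~\ref{64}: it provides an isomorphism $M\simeq M_1\oplus L_0^{\oplus s}$ of skew-Hermitian modules in which $M_1$ is a free $R$-module of rank $r$ that is self-dual for $\psi$. Since $M_1$ has $\Z_2$-rank $2r$ and each copy of $L_0$ has $\Z_2$-rank $2$, comparing $\Z_2$-ranks with that of $M$ forces $r+s=n$. To see that $r$ and $s$ are determined by $M$, forget the pairing: then $M\simeq R^r\oplus O_E^s$ as $R$-modules (since $M_1\simeq R^r$ and $L_0\simeq O_E$ as $R$-modules), so Theorem~\ref{41}(1) shows that the pair $(r,s)$ is an invariant of the $R$-module underlying $M$.

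\textbf{Uniqueness of $M_1$.} Suppose $M\simeq M_1\oplus L_0^{\oplus s}\simeq M_1'\oplus L_0^{\oplus s'}$ are two decompositions of the stated form, with $M_1,M_1'$ free and self-dual. By the previous step the multiplicities agree, $s=s'$, since each equals the $R$-module invariant of $M$. Hence $M_1\oplus L_0^{\oplus s}\simeq M_1'\oplus L_0^{\oplus s}$ as skew-Hermitian modules, and Proposition~\ref{67} (applied with its module $L$ taken to be $L_0$) yields $M_1\simeq M_1'$, which is the asserted uniqueness up to isomorphism.

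\textbf{Main obstacle.} There is in fact no serious obstacle inside this corollary: the substance has already been spent in Lemma~\ref{64} (the splitting, which exploits the unramifiedness of $E/\Q_2$ so that unimodular Hermitian $O_E$-lattices are orthogonal sums of rank-one pieces) and in Proposition~\ref{67} (cancellation, resting on the smoothness of the relevant isometry torsor over $\Z_2$). The only point requiring a little care is the order of operations in the uniqueness argument: one must first pin down, via Theorem~\ref{41}, that the multiplicity $s$ of $L_0$ is common to any two decompositions before one is entitled to cancel; once that is in place, the cancellation step applies verbatim and completes the proof.
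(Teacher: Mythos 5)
Your proposal is correct and matches the paper's own (very terse) proof, which simply cites Lemma~\ref{64} for the decomposition and Proposition~\ref{67} for the cancellation; your additional observation that the multiplicity $s$ is pinned down by the underlying $R$-module structure via Theorem~\ref{41} is exactly the implicit step needed before cancellation can be applied, and your reading of $L$ as $L_0$ is the intended one. No changes needed.
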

\begin{proof}
  This follows immediately from Lemma~\ref{64} and
  Proposition~\ref{67}. \qed
\end{proof}

Theorem~\ref{12} follows from
Corollary~\ref{68}, Proposition~\ref{65}, (\ref{eq:67}), and
Lemma~\ref{66}.


\begin{thebibliography}{10}

\def\jams{{\it J. Amer. Math. Soc.}} 
\def\invent{{\it Invent. Math.}} 
\def\ann{{\it Ann. Math.}} 
\def\ihes{{\it Inst. Hautes \'Etudes Sci. Publ. Math.}} 

\def\ecole{{\it Ann. Sci. \'Ecole Norm. Sup.}}
\def\ecole4{{\it Ann. Sci. \'Ecole Norm. Sup. (4)}}
\def\mathann{{\it Math. Ann.}} 
\def\duke{{\it Duke Math. J.}} 
\def\jag{{\it J. Algebraic Geom.}} 
\def\advmath{{\it Adv. Math.}}
\def\compos{{\it Compositio Math.}} 
\def\ajm{{\it Amer. J. Math.}}
\def\grenoble{{\it Ann. Inst. Fourier (Grenoble)}}
\def\crelle{{\it J. Reine Angew. Math.}}
\def\mrt{{\it Math. Res. Lett.}}
\def\imrn{{\it Int. Math. Res. Not.}}
\def\acad{{\it Proc. Nat. Acad. Sci. USA}}
\def\tams{{\it Trans. Amer. Math. Sci.}}
\def\cras{{\it C. R. Acad. Sci. Paris S\'er. I Math.}} 
\def\mathz{{\it Math. Z.}} 
\def\cmh{{\it Comment. Math. Helv.}}
\def\docmath{{\it Doc. Math. }}
\def\asian{{\it Asian J. Math.}}
\def\jussieu{{\it J. Inst. Math. Jussieu}}

\def\manmath{{\it Manuscripta Math.}} 
\def\jnt{{\it J. Number Theory}} 
\def\ijm{{\it Israel J. Math.}}
\def\ja{{\it J. Algebra}} 
\def\pams{{\it Proc. Amer. Math. Sci.}}
\def\smfmemoir{{\it Bull. Soc. Math. France, Memoire}}
\def\bsmf{{\it Bull. Soc. Math. France}}
\def\sb{{\it S\'em. Bourbaki Exp.}}
\def\jpaa{{\it J. Pure Appl. Algebra}}
\def\jems{{\it J. Eur. Math. Soc. (JEMS)}}
\def\jtokyo{{\it J. Fac. Sci. Univ. Tokyo}}
\def\cjm{{\it Canad. J. Math.}}
\def\jaums{{\it J. Australian Math. Soc.}}
\def\pspm{{\it Proc. Symp. Pure. Math.}}
\def\ast{{\it Ast\'eriques}}
\def\pamq{{\it Pure Appl. Math. Q.}}
\def\nagoya{{\it Nagoya Math. J.}}
\def\forum{{\it Forum Math. }}

\def\tp{{to appear}}

\newcommand{\princeton}[1]{Ann. Math. Studies #1, Princeton
  Univ. Press}

\newcommand{\LNM}[1]{Lecture Notes in Math., vol. #1, Springer-Verlag}

\bibitem{bayer-fluckiger-fainsilber} E.~Bayer-Fluckiger and
  L.~Fainsilber, Non-unimodular Hermitian
  forms. \invent~{\bf 123} (1996), no. 2, 233--240. 



\bibitem{curtis-reiner:1}
 C.~W.~Curtis and I.~Reiner, {\it  Methods of representation
 theory. Vol. I. With applications to finite groups and orders}. 
 Pure and Applied Mathematics. A Wiley-Interscience Publication. John
 Wiley \& Sons, Inc., New York, 1981, 819 pp.  




\bibitem{fainsiber-morales}
 L.~Fainsilber and J.~Morales, An injectivity result for Hermitian
 forms over local orders. {\it Illinois J. Math.}~{\bf 43} (1999), no. 2,
 391--402.  




\bibitem{ibukiyama-katsura} T. Ibukiyama and T. Katsura, On
  the field of definition of superspecial polarized abelian varieties
  and type numbers. \compos~{\bf 91} (1994), 37--46.   

\bibitem{ibukiyama-katsura-oort} T. Ibukiyama, T. Katsura and F. Oort,
  Supersingular curves of genus two and class numbers.
  \compos~{\bf 57} (1986), 127--152.

\bibitem{jacobowitz:hermitian} R.~Jacobowitz, Hermitian forms over
   local fields. \ajm~{\bf 84} (1962), 441--465. 



\bibitem{kmrt} M.-A. Knus, A. Merkurjev, M. Rost, and J.-P. Tignol,
  The book of involutions. American Mathematical Society, Colloquium
  Publications, 44. {\it Amer. Math. Soc.}, 1998, 593 pp.  


\bibitem{mumford:av} D. Mumford, {\it Abelian Varieties.} Oxford
   University Press, 1974.




\bibitem{platonov-rapinchuk:agnt} V.~Platonov and A.~Rapinchuk,
Algebraic groups and number theory.  
{\it Pure and Applied Mathematics, 139.} 
Academic Press, Inc., Boston, MA, 1994. 

\bibitem{reiner:mo} I.~Reiner, {\it Maximal orders}. 
  London Mathematical Society
  Monographs, No.~{\bf 5}.~Academic Press, London-New York, 1975. 395 pp. 

\bibitem{riehm:hermitian} C.~Riehm, Hermitian forms over local
  hereditary orders. \ajm~{\bf 106} (1984), no. 4, 781--800.

\bibitem{scharlau:hf} W.~Scharlau, {\it Quadratic and Hermitian
  forms}. Grundlehren der Mathematischen Wissenschaften 
  {\bf 270}. Springer-Verlag, Berlin, 1985. 

\bibitem{shimura:hermitian2008}
 G.~Shimura, Arithmetic of Hermitian forms. \docmath~{\bf 13} (2008),
 739--774.  


\bibitem{tate:ht} J. Tate, Classes d'isogenie de vari\'et\'es
  ab\'eliennes sur un corps fini (d'apr\`es T. Honda). \sb~{352}
  (1968/69). \LNM{179}, 1971.

\bibitem{tate:eav} J. Tate, Endomorphisms of abelian varieties over
  finite fields. \invent~{\bf 2} (1966), 134--144.


\bibitem{waterhouse:thesis} W.~C.~Waterhouse, Abelian
  varieties over finite fields. \ecole4~{\bf 2} (1969), 521--560.  









\bibitem{yu:sp_prime}
  C.-F. Yu, Superspecial abelian varieties over finite prime
  fields. arXiv:1004.0120, 12 pp. 




\end{thebibliography}
\end{document}